\theoremstyle{definition}  %
\newtheorem{lemma}{Lemma}
\newtheorem{observation}{Observation}
\newtheorem{proposition}{Proposition}
\theoremstyle{plain}
\newtheorem{theorem}{Theorem}
\newtheorem{definition}{Definition}
\xpatchcmd{\proof}{\itshape}{\normalfont\proofnameformat}{}{}
\newcommand{\proofnameformat}{\bfseries}
\newcommand{\pref}[1]{\prettyref{#1}}
\newcommand{\pfref}[1]{Proof of \prettyref{#1}}
\newcommand{\savehyperref}[2]{\texorpdfstring{\hyperref[#1]{#2}}{#2}}
\definecolor{bluish}{rgb}{.05,0.3,0.8}
\newcommand{\R}{\mathbb{R}}
\newcommand{\E}{\mathbb{E}}
\newcommand{\N}{\mathbb{N}}
\newcommand{\grad}{\nabla}
\newcommand{\defeq}{\coloneqq}
\newcommand{\eqdef}{\eqqcolon}
\newcommand{\ldef}{\defeq}
\newcommand{\rdef}{\eqdef}
\newcommand{\half}{\frac{1}{2}}
\newcommand{\quarter}{\frac{1}{4}}
\DeclarePairedDelimiter{\abs}{\lvert}{\rvert} %
\DeclarePairedDelimiter{\brk}{[}{]}
\DeclarePairedDelimiter{\crl}{\{}{\}}
\DeclarePairedDelimiter{\prn}{(}{)}
\DeclarePairedDelimiter{\nrm}{\|}{\|}
\DeclarePairedDelimiter{\norm}{\|}{\|}
\DeclarePairedDelimiter{\tri}{\langle}{\rangle}
\DeclarePairedDelimiter{\ceil}{\lceil}{\rceil}
\DeclarePairedDelimiter{\floor}{\lfloor}{\rfloor}
\let\Pr\undefined
\let\P\undefined
\DeclareMathOperator{\En}{\mathbb{E}}
\DeclareMathOperator{\P}{\mathbb{P}}
\DeclareMathOperator{\projop}{P}
\DeclareMathOperator{\projopP}{\Pi}
\DeclareMathOperator*{\argmin}{arg\,min} %
\newcommand{\mc}[1]{\mathcal{#1}}
\newcommand{\wt}[1]{\widetilde{#1}}
\newcommand{\wh}[1]{\widehat{#1}}
\def\ddefloop#1{\ifx\ddefloop#1\else\ddef{#1}\expandafter\ddefloop\fi}
\def\ddef#1{\expandafter\def\csname 
bb#1\endcsname{\ensuremath{\mathbb{#1}}}}
\def\ddefloop#1{\ifx\ddefloop#1\else\ddef{#1}\expandafter\ddefloop\fi}
\def\ddef#1{\expandafter\def\csname 
b#1\endcsname{\ensuremath{\mathbf{#1}}}}
\def\ddef#1{\expandafter\def\csname 
c#1\endcsname{\ensuremath{\mathcal{#1}}}}
\def\ddef#1{\expandafter\def\csname 
h#1\endcsname{\ensuremath{\widehat{#1}}}}
\def\ddef#1{\expandafter\def\csname 
hc#1\endcsname{\ensuremath{\widehat{\mathcal{#1}}}}}
\def\ddef#1{\expandafter\def\csname 
t#1\endcsname{\ensuremath{\widetilde{#1}}}}
\def\ddef#1{\expandafter\def\csname 
tc#1\endcsname{\ensuremath{\widetilde{\mathcal{#1}}}}}
\newcommand{\ls}{\ell}
\newcommand{\indicator}[1]{\mathbbm{1}{\crl{#1}}}    %
\newcommand{\pmo}{\crl*{\pm{}1}}
\newcommand{\eps}{\epsilon}
\newcommand{\veps}{\eps}
\newcommand{\vsigma}{\varsigma}
\newsavebox\CBox
\newcommand{\DeltaF}{\Delta}
\newcommand{\LipGrad}{L}
\newcommand{\LipGradBar}{\bar{L}}
\newcommand{\lip}[1]{\ell_{#1}}
\newcommand{\lipBar}[1]{\bar{\ell}_{#1}}
\newcommand{\Fclass}{\mathcal{F}(\DeltaF, \LipGrad)}
\newcommand{\OclassPop}{\cO(K,\sigma^{2})}
\newcommand{\OclassMSS}{\cO(K,\sigma^{2},\LipGradBar)}
\newcommand{\Pr}{P_r}
\newcommand{\zset}{\mc{Z}}
\newcommand{\rset}{\mc{R}}
\newcommand{\op}{\mathrm{op}}
\newcommand{\opnorm}[1]{\norm{#1}_{\rm op}}
\newcommand{\openright}[2]{\left[{#1}, {#2}\right)}
\newcommand{\alg}{\mathsf{A}}
\newcommand{\AlgZR}{\mathcal{A}_{\textnormal{\textsf{zr}}}}
\newcommand{\AlgRand}{\mathcal{A}_{\textnormal{\textsf{rand}}}}
\newcommand{\AlgClassRand}{\AlgRand(K)}
\newcommand{\AlgClassZR}{\mathcal{A}_{\textnormal{\textsf{zr}}}(K)}
\newcommand{\minimaxZRMSS}{\bar{\mathfrak{m}}_{\epsilon}^{\mathsf{zr}}(K,\Delta,\bar{L},\sigma^{2})}
\newcommand{\minimaxRand}{\mathfrak{m}_{\epsilon}^{\mathsf{rand}}(K,\Delta,L,\sigma^{2})}
\newcommand{\minimaxRandMSS}{\bar{\mathfrak{m}}_{\epsilon}^{\mathsf{rand}}(K,\Delta,\bar{L},\sigma^{2})}
\newcommand{\threshfunc}{\Gamma}
\newcommand{\softindfunc}{\Theta}
\newcommand{\noisingfunc}{\nu}
\newcommand{\Ortho}{\mathsf{Ortho}}
\newcommand{\unscaled}[1]{#1}
\newcommand{\Funscaled}{\unscaled{F}_T}
\newcommand{\Frand}{\tilde{F}_{T,U}}
\newcommand{\Frandcomp}{\wh{F}_{T,U}}
\newcommand{\Fscaled}{F^{\star}_T}
\newcommand{\Ffinal}{F^{\star}_{T,U}}
\newcommand{\gunscaled}{\bar{g}_T}
\newcommand{\gscaled}{g^{\star}_T}
\newcommand{\grand}{\tilde{g}_{T,U}}
\newcommand{\grandcomp}{\wh{g}_{T,U}}
\newcommand{\gfinal}{g^{\star}_{T,U}}
\newcommand{\funscaled}{\unscaled{f}_T}
\newcommand{\oracle}{\mathsf{O}}
\newcommand{\oracleF}{\mathsf{O}_F}
\newcommand{\gunscaledBasic}{g_T}
\newcommand{\minimaxZR}{\mathfrak{m}_{\epsilon}^{\mathsf{zr}}(K,\Delta,L,\sigma^{2})}
\newcommand{\Otilde}{\wt{O}}
\newcommand{\trn}{\top}
\newcommand{\ind}[1]{^{(#1)}}
\newcommand{\sgn}{\mathrm{sgn}}
\DeclareMathOperator*{\support}{\mathrm{support}}
\newcommand{\prog}[1][\frac{1}{4}]{\mathrm{prog}_{#1}}
\newcommand{\spn}{\mathrm{span}}
\definecolor{innerboxcolor}{rgb}{.9,.95,1}
\definecolor{outerlinecolor}{rgb}{.6,0,.2}
\newcommand{\pop}{bounded variance\xspace}
\newcommand{\Pop}{Bounded variance\xspace}
\newcommand{\mss}{mean-squared smooth\xspace}
\title{{\huge Lower Bounds for Non-Convex Stochastic Optimization}}
\author
{
	Yossi Arjevani\\
	The Hebrew University\\
	{\small\texttt{yossi.arjevani@gmail.com}}\\
	\and
	Yair Carmon\\
	Tel Aviv University\\
	{\small\texttt{ycarmon@tauex.tau.ac.il}}\\
	\and
	John C.\ Duchi\\
	Stanford University\\
	{\small\texttt{jduchi@stanford.edu}}\\
	\and
	Dylan J.\ Foster\\
	Microsoft Research\\
	{\small\texttt{dylanfoster@microsoft.com}}\\
	\and
	Nathan Srebro\\
	TTIC\\
	{\small\texttt{nati@ttic.edu}}\\
	\and
	Blake Woodworth\\
	Inria\\
	{\small\texttt{blake.woodworth@inria.fr}}\\
}
\date{}
\begin{document}
\maketitle

\begin{abstract}
We lower bound the complexity of finding $\epsilon$-stationary points 
(with gradient norm at most $\epsilon$) using stochastic first-order 
methods. In a well-studied model where algorithms access smooth, 
potentially non-convex functions through queries to an unbiased 
stochastic gradient oracle with bounded variance, we prove that (in the 
worst case) any algorithm requires at least $\epsilon^{-4}$ queries to find 
an $\epsilon$-stationary point. The lower bound is tight, and establishes 
that stochastic gradient descent is minimax optimal in this model. In a 
more restrictive model where the noisy gradient estimates satisfy a 
mean-squared smoothness property, we prove a lower bound of 
$\epsilon^{-3}$ queries, establishing the optimality of recently proposed 
variance reduction techniques.
\end{abstract}

\section{Introduction}

Stochastic gradient methods---especially variants of stochastic 
gradient descent (SGD)---are the workhorse of modern machine learning 
and data-driven optimization
\citep{bottou2008tradeoffs,bottou2018optimization} more broadly. 
Much of the success of these methods stems from their broad applicability: 
any 
problem that admits an unbiased gradient estimator is fair
game. Consequently, there is 
considerable interest in understanding the fundamental performance limits 
of methods using stochastic gradients across broad problem classes. For  
\emph{convex} problems, a long line of work
\citep{nemirovski1983problem,nesterov2004introductory,
  agarwal2012information, woodworth2016tight}
sheds lights on these 
limits, and they are by now well-understood. However, many 
problems of interest (e.g., neural network training) are not
convex. This has led to intense development of improved methods for
non-convex stochastic optimization, but little is known about the
optimality of these methods. In 
this paper, we establish new fundamental limits for stochastic first-order 
methods in the non-convex setting.
In general non-convex optimization, it is intractable to find 
approximate global minima \citep{nemirovski1983problem}
or 
even to test if a point is a local minimum or a high-order saddle point \citep{murty1987some}.
As an alternative measure of optimization convergence, we 
consider $\epsilon$-approximate stationarity. That is, given differentiable 
$F:\bbR^{d}\to\bbR$, our goal is to find a point
$x\in\bbR^{d}$ with
\begin{equation}
\nrm*{\grad{}F(x)}\leq{}\veps.\label{eq:stationary}
\end{equation}
The use of stationarity as a convergence criterion dates back to the early 
days of nonlinear optimization 
\citep[cf.][]{vavasis1993black,nocedal2006numerical}.
Recent years have seen rapid development of a body of work that
studies non-convex optimization through the lens of non-asymptotic
convergence rates to $\epsilon$-stationary points
\citep{nesterov2006cubic,ghadimi2013stochastic,carmon2017convex,  
lei2017non,fang2018spider,zhou2020stochastic,fang2019sharp}. 
Another growing body of work motivates this study by identifying 
sub-classes of non-convex problems for which all stationary (or 
second-order stationary) points are globally optimal
\citep{ge2015escaping, ge2016matrix, sun2018geometric, 
mc2019implicit}. 

We prove our lower bounds in an oracle 
model~\citep{nemirovski1983problem,traub1988information}, where 
algorithms access the function $F$ through a \emph{stochastic first-order 
oracle} consisting of a gradient estimator 
$g:\R^d\times\zset\to\R^d$ and distribution $P_z$ on $\zset$ satisfying
\begin{equation}
  \label{eq:g_oracle}
\En_{z}\brk*{g(x,z)}=\grad{}F(x),\quad\text{and}\quad 
\En_{z}\nrm*{g(x,z)-\grad{}F(x)}^{2}\leq{}\sigma^{2}.
\end{equation}
At the $t$th optimization step, the algorithm queries at a
point $x\ind{t}$, the oracle draws  $z\ind{t}\sim{}P_z$, and the
algorithm observes the noisy gradient estimate $g(x\ind{t},z\ind{t})$. 
We make the standard assumption that  
the objective $F$ has bounded initial 
subobtimality
and Lipschitz gradient:
\begin{equation}
  \label{eq:basic_assumptions}
F(x\ind{0})-\inf_{x}F(x)\leq{}\Delta~~~\text{and}~~~
\nrm*{\grad{}F(x)-\grad{}F(y)}\leq{}L\cdot\nrm*{x-y}~~
\forall{}x,y\in\R^d.
\end{equation}
Following common practice, we refer to functions $F$ with 
$L$-Lipschitz gradients as ``$L$-smooth.''

For problem instances $(F,g)$ satisfying~\pref{eq:g_oracle}
and~\pref{eq:basic_assumptions}, given a tolerance $\epsilon$, SGD finds a
point $x$ such that $\E \norm{\grad F(x)} \le \epsilon$ using $O\prn{\Delta
  L \epsilon^{-2}\prn{1 +\sigma^2 \epsilon^{-2}}}$ oracle
queries~\citep{ghadimi2013stochastic}. In the typical regime, and the one we
focus on here, $\epsilon \le \sigma$ so the complexity reduces to
$O\prn{\Delta L \sigma^2 \epsilon^{-4}}$.  
The literature on variance reduction for finding stationary 
points~\citep{lei2017non,fang2018spider,zhou2020stochastic} considers  
the following additional assumptions:
\begin{enumerate}[leftmargin=*]
\item The stochastic gradient $g$ satisfies a
  \emph{mean-squared smoothness} property
  \begin{equation}
    \label{eq:mss}
    \En_{z}\nrm*{g(x,z)-g(y,z)}^{2}\leq{}\LipGradBar^2\cdot\nrm*{x-y}^{2}
    \quad\forall{}x,y\in\bbR^{d}.
  \end{equation}
\item The algorithm is allowed $K$ \emph{simultaneous queries}: at step $t$,
  the algorithm queries $x\ind{t,1},\ldots,x\ind{t,K}$ and observes
  $g(x\ind{t,1},z\ind{t}),\ldots,g(x\ind{t,K},z\ind{t})$, where the random 
  seed $z\ind{t}\sim{}P_z$ is shared. 
\end{enumerate}
Under the mean-squared smoothness assumption and using $K=2$
simultaneous queries the SPIDER~\citep{fang2018spider} and 
SNVRG~\citep{zhou2020stochastic} %
algorithms find a point $x$ such that 
$\E \norm{\grad 
	F(x)} \le 
\epsilon$ using $O(\Delta{}\LipGradBar\sigma\veps^{-3} + \sigma^{2}\veps^{-2})$ oracle 
queries.
This improvement over the $\epsilon^{-4}$ rate of SGD raises natural  
questions. 
 Can we improve this rate further? 
Alternatively, 
can 
we improve the rate of SGD without the additional 
assumption~\pref{eq:mss}? We settle both questions in
the negative.

\subsection{Contributions}

We prove lower bounds for finding stationary points in the 
stochastic first-order oracle model. Our main
result is \pref{thm:main_randomized}, which states:
\begin{enumerate}[leftmargin=*]
\item There exists a distribution over instances $(F,g)$ satisfying 
assumptions~\pref{eq:g_oracle} and~\pref{eq:basic_assumptions} under 
which every randomized algorithm requires at least $c \cdot \Delta L 
\sigma^2\epsilon^{-4}$ oracle queries to find $x$ satisfying $\E 
\norm{\grad F(x)} \le \epsilon$, where $c>0$ is a universal constant and 
where the expectation is taken over the randomness in both the oracle and the algorithm.

\item When $g$ also satisfies the mean-squared smoothness 
property~\pref{eq:mss}, every randomized algorithm requires 
$c\cdot\prn*{\Delta{}\bar{L}\sigma\veps^{-3} + \sigma^{2}\veps^{-2}}$ 
oracle queries. 

\end{enumerate}
Both lower bounds hold for any number $K$ of simultaneous queries, with 
the dimension $d$ of the hard instance depending polynomially on $K$
and $\epsilon^{-1}$ (see expressions for $d$ in \pref{sec:approach} below).

Our lower bounds continue to 
hold when the oracle is subject to  more stringent assumptions. In 
particular, we 
show that gradient estimators of the form $g(x,z)=\grad_x f(x,z)$ give rise 
to 
the same lower bounds; these gradient estimators arise in statistical 
learning problems such as empirical risk minimization. Furthermore, our 
results extend to \emph{active} oracles where the algorithm may choose the 
seed $z$.
This setting includes the special case of finite 
sum minimization, where $F(x)=\frac{1}{n}\sum_{i=1}^n f_i(x)$, each oracle 
query consists of point $x$ and index $i$, and the oracle response is  
$\grad f_i(x)$.

The main implications of our results are 
as follows.
\begin{itemize}[leftmargin=\parindent]
\item \textbf{Optimality of SGD and recent variance-reduction schemes.}  Our
  $\epsilon^{-4}$ lower bound matches (up to a numerical constant) the rate
  of convergence of SGD~\citep{ghadimi2013stochastic} under
  assumptions~\pref{eq:g_oracle} and~\pref{eq:basic_assumptions}, thereby
  characterizing the optimal complexity and proving that SGD attains
  it. Similarly, under the additional assumption~\pref{eq:mss} our
  $\epsilon^{-3}$ lower bound matches the rates of~\citet{fang2018spider}
  and \citet{zhou2020stochastic}, thereby proving their optimality.
  
\item \textbf{Separation between smoothness assumptions.}  Our results
  highlight that the mean-squared smoothness assumption~\pref{eq:mss} is
  critical for variance reduction: we prove that in its absence, any scheme
  will require a number of queries that scales as $\epsilon^{-4}$ at least.
  These results are salient, as this assumption appears in numerous recent 
  works on non-convex 
  optimization~\citep{fang2018spider,zhou2020stochastic,
    zhou2019lower,fang2019sharp}.

\item \textbf{Separation between convex and non-convex stochastic 
  optimization.} \citet{foster2019complexity} show that for 
  \emph{convex} functions satisfying assumptions~\pref{eq:g_oracle} 
  and \pref{eq:basic_assumptions}, the optimal rate for finding 
  $\veps$-stationary points is 
  $\widetilde{\Theta}\prn{\sqrt{\Delta{}L\veps^{-2}} + 
    \sigma^{2}\veps^{-2}}$. 
  Our $\Omega(\Delta L \sigma^2 \epsilon^{-4})$ lower bound thus 
  implies a gap between the convex
  and non-convex setting that scales as $\veps^{-2}$. Conceptually,
  both rates admit a simple interpretation. The convex complexity is 
  the sum of the noiseless convex optimization complexity
  $\sqrt{\Delta{}L\veps^{-2}}$~\citep{carmon2019lower_ii} and the
  estimation complexity $\sigma^{2}\veps^{-2}$. In contrast, in the 
  non-convex case the noiseless complexity $\Delta 
  L\epsilon^{-2}$~\citep{carmon2019lower_i} and the estimation 
  complexity 
  $\sigma^2\epsilon^{-2}$ \emph{multiply} rather than add. This 
  observation underpins our proofs.
\end{itemize}

\subsection{Our approach}\label{sec:approach}
We build on the noiseless lower bound construction 
of~\citet{carmon2019lower_i}, itself inspired by Nesterov's notion of a
chain-like function~\cite{nesterov2004introductory}. The key 
technique is to construct a function such that any noiseless oracle query 
reveals the index of at most a single ``relevant'' coordinate; the lower 
bound follows from the fact that any 
$\epsilon$-stationary point is non-zero in $\Omega(L \Delta 
\epsilon^{-2})$ relevant coordinates. We amplify this lower bound by 
designing a noisy oracle that reveals a relevant coordinate only with low 
probability $p=\Theta(\epsilon^2/\sigma^2)$. This increases the number 
of required queries by a factor proportional to 
$1/p=\Theta(\sigma^2\epsilon^{-2})$, giving our $\epsilon^{-4}$ lower 
bound. The main challenge lies in making sure that the oracle is not too noisy,
in the sense that the variance requirement~\pref{eq:g_oracle} is met. To do so, we focus all of the noise 
 on the single new coordinate $i_x$ that the query $x$ would 
discover next
via the noiseless gradient. More specifically, we let 
$z\sim\mathrm{Bernoulli}(p)$, and set $g_{i_x}(x,0)=0$ and $g_{i_x}(x,1)$ to 
be such that $g$ is unbiased. By careful 
analysis of the noiseless construction of~\cite{carmon2019lower_i} we 
show 
that the variance bound holds and we obtain our lower bound. 

Proving the $\epsilon^{-3}$ lower bound requires additional nuance, as 
the ``incoming coordinate'' index $i_x$ is not continuous in $x$, and so 
the gradient estimator above does not satisfy the mean-square smoothness 
requirement~\pref{eq:mss}. Leveraging the special structure of the 
noiseless construction once more, we design a continuous 
surrogate for $i_x$, and arrive at a mean-square smooth 
construction for which $g_{i_x}(x,z)$ is again non-zero only with 
probability 
$p$. Scaling this construction such that $L = \Theta(\LipGradBar \epsilon / \sigma)$ yields the 
$\epsilon^{-3}$ lower bound.

For ease of exposition, we first carry out our proof strategy for the 
sub-class of ``zero-respecting'' algorithms, whose queries are non-zero 
only in coordinates where previous oracle responses were not zero. We then 
lift our results to
the class of all randomized algorithms using the method of random
rotations \citep{woodworth2017lower,carmon2019lower_i}. On a high level, 
we argue that in a random 
coordinate system, any algorithm operating on our constructions is 
essentially zero-respecting.

Our lower bound constructions are high-dimensional. For zero-respecting 
algorithms, the dimension we require is exactly the number of relevant 
coordinates: $d_{\mathsf{zr}}=\Theta(\Delta L \epsilon^{-2})$ for the 
bounded variance 
case and $d_{\mathsf{zr}}=\Theta(\Delta \LipGradBar \sigma^{-1} 
\epsilon^{-1})$ for the 
mean-square smooth case. To handle general, potentially randomized algorithms
that allow $K$ simultaneous oracle queries for every random realization
$z \sim P_z$,
we 
add many irrelevant coordinates, and our 
proof requires dimension $\Otilde(K d_{\mathsf{zr}}^2 / p)$, 
where $p=\Theta(\epsilon^2/\sigma^2)$ is the progress 
probability. Lower 
bound 
constructions with dimension that scales polynomially in $\epsilon^{-1}$ are 
common~\citep{nemirovski1983problem, nesterov2004introductory,
  woodworth2016tight, foster2019complexity}, and natural
for algorithms that (nominally) work in arbitrary Hilbert spaces.
In the noiseless setting, obtaining tight and algorithm-independent lower 
bounds on dimension-independent convergence rates necessitates 
high-dimensional constructions; see \citet[Section 
1.2]{carmon2019lower_i} for additional discussion. Since the noiseless 
setting is a special case of our noisy setting, it seems likely that here too 
high-dimensional constructions are to some extent unavoidable.

\subsection{Related work}
Lower bounds for first-order convex optimization in the noiseless setting 
are well-studied \citep{nemirovski1983problem, 
nesterov2004introductory}. For $L$-smooth functions in the 
high-dimensional regime, it is well-known that 
$\Theta\prn[\big]{\sqrt{D^{2}L\eps^{-1}}}$ gradient evaluations are 
necessary and sufficient to find an $\eps$-suboptimal point given 
$x\ind{0}$ with $\nrm*{x\ind{0}-x^{\star}}\leq{}D$;
Nesterov's accelerated gradient method~\cite{nesterov1983method}
achieves this rate.

For smooth high-dimensional non-convex
optimization in the noiseless setting, \citet{carmon2019lower_i}
establish that $\Theta(\Delta{}L\eps^{-2})$ gradient evaluations are
necessary and sufficient for finding $\eps$-stationary points; this rate is achieved by 
gradient descent. An earlier line of
work develops lower bounds for finding stationary points of non-convex
functions in the low-dimensional regime where $d$ is constant, but they obtain 
either weaker lower bounds \citep{vavasis1993black} 
or tight bounds that hold only for specific algorithm classes
\citep{cartis2010complexity, cartis2012complexity, cartis2012much, 
cartis2017worst}.

A long line of work on lower bounds for stochastic convex optimization
traces back to Nemirovski and Yudin's seminal information-based
complexity~\cite{nemirovski1983problem}.  Extensions since then have allowed
sharp dimension-dependent bounds via reductions to statistical estimation
problems \citep{raginsky2011information,agarwal2012information}, as well as
extension to structured problems common in machine learning, such as finite
sums, by restrictions on the form of the update
rules~\cite{arjevani2016dimension} and high-dimensional constructions
\citep{woodworth2016tight,foster2019complexity}. Our technique for proving
stochastic lower bounds differs qualitatively from these methods in that we
preserve the sequential hardness of the noiseless non-convex lower bound
construction of \cite{carmon2019lower_i}, and use the noise in the
stochastic setting to amplify the hardness of this construction.

For non-convex stochastic optimization, few lower bounds are known. 
\citet{drori2019complexity} recently showed that SGD itself 
cannot
obtain a rate better than $\eps^{-4}$ for finding $\eps$-stationary points, even for convex functions. This is an 
algorithm-specific result, whereas we show that \emph{no algorithm} can 
improve over this rate. For finite sum problems where 
$F(x)=\frac{1}{n}\sum_{i=1}^{n}f_i(x)$, \citet{fang2018spider}
show that $\Omega\prn*{\Delta{}\bar{L}\veps^{-2}\sqrt{n}}$ stochastic
gradient queries are required to find a $\veps$-stationary point; SPIDER and 
SNVRG \citep{fang2018spider,zhou2020stochastic} have matching upper 
bounds. This
lower bound is incomparable to ours: the stochastic gradient
construction in the paper~\cite{fang2018spider} has unbounded variance, 
so it cannot imply results along the lines of 
\pref{thm:main_randomized}. Indeed,
\citet{fang2018spider} leave obtaining the $\veps^{-3}$ lower bound 
we provide in \pref{thm:main_randomized} as an open problem.

We now turn to upper bounds for 
finding stationary points in the stochastic setting.  
In the \emph{convex} 
setting (where achieving approximate global optimality is possible and 
hence usually the goal) \citet{allen2018how} proposes algorithms with 
rates for finding stationary points improving over SGD, and  
\citet{foster2019complexity} give
improvements on these bounds and establish their optimality.
For the non-convex setting, \citet{ghadimi2013stochastic} 
establish an $O(\Delta{}L\sigma^{2}\veps^{-4})$ upper bound for SGD, 
and a 
large body of recent work attempts to improve this rate. These attempts 
roughly divide into two categories: \emph{variance reduction} and \emph{high-order 
information}. 

Works in the variance reduction category make either the mean-squared 
smoothness  assumption~\pref{eq:mss} or a stronger variant wherein every 
$g(\cdot, z)$ is $\LipGradBar$-Lipschitz. The earliest results consider 
only the finite sum setting, and establish improved dependence on the 
number of summands~\citep{allen2016variance, reddi2016stochastic}. 
Under the bounded variance assumption~\pref{eq:g_oracle},  
\citet{lei2017non} obtain a rate of $\eps^{-10/3}$, demonstrating that 
in the non-convex setting variance reduction provides benefits beyond 
finite sum optimization. Subsequent algorithms by
\citet{fang2018spider} and \citet{zhou2020stochastic} 
obtain an improved rate of $\eps^{-3}$, which we
prove is optimal. Recent 
work~\citep{wang2018spiderboost,cutkosky2019momentum} offers further 
refinements of these algorithms that also obtain the $\eps^{-3}$ rate.

Smoothness in higher derivatives, such as Lipschitz
continuity of the Hessian, allows additional
possibilities~\citep{xu2018first,allen2018natasha,allen2018neon2,
  fang2018spider}.
\citet{tripuraneni2018stochastic} provide a sub-sampled cubic 
regularization method that uses
stochastic Hessian-vector products and attains a rate of 
$\eps^{-3.5}$ without relying on mean-squared smoothness~\pref{eq:mss} or
simultaneous gradient queries.
\citet{fang2019sharp} show that it is possible to 
obtain the rate  $\eps^{-3.5}$ using SGD with perturbed gradients and 
restarts without the need for Hessian-vector products. Most works that 
assume Lipschitz Hessian also provide guarantees for finding 
second-order stationary points. 

\subsection{Organization}
\pref{sec:prelim} introduces the formal oracle model in which we prove our 
lower bounds. In \pref{sec:zero_respecting}, we prove our results for the 
subclass of zero-respecting algorithms. In 
\pref{sec:randomized} we apply random rotations to prove lower bounds for 
all randomized algorithms, leading to our main result. 
\pref{sec:extensions} describes the extensions of our results to statistical 
learning and active oracles, and 
\pref{sec:discussion} concludes with discussion of some remaining open 
problems.

\paragraph{Notation} For a vector $x\in\bbR^{d}$, we let $\support(x) 
\defeq \crl{i \mid x_i \ne 0}$
and $x_{\geq{}i} \defeq \prn*{x_i,\ldots,x_d}\in\bbR^{d-i+1}$. 
For $\alpha\in[0,1)$ we define the ``progress'' of $x$ as  $\prog[\alpha](x) 
\defeq \max\crl*{i\ge 0 \mid \abs{x_i} > 
	\alpha}$, where we assume $x_0 \equiv 1$.
For a
differentiable function $f$, we adopt the convention
$\brk{\grad{}f(x)}_i=\grad_{i}f(x)=\frac{\partial}{\partial{}x_i}f(x)$. When
$f$ is twice-differentiable, we likewise define
$\brk{\grad^{2}f(x)}_{ij}=\grad^{2}_{ij} f(x) 
=\frac{\partial^{2}}{\partial{}x_i\partial{}x_j}f(x)$.
 Throughout, $\nrm*{x}$ denotes the Euclidean norm of $x$ and  
 $\nrm*{x}_{\infty}$  denotes its
 $\ls_{\infty}$ norm. For a matrix 
$A\in\bbR^{d_1\times{}d_2}$,
$\nrm*{A}_{\op}$  denotes the operator norm.
Given functions $f,g:\cX\to \openright{0}{\infty}$ where $\cX$ is any set,
we use non-asymptotic big-$O$ notation: $f=O(g)$ if there exists a numerical
constant $c < \infty$ such that $f(x)\leq{}c\cdot{}g(x)$ for all $x\in\cX$
and $f=\Omega(g)$ if there is a numerical constant $c>0$ such that
$f(x)\geq{}c\cdot{}g(x)$. We write $f = \Otilde(g)$ as shorthand for $f=O(g
\max\{1,\log g\})$.

\section{Setup}
\label{sec:prelim}
We study the stochastic optimization problem of finding an 
$\epsilon$-stationary point through the well-known framework of
 oracle complexity
\citep{nemirovski1983problem}, which we set up formally in this section.

\paragraph{Function class}
We develop lower bounds for algorithms
that find stationary points of functions in the set
\begin{equation*}
  \cF(\Delta,L) \defeq
  \left\{F:\bbR^{d}\to\bbR
  ~~\mbox{s.t.}~~
  F(0)-\inf_{x}F(x)\leq{}\Delta,
  ~
  \nrm*{\grad{}F(x)-\grad{}F(y)}\leq{}L\nrm*{x-y}
  ~\mbox{for all}~ x,y\right\}.
\end{equation*}
We state explicitly the value of the dimension $d$ required for each lower 
bound construction; the reader may
otherwise regard $d$ as a free parameter.

\paragraph{Optimization protocol}
We consider algorithms that access an unknown function
$F\in\cF(\Delta,L)$ through a \emph{stochastic first-order oracle}
$\oracle$. Each oracle $\oracle$
consists of a distribution $P_z$ on a 
measurable space $\zset$ and an
unbiased mapping $\oracle_F(x,z)=(F(x),g(x,z))$, meaning
for each $F\in\Fclass$
and $x$, if $z \sim P_z$ then $\En\brk*{g(x,z)}=\grad{}F(x)$.
We consider a protocol in which algorithms
interact with the oracle through multiple rounds of batch queries. At each round $i$, the algorithm queries a batch
\begin{equation}
  \label{eq:k_query}
  x\ind{i}\ldef\prn*{x\ind{i,1},\ldots,x\ind{i,K}},~ \text{where~~} 
  x\ind{i,k}\in\bbR^{d}~\mbox{and}~k\in\brk*{K}
\end{equation}
of size $K$, and for each batch query
$x\ind{i}$, the oracle $\oracle$
performs an independent draw $z\ind{i}\sim{}P_z$ and responds with 
\[
\oracleF(x\ind{i},z\ind{i}) \ldef{} \prn*{\oracleF(x\ind{i,1},z\ind{i}),\ldots,\oracleF(x\ind{i,K},z\ind{i})}. 
\]
When $K=1$ this is the classical first-order stochastic optimization
framework. By considering larger batches we can subsume variance-reduction
methods
such as SPIDER and SNVRG
\citep{fang2018spider,zhou2020stochastic}, both of which query each
stochastic gradient at $K=2$ points.\footnote{See also the
  $K$-parallel model of \citet{nemirovski1994parallel}.}
Note
that we allow the algorithm to observe the function value $F(x)$
exactly for each query, which is a weaker assumption than typical in lower and upper bounds for stochastic optimization.

\paragraph{Optimization algorithms} An algorithm $\alg$ consists of a 
distribution 
$\Pr$ over a measurable set $\rset$ and a sequence of 
measurable mappings $\crl{\alg\ind{i}}_{i\in\N}$ such that 
$\alg\ind{i}$ takes in the first  $i-1$ oracle 
responses and the random seed $r\in\rset$ to produce the $i$th 
query. 
We let $\crl{x\ind{i}_{\alg\brk{\oracleF}}}_{i\in\N}$ denote the (random) 
sequence of queries resulting from applying algorithm 
$\alg$ with $\oracle$, defined recursively as
\begin{equation}
\label{eq:alg_iteration}
x\ind{i}_{\alg\brk{\oracleF}}= \alg\ind{i}\prn*{r, 
	\oracleF\big(x\ind{1}_{\alg\brk{\oracleF}},z\ind{1}\big), \ldots,
	\oracleF\big(x\ind{i-1}_{\alg\brk{\oracleF}},z\ind{i-1}\big)},
\end{equation}
where $r\sim \Pr$ is drawn a single time at the beginning of the
protocol (this is no loss of generality~\cite{nemirovski1983problem}).
We define $\AlgClassRand$ to be the class of all algorithms
that follow the protocol \pref{eq:alg_iteration} with $K$ batch
queries per round.

\paragraph{Oracle classes}
We consider two natural classes of oracles. For the \emph{bounded variance} class, denoted $\OclassPop$, we require that the stochastic gradient be unbiased and have the bounded
variance property \pref{eq:g_oracle},
but otherwise allow arbitrary $g(x,z)$. This well-studied setting subsumes the standard analysis of stochastic gradient
descent for finding approximate stationary points \citep{ghadimi2013stochastic}. 

The \pop setting places few restrictions on the stochastic gradient function $g(x,z)$, but there are many
applications in which the stochastic gradients may have
additional structure. In the \emph{mean-squared smooth} setting, we
require that in addition to the bounded-variance property \pref{eq:g_oracle}, the stochastic
gradient satisfies the mean-squared smoothness property \pref{eq:mss}.
We use $\OclassMSS$ to
denote the class of all such oracles. By
Jensen's inequality, any function that admits 
an $\LipGradBar$-mean-squared smooth oracle must itself be
$\LipGradBar$-smooth.

Our results also extend to more structured oracles appearing in the
statistical learning and/or finite-sum
settings. We defer the details to \pref{sec:extensions}.

\paragraph{Complexity measures} Our main results are tight
lower bounds on the \emph{distributional 
  complexity}~\cite{yao1977probabilistic,nemirovski1983problem,
  braun2017lower} of finding  
$\eps$-stationary points. Let $\cP\brk{\Fclass}$ be set of all 
distributions over $\Fclass$; the distributional complexity in the \pop 
setting is
\begin{equation}
\minimaxRand
\defeq 
\sup_{\oracle\in\OclassPop}
\sup_{P_F \in \cP\brk{\cF(\Delta,\LipGrad)}}
\inf_{\alg\in\AlgClassRand}
\inf\crl[\bigg]{
	T \in \N
	~
	\bigg\vert
	~
	\E\,\norm[\big]{ \grad F\prn[\big]{
			x\ind{T,1}_{\alg[\oracle_F]} }
	}  \le \epsilon
},\label{eq:minimax_rand}
\end{equation} 
where the expectation is over the sampling of $F$ from $P_F$, the 
randomness in the oracle $\oracle$, and the randomness in the algorithm 
$\alg$, though randomization in $\alg$ does not affect distributional
complexity~\cite{yao1977probabilistic, nemirovski1983problem}.
The distributional complexity for the \mss setting is
\begin{equation}
\minimaxRandMSS
\defeq 
\sup_{\oracle\in\OclassMSS}
\sup_{P_F \in \cP\brk{\cF(\Delta,\LipGradBar)}}
\inf_{\alg\in\AlgClassRand}
\inf\crl[\bigg]{
	T \in \N
	~
	\bigg\vert
	~
	\E\,\norm[\big]{ \grad F\prn[\big]{
			x\ind{T,1}_{\alg[\oracle_F]} }
	}  \le \epsilon
}.\label{eq:minimax_rand_mss}
\end{equation} 
Lower bounds on distributional complexity imply lower bounds on  
minimax complexity~\cite[cf.][]{nemirovski1983problem,braun2017lower}. 
That is, $\minimaxRand 
> 
T$ implies that there 
exists $\oracle\in\OclassPop$  such that for every $\alg\in\AlgClassRand$ 
there exists a function $F\in\Fclass$ for which $\E\,\norm[\big]{ \grad 
F\prn[\big]{ x\ind{T,1}_{\alg[\oracle_F]}}} > \epsilon$, where here the 
expectation is over randomness in $\alg$ and $\oracle$.

\section{Lower bounds for zero-respecting algorithms}
\label{sec:zero_respecting}

Before presenting our results in full generality, we first develop the
key components of our technique by proving lower bounds for a
restricted class of \emph{zero-respecting
  algorithms}~\cite{carmon2019lower_i}. The class of zero-respecting 
  algorithms generalizes the well-known linear
span-assumption \citep[see][Section 2.1.2]{nesterov2004introductory},
and encompasses many standard optimization algorithms. More
importantly, the lower bound instances we introduce in this section
form the core of our lower bounds for general algorithms via a reduction in the next section.

An algorithm $\alg$ is zero-respecting if its queries at
each round have support in the supports of all previous oracle responses:
\begin{definition}
A stochastic first-order algorithm $\alg$ is \emph{zero-respecting} if
for any oracle $\oracle $ and any realization of $z\ind{1},z\ind{2},\ldots$, for all $t\ge1$ and $k\in\brk*{K}$,
\begin{equation}
\label{eq:zero_respecting}
\support\big(x\ind{t,k}_{\alg\brk{\oracleF}}\big) \subseteq
\bigcup_{i < t,k'\in\brk*{K}} 
\support\big(g\ind{i,k'}\big),
\end{equation}
where $\prn[\big]{f\ind{t,1}, g\ind{t,1}},\ldots,\prn[\big]{f\ind{t,K},
  g\ind{t,K}} =\oracleF\big(x\ind{t}_{\alg\brk{\oracleF}},z\ind{t}\big)$
denote the
oracle responses for round $t$.
\end{definition}
We let $\AlgClassZR$ denote the class of all zero-respecting
algorithms. %
Our main result for this section is to establish tight lower bounds on the minimax
oracle complexity for zero-respecting algorithms, which we denote
by $\minimaxZR$ for the \pop setting and $\minimaxZRMSS$ for the \mss
setting; these complexities are as in \pref{eq:minimax_rand}
and \pref{eq:minimax_rand_mss}, with $\AlgClassZR$ replacing 
$\AlgClassRand$. The zero-respecting structure allows us to attain tight 
lower bounds using $P_F$ supported on a single hard function.

\subsection{Probabilistic zero-chains}
At the core of our development is an 
 embedding of the task of finding a stationary point into that of finding 
 a point $x$ with high coordinate \emph{progress}, which we define as
 \begin{equation}
\label{eq:progress}
 \prog[\alpha](x) \defeq \max\crl*{i\ge 0 \mid \abs{x_i} > 
 \alpha}~~\mbox{(where $x_0\equiv 1$)},
 \end{equation}
i.e., $\prog[\alpha](x)$ is the highest index whose entry is $\alpha$-far from zero, for some 
threshold $\alpha\in[0,1)$.
The starting point for our lower bounds is the notion of
a \emph{first-order zero-chain} \citep{carmon2019lower_i}, which is a 
function 
$F$ that 
satisfies $\prog[0](\grad F(x)) \le \prog[0](x)+1$ for all $x$, 
generalizing Nesterov's concept of a ``chain-like'' 
function~\citep{nesterov2004introductory}. In the noiseless case 
($g\ind{i}=\grad F(x\ind{i})$), zero-chains control the rate of progress of 
zero-respecting algorithms: every query can ``discover'' at most one  
coordinate, and therefore $\prog[0](x\ind{i}) < T$ for all $i\le T$.

Our key insight is that in the stochastic setting, noise can amplify progress control: we 
construct stochastic gradient functions for which any zero-respecting
algorithm requires \emph{many} queries in order to activate one coordinate. 
We call such functions \emph{probabilistic zero-chains}.

\begin{definition}
\label{def:zero-chain}
A stochastic gradient function $g(x,z)$ is a \emph{probability-$p$ 
zero-chain} if 
\begin{align}
    \P \bigl(\,\exists{}x: \prog[0]\prn*{g(x,z)} =\prog(x)+1 \bigr) \leq{}
    p,\label{eq:prob-zc}\intertext{and}
    \P \bigl(\,\exists{}x:  \prog[0]\prn*{g(x,z)} >\prog(x)+1 \bigr) =0.
    \label{eq:standard-zc}
\end{align}
\end{definition}
The constant ${1}/{4}$ in \pref{eq:prob-zc} is only used in our
lower bound for general algorithms, and any non-zero constant would
suffice in its place. Even $\prog[0](x)$ is sufficient for
the constructions in this section; we keep $\prog(x)$ in the definition
only for notational consistency. We also note that the 
requirement~\pref{eq:standard-zc} implies that any $F$ for which $g$ is an 
unbiased gradient estimator must itself be a zero-chain.

The next lemma formalizes the idea that any zero-respecting algorithm
interacting with a probabilistic zero-chain requires many rounds to
discover all coordinates.

\begin{lemma}
\label{lem:prob-zero-chain}
Let $g(x,z)$ be a probability-$p$ zero-chain gradient estimator for 
$F:\bbR^{T}\to\bbR$, and let $\oracle$ be any oracle with
$\oracle_{F}(x,z)=(F(x),g(x,z))$. Let $\crl[\big]{x\ind{t,k}_{\alg[\oracle_F]}}$ 
be 
the 
queries of any $\alg\in\AlgZR(K)$ interacting with $\oracle_F$. Then, with 
probability at least $1-\delta$,
	\begin{equation*}
          \max_{k\in\brk*{K}}\prog[0]\prn*{
		x\ind{t,k}_{\alg[\oracle_F]}
	} < T, \quad\text{for all } t \leq{} \frac{T-\log(1/\delta)}{2p}.
	\end{equation*}
\end{lemma}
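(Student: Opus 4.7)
The plan is to dominate the growth of the algorithm's ``discovered support'' by a sum of independent Bernoullis, then apply a standard Chernoff bound.

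\textbf{Step 1: A Bernoulli jump indicator.} For each round $t$, I would define the jump indicator
\[
J_t \defeq \indicator{\,\exists{}x:\prog[0]\prn*{g(x,z\ind{t})}=\prog(x)+1\,}.
\]
By the probabilistic zero-chain property \pref{eq:prob-zc}, $\P(J_t=1)\le{}p$, and since the seeds $z\ind{1},z\ind{2},\ldots$ are i.i.d.\ the $J_t$ are mutually independent Bernoullis. The key point is that this event is quantified over \emph{all} $x$, so it controls what could happen for any query the algorithm might issue based on past randomness.

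\textbf{Step 2: Couple the algorithm's progress to $J_t$.} Let
$M_t \defeq \max_{i\le t,\,k'\in[K]}\prog[0]\prn*{g\ind{i,k'}}$ (with $M_0:=0$) be the largest coordinate revealed through round $t$. Property \pref{eq:standard-zc} gives $\prog[0](g(x,z\ind{t}))\le{}\prog(x)+1$ always, and on $\{J_t=0\}$ strengthens to $\prog[0](g(x,z\ind{t}))\le{}\prog(x)\le{}\prog[0](x)$. Applying this at each $x=x\ind{t,k}$, together with the zero-respecting property $\prog[0]\bigl(x\ind{t,k}_{\alg[\oracleF]}\bigr)\le{}M_{t-1}$, yields the one-step bound
\[
M_t \;\le\; M_{t-1} + J_t,
\]
and therefore $M_t\le\sum_{i=1}^{t}J_i$. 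The same zero-respecting bound shows that $\max_{k}\prog[0]\bigl(x\ind{t,k}_{\alg[\oracleF]}\bigr)\le{}M_{t-1}$ for every $t$, so it suffices to show $M_\tau < T$ for $\tau=\lfloor(T-\log(1/\delta))/(2p)\rfloor$.

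\textbf{Step 3: Chernoff bound.} Using $\E[e^{J_i}]\le{}1-p+ep\le{}e^{(e-1)p}\le{}e^{2p}$ together with independence and Markov's inequality gives
\[
\P\!\left(\textstyle\sum_{i=1}^{\tau}J_i \ge T\right) \;\le\; e^{-T}\,\E\!\left[e^{\sum_i J_i}\right] \;\le\; e^{-T+2\tau p} \;\le\; e^{-T+(T-\log(1/\delta))} \;=\; \delta,
\]
where the last inequality uses $2\tau p\le T-\log(1/\delta)$. Combined with Steps 1--2, this yields the claim.

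The only subtle point is the ``$\exists x$'' quantifier in \pref{def:zero-chain}: although $x\ind{t,k}$ depends on $z\ind{1},\dots,z\ind{t-1}$ in a complicated way, bounding progress uniformly over $x$ decouples this dependence from the fresh randomness $z\ind{t}$, making the $J_t$ genuinely independent. I do not expect any serious obstacle beyond bookkeeping this decoupling carefully.
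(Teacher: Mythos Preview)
Your proof is correct and follows essentially the same approach as the paper's: bound the progress increments by Bernoulli$(p)$ variables and apply the Chernoff method with the same $e^{2p}$ moment bound. The one (nice) difference is that you exploit the ``$\exists x$'' quantifier in \pref{def:zero-chain} to define genuinely i.i.d.\ indicators $J_t$ depending only on $z\ind{t}$, whereas the paper works with the filtration $\cG\ind{t}$ and bounds $\P(\gamma\ind{t}-\gamma\ind{t-1}=1\mid\cG\ind{t-1})\le p$ conditionally; your decoupling is slightly cleaner but the two arguments are otherwise identical.
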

The intuition behind \pref{lem:prob-zero-chain} is that any zero-respecting algorithm must activate coordinates in sequence, and must wait at least $\Omega(1/p)$ rounds between activations on average, leading to a
total waiting time of $\Omega(T/p)$ rounds. The proof below makes this 
intuition formal; note that throughout the proof we use that 
$\prog[\alpha]$ is non-increasing
in $\alpha$. 
\begin{proof}
  For brevity, we omit the subscript $\alg[\oracle_F]$ from 
  $\crl[\big]{x\ind{t,k}_{\alg[\oracle_F]}}$. Recall that $r$ is the algorithm's 
  random seed (Eq.~\pref{eq:alg_iteration}).
  Let $\prn[\big]{f\ind{i,1}, g\ind{i,1}},\ldots,\prn[\big]{f\ind{i,K},
  g\ind{i,K}}$ denote the oracle responses for the $i$th batch query
  $x\ind{i}=(x\ind{i,1},\ldots,x\ind{i,K})$, and let 
  $g\ind{i}=(g\ind{i,1},\ldots,g\ind{i,K})$. 
  Define a filtration 
  \[
  \cG\ind{i} \defeq \sigma(r,x\ind{1},\ldots,x\ind{i},g\ind{1},\ldots,g\ind{i}).
  \] 
  We define two measures of the algorithm's progress:
	\begin{equation*}
	\pi\ind{t} = \max_{i\le t} \max_{k\in\brk*{K}}\prog[0](x\ind{i,k}) = \max
	 \crl*{j\le T\mid 
	x_j\ind{i,k} \ne 0 \text{ for some }i\le t, k\in\brk*{K}},
	\end{equation*}
	and similarly,
	\begin{equation*}
	\gamma\ind{t} = \max_{i\le t} \max_{k\in\brk*{K}}\prog[0](g\ind{i,k}) = \max \crl*{j\le T\mid 
		g_j\ind{i,k} \ne 0 \text{ for some }i\le t, k\in\brk*{K}},
	\end{equation*}
	so $\pi\ind{t} \in \cG\ind{t-1}$ and $\gamma \ind{t} \in  
	\cG\ind{t}$. Note that $\prog[0](x)$ is the largest index in 
	$\support(x)$, so the zero-respecting property implies that
	\begin{equation*}
	\pi\ind{t} \le \gamma \ind{t-1}
	\end{equation*}
	for all $t$, with probability 1, where we let
        $\gamma\ind{0}\equiv 0$. 
	Therefore, it suffices to show that
	\begin{equation}\label{eq:pzc-lem-need-to-show}
	\P(\gamma\ind{t} \ge T) \le \delta~~\mbox{for all}~~t\le 
	\frac{T-\log\frac{1}{\delta}}{2p}.
	\end{equation}

        To show this, first observe that with probability $1$,
	\begin{equation*}
          \max_{k\in\brk*{K}}\prog[0](g\ind{t,k}) \stackrel{(\star)}{\le} 1 + \max_{k\in\brk*{K}}\prog(x\ind{t,k}) \le 1 + 
          \max_{k\in\brk*{K}}\prog[0](x\ind{t,k}) 
	\le 1 + \pi\ind{t} \le 1+\gamma\ind{t-1},
	\end{equation*}
	where inequality $(\star)$ holds by the zero chain 
	property~\pref{eq:standard-zc}, and the other inequalities hold by 
	definition. Since $x\ind{t}\in\cG\ind{t-1}$, we have that 
	$g\ind{t,k}=g(x\ind{t,k},z\ind{t})$ is independent of $x\ind{t}$ given 
	$\cG\ind{t-1}$. Consequently, the 
	zero-chain property~\pref{eq:prob-zc} implies that 
        \[
       \bbP\prn*{ \max_{k\in\brk*{K}}\prog[0](g\ind{t,k}) \le 
	\max_{k\in\brk*{K}}\prog(x\ind{t,k})\le 
	\gamma\ind{t-1}\,\Big\vert\,\cG\ind{t-1}}\geq{}1-p.
        \]
	Since $\gamma\ind{t} = \max\crl{\gamma\ind{t-1}, \max_{k\in\brk*{K}}\prog[0](g\ind{t,k})}$, we 
	conclude that
	\begin{equation}
	\label{eq:iota}%
	\P(\gamma\ind{t} - \gamma\ind{t-1} \notin \{0,1\}\mid \cG\ind{t-1})=0
	~~\mbox{and}~~
	\P(\gamma\ind{t} - \gamma\ind{t-1} = 1 \mid \cG\ind{t-1})\le p.
	\end{equation}
	Therefore, denoting the increment $\iota\ind{t} \defeq \gamma\ind{t} - 
	\gamma\ind{t-1}$, we have via the Chernoff method,
	\begin{equation*}
	\P(\gamma\ind{t} \ge T) = 
	\P(e^{\sum_{i=1}^t \iota\ind{i}} \ge e^T)
	\le e^{-T}\cdot\E {e^{\sum_{i=1}^t \iota\ind{i}} }.
	\end{equation*}
	Using $\iota\ind{t}\in\cG\ind{t}$ and 
	$\P(\iota\ind{t}=1\mid\cG\ind{t-1})=1-\P(\iota\ind{t}=0\mid\cG\ind{t-1})\le
	 p$, we obtain
	\begin{equation*}
	\E {e^{\sum_{i=1}^t \iota\ind{i}} }
	= \E \brk*{\prod_{i=1}^t \E\brk*{ e^{\iota\ind{i}} \;\Big\vert\; \cG\ind{i-1}}}
	\le (1-p+ p\cdot e)^t \le e^{pt\cdot(e-1)} \le e^{2pt}.
	\end{equation*}
	It follows that $\P(\gamma\ind{t} \ge T) \le e^{2pt-T}\le \delta$ for every 
	$t\le \frac{1}{2p}(T+\log \delta)$, giving~\eqref{eq:pzc-lem-need-to-show}.
\end{proof}

\subsection{Lower bound for the \pop setting}
\pref{lem:prob-zero-chain} suggests a natural lower bound strategy:
\begin{enumerate}[leftmargin=\parindent]
\item[i.] Construct a function $F\in\Fclass$
  whose gradients are large for all
$x\in\bbR^T$ with $\prog[0]\prn*{x\ind{i}} < T$.
\item[ii.] Construct $g$, a probability-$p$ zero chain gradient estimator for 
$F$.
\end{enumerate}
Together with \pref{lem:prob-zero-chain}, these steps guarantee  
that any zero-respecting algorithm interacting with $g$ will take 
at least $\Omega(T/p)$ rounds to
make the gradient of $F$ small.
We first execute our strategy for the \pop setting \pref{eq:g_oracle}.

We choose the underlying function $F$ to be the 
construction of \citet{carmon2019lower_i}. For each $T\in\bbN$, we define
\begin{equation}
\Funscaled(x) \defeq -\Psi(1)\Phi(x_1) +
\sum_{i=2}^{T}\brk*{\Psi(-x_{i-1})\Phi(-x_i) -
	\Psi(x_{i-1})\Phi(x_i)},\label{eq:f_unscaled}
\end{equation}
where the component functions $\Psi$ and $\Phi$ are
\begin{equation}
\Psi(t) = \left\{
\begin{array}{ll}
0,\quad&t\leq{}1/2,\\
\exp\prn*{1-\frac{1}{(2t-1)^{2}}},\quad&t>1/2.
\end{array}
\right.\quad\quad\text{and}\quad\quad\Phi(t) =
\sqrt{e}\int_{-\infty}^{t}e^{-\frac{1}{2}\tau^{2}}d\tau.\label{eq:psi_phi}
\end{equation}
The function $\Funscaled$ is a (deterministic)
zero-chain, and has large gradient unless all coordinates are large
$(\prog[1](x)\geq{}T)$.  
We enumerate all the relevant properties of $\Funscaled$ in the following.
\begin{lemma}[\cite{carmon2019lower_i}]
	\label{lem:deterministic-construction} The function
        $\Funscaled$ satisfies:
	\begin{enumerate}
		\item \label{item:val} $\Funscaled(0) -
                  \inf_{x}\Funscaled(x) \leq \Delta_0\cdot T$, where $\Delta_0 = 
                  12$.
		\item \label{item:lip} The gradient of $\Funscaled$ is
		$\lip{1}$-Lipschitz continuous, where $\lip{1}=152$.
		\item \label{item:grad} For all $x\in\bbR^{T}$,
		$\nrm*{\grad{}\Funscaled(x)}_{\infty}\leq{}\gamma_\infty$, where 
		$\gamma_\infty = 23$.
		\item \label{item:zero-chain} For all $x\in\R^T$,
                  $\prog[0]\prn*{\grad\Funscaled(x)}\leq{}\prog[\frac{1}{2}](x)+1$.
		\item \label{item:large-grad} For all $x\in\R^T$, if $\prog[1](x)<T$ 
		then $\nrm*{\grad\Funscaled(x)} \ge 
		|\grad_{\prog[1](x)+1}\Funscaled(x)| > 1$.
	\end{enumerate}
\end{lemma}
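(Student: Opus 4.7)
The plan is to verify each of the five properties by unpacking the definition of $\Funscaled$ in \eqref{eq:f_unscaled} and applying standard univariate estimates on $\Psi$ and $\Phi$, following \citet{carmon2019lower_i}. Before the per-item arguments I would collect the following facts about the component functions: (i) $\Psi$ and all of its derivatives vanish identically on $(-\infty,1/2]$; (ii) $\Psi,\Psi',\Psi''$ are uniformly bounded by absolute constants; (iii) $\Phi,\Phi',\Phi''$ are uniformly bounded, with the explicit pointwise lower bound $\Phi'(x) = \sqrt{e}\,e^{-x^2/2} \geq 1$ whenever $|x|\leq 1$, while $\Psi(1) > 0$ is a fixed positive constant. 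Each is a routine computation from the definitions in \eqref{eq:psi_phi}.

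Items~1--3 are then local estimates that reduce to uniform bounds on single summands. Because each summand $\Psi(\pm x_{i-1})\Phi(\pm x_i)$ couples only two consecutive coordinates and is bounded in absolute value by a constant, we obtain $|\Funscaled(x) - \Funscaled(0)| \leq \Delta_0 T$ immediately by the triangle inequality. For the Lipschitz constant of $\grad \Funscaled$, observe that the Hessian $\grad^2 \Funscaled$ is tridiagonal since each summand depends on only two consecutive variables, so its operator norm is at most three times the uniform bound on its entries, each of which is a product of terms in $\{\Psi,\Psi',\Psi''\}$ and $\{\Phi,\Phi',\Phi''\}$. For the $\ell_\infty$ bound, each partial derivative is a sum of at most four such products.

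Item~4 follows by direct computation: for $1<i<T$,
\[
\grad_i \Funscaled(x) = -\Psi(-x_{i-1})\Phi'(-x_i) - \Psi(x_{i-1})\Phi'(x_i) - \Psi'(-x_i)\Phi(-x_{i+1}) - \Psi'(x_i)\Phi(x_{i+1}),
\]
with analogous expressions at the endpoints $i=1$ and $i=T$. Whenever $i > \prog[\frac{1}{2}](x)+1$, both $|x_{i-1}|\leq 1/2$ and $|x_i|\leq 1/2$, so $\Psi(\pm x_{i-1}) = 0$ and $\Psi'(\pm x_i) = 0$ by fact~(i), and all four terms vanish.

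Item~5 is the most delicate step, and quantitatively verifying it is the main obstacle. Setting $j = \prog[1](x) < T$, I would evaluate $\grad_{j+1} \Funscaled(x)$ at coordinate $i=j+1$: since $|x_j| > 1$, at least one of $\Psi(x_j), \Psi(-x_j)$ is bounded below by $\Psi(1)$, and since $|x_{j+1}|\leq 1$ we have $\Phi'(\pm x_{j+1}) \geq 1$, so the two ``driving'' terms $-\Psi(\pm x_j)\Phi'(\pm x_{j+1})$ have combined magnitude well above one. The risk is cancellation with the ``coupling'' terms $-\Psi'(\pm x_{j+1})\Phi(\pm x_{j+2})$, which are a priori nonzero and could in principle carry the opposite sign. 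Ruling out this cancellation requires exploiting the asymmetric signs built into $\Funscaled$ (the minus sign between the two halves of each summand) together with the positivity of $\Phi$ and $\Phi'$; the careful case analysis in \citet{carmon2019lower_i} shows that the four terms share a common sign and the total magnitude is strictly bounded below by one.
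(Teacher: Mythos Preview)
Your proposal is correct and matches the paper's approach; the paper itself defers all items except~\ref{item:lip} to \cite{carmon2019lower_i}, and proves item~\ref{item:lip} via exactly the tridiagonal Hessian bound you sketch. Two small sharpenings: to reach the stated constants $\Delta_0=12$ and $\gamma_\infty=23$ in items~\ref{item:val} and~\ref{item:grad} you need the observation that $\Psi$ vanishes on $(-\infty,1/2]$, so for every $t$ at most one of $\Psi(\pm t)$ and at most one of $\Psi'(\pm t)$ is nonzero---this collapses your ``four products'' to two; and in item~\ref{item:large-grad}, since $\Psi,\Psi',\Phi,\Phi'\ge 0$, every term in your displayed expression for $\nabla_{j+1}\Funscaled(x)$ already carries a leading minus sign, so the common-sign conclusion is immediate and no genuine case analysis is required.
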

\noindent Parts~\pref{item:val}--\pref{item:grad} of the 
lemma follow 
from~\cite[Lemma 3]{carmon2019lower_i} and its proof; we derive the 
precise value $\ell_1=152$ in \pref{app:basics}. Part \pref{item:zero-chain} 
follows from~\cite[Observation 3]{carmon2019lower_i} and part 
\pref{item:large-grad} is~\cite[Lemma 2]{carmon2019lower_i}.

We now turn to the construction of a probabilistic zero-chain for
$F_T$. The main technical difficulty in the construction lies in keeping
the variance of the stochastic gradient function bounded and, in
particular, independent of the dimension $T$. Indeed, consider a naive
construction that when queried at point $x$, returns $0$ with
probability $1-p$ and returns $\frac{1}{p}\cdot\grad\Funscaled(x)$
with probability $p$. While this is clearly a probability-$p$
zero-chain, the variance at point $x$ is
$\Omega\prn[\big]{{\nrm*{\grad\Funscaled(x)}^{2}_2}/{p}}$, which can 
be
as large as $T/p$. As we  let the dimension $T$ depend polynomially on 
$1/\eps$, removing this dimension dependence from the variance is 
critical for making the
oracle belong to $\OclassPop$ after rescaling.

Our key observation is that, since
$\nrm*{\grad\Funscaled(x)}_{\infty}\leq{}23$ by
\pref{lem:deterministic-construction}.\pref{item:grad}, we can keep
the variance bounded if, instead of deleting all coordinates
uniformly, we delete only a single important coordinate. Since our
goal is to construct a probabilistic zero-chain, and since $F_T$ is
itself a deterministic zero-chain, a natural choice of coordinate is
$\prog(x)+1$. This leads to the following stochastic gradient function:
\begin{align} \label{eq:basic-construction}
\brk*{\gunscaledBasic(x,z)}_i \defeq \grad_i \Funscaled(x) \cdot 
\prn*{1+\indicator{i > \prog(x)}\prn*{\frac{z}{p}-1}},
\end{align}
where $z\sim \mathrm{Bernoulli}(p)$. Note that for
all $i > \prog(x) + 1$,
$\grad{}_i\Funscaled(x)=0$, so only the specific
coordinate $\prog(x)+1$ is noisy.
\begin{lemma}\label{lem:pzc-basic}
  The stochastic gradient estimator $\gunscaledBasic$ is a
  probability-$p$ zero-chain, is unbiased for $\grad \Funscaled$, and has
  variance
	\begin{equation*}
	\E \norm{\gunscaledBasic(x,z) - \grad \Funscaled(x)}^2 \le \varsigma^2 
	\cdot \frac{1-p}{p}~~
	\text{for all }x\in\R^T,~\mbox{where }\varsigma=23.
	\end{equation*}
\end{lemma}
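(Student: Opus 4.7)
The plan is to verify the three claims (unbiasedness, probability-$p$ zero-chain, bounded variance) directly from the explicit form of $\gunscaledBasic$, using Lemma~\ref{lem:deterministic-construction} to control $\grad \Funscaled$.

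Unbiasedness is immediate: since $z\sim\mathrm{Bernoulli}(p)$ satisfies $\E[z/p - 1] = 0$, each coordinate of $\gunscaledBasic(x,z)$ has expectation $\grad_i \Funscaled(x)$.

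For the zero-chain properties, the key observation is that only the single coordinate $i^\star \defeq \prog(x)+1$ is modified by $z$; coordinates $i > i^\star$ are already annihilated by $\grad \Funscaled$ itself. Indeed, using monotonicity of $\prog[\alpha]$ in $\alpha$, Lemma~\ref{lem:deterministic-construction}.\pref{item:zero-chain} gives
\[
\prog[0]\prn*{\grad \Funscaled(x)} \le \prog[1/2](x) + 1 \le \prog[1/4](x) + 1 = \prog(x)+1,
\]
so $\grad_i \Funscaled(x) = 0$ for every $i > i^\star$, hence $[\gunscaledBasic(x,z)]_i = 0$ for those indices as well. This immediately yields $\prog[0](\gunscaledBasic(x,z)) \le \prog(x)+1$ with probability one, establishing \pref{eq:standard-zc}. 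For \pref{eq:prob-zc}, note that when $z=0$ the multiplicative factor $1 + \indicator{i > \prog(x)}(z/p - 1)$ equals $0$ on coordinate $i^\star$ as well, so $\prog[0](\gunscaledBasic(x,0)) \le \prog(x)$; since this event occurs with probability $1-p$, the probability of strict progress is at most $p$.

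Finally, for the variance, the same localization argument reduces the computation to a single coordinate. Writing
\[
\gunscaledBasic(x,z) - \grad \Funscaled(x) = \grad_{i^\star}\Funscaled(x)\cdot\prn*{\tfrac{z}{p}-1}\cdot e_{i^\star},
\]
we get $\E\norm{\gunscaledBasic(x,z) - \grad \Funscaled(x)}^2 = (\grad_{i^\star}\Funscaled(x))^2 \cdot \E[(z/p-1)^2]$. The first factor is bounded by $\nrm{\grad\Funscaled(x)}_\infty^2 \le \gamma_\infty^2 = 23^2 = \varsigma^2$ by Lemma~\ref{lem:deterministic-construction}.\pref{item:grad}, and a direct Bernoulli computation yields $\E[(z/p-1)^2] = (1-p)/p$. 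The only mildly delicate step is correctly threading the three thresholds $\prog[0]$, $\prog[1/4]$, $\prog[1/2]$; beyond that the proof is mechanical.
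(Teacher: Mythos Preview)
The proposal is correct and follows essentially the same route as the paper's proof: unbiasedness from $\E[z/p-1]=0$, the zero-chain property from $\prog[0](\grad\Funscaled(x))\le\prog[1/2](x)+1\le\prog[1/4](x)+1$ via \pref{lem:deterministic-construction}.\pref{item:zero-chain} together with the observation that $z=0$ kills coordinate $\prog(x)+1$, and the variance bound by localizing the error to that single coordinate and applying \pref{lem:deterministic-construction}.\pref{item:grad}.
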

\begin{proof}%
	\label{lem_proof:pzc-basic}
	First, we observe that $\E\brk*{\gunscaledBasic(x,z)} =  \grad \Funscaled(x)$ 
	for all $x\in\R^T$ by the definition~\pref{eq:basic-construction} and 
	the fact $\E\brk{\frac{z}{p}} = 1$, so any
        $\oracle$ with
        $\oracle_{\Funscaled}(x,z)=(\Funscaled(x),\gunscaledBasic(x,z))$ is
        indeed a stochastic first-order oracle.
	
	Second, we argue that the probability-$p$ zero-chain property holds. 
	Recall that $\prog[\alpha](x)$ is non-increasing in $\alpha$, so 
	$\prog(x) \ge \prog[\half](x)$. Therefore, by 
	\pref{lem:deterministic-construction}.\ref{item:zero-chain}, 
	$\brk{\gunscaledBasic(x,z)}_i = \grad_i \Funscaled(x) =0$ for all 
	$i>\prog(x)+1$, all $x\in\R^T$ and all $z\in\{0,1\}$. Moreover, since 
	$\P(z\ne0) = p$, we have 
	$\P\prn{\brk{\gunscaledBasic(x,z)}_{\prog(x)+1} \ne 0} \le p$,
        establishing that the construction \pref{eq:basic-construction} satisfies 
	\pref{def:zero-chain}. 
	
	Finally, we bound the variance. Note that the error term $\gunscaledBasic(x,z) - \grad \Funscaled(x)$ is non-zero only in the 
	coordinate $i_x\ldef{}\prog(x)+1$. Therefore,
	\begin{equation*}
	\E \norm{\gunscaledBasic(x,z) - \grad \Funscaled(x)}^2
         = \abs*{\grad_{i_x} \Funscaled(x)}^2 \, \E \prn*{\frac{z}{p}-1}^2
	\le \frac{\nrm{\grad \Funscaled(x)}_\infty^2 (1-p)}{p} \le \frac{23^2(1-p)}{p}, 
	\end{equation*}
	where the last inequality follows from \pref{lem:deterministic-construction}.\ref{item:grad}.
\end{proof}
With the construction in hand, we prove our first lower bound.
\begin{theorem}
\label{thm:zr_population}
There exist numerical constants
$c,c'>0$ such that for all $L, \Delta, \sigma^2>0$ 
and 
$\epsilon \le  c' \sqrt{L\Delta}$,
	\label{thm:main_zero_respecting}
	\begin{equation*}
	\minimaxZR
	\ge 
	c\cdot\frac{\DeltaF \LipGrad \sigma^2}{\epsilon^4}.
	\end{equation*}
Constructions of dimension $d=O\prn*{\frac{\Delta{}L}{\eps^{2}}}$ realize 
the lower bound.
\end{theorem}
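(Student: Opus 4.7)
\textbf{Proof plan for Theorem \ref{thm:zr_population}.} The plan is to use a rescaled version of the function $\Funscaled$ from \pref{lem:deterministic-construction}, paired with the probability-$p$ zero chain $\gunscaledBasic$ from \pref{lem:pzc-basic}, and then invoke \pref{lem:prob-zero-chain}. Specifically, I would fix parameters $u,\lambda > 0$ (to be chosen) and define
\[
\Fscaled(x) \defeq \lambda \Funscaled(x/u), \qquad \gscaled(x,z) \defeq (\lambda/u)\, \gunscaledBasic(x/u, z).
\]
By the properties in \pref{lem:deterministic-construction}, $\Fscaled(0)-\inf \Fscaled \le \lambda \Delta_0 T$, the gradient $\grad \Fscaled$ is $(\lambda \lip{1}/u^2)$-Lipschitz, and if $\prog[1](x/u) < T$ then $\|\grad \Fscaled(x)\| > \lambda/u$. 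Moreover, $\gscaled$ is an unbiased probability-$p$ zero-chain gradient estimator for $\Fscaled$ with variance at most $(\lambda/u)^{2} \vsigma^{2}(1-p)/p$ everywhere.

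Next I would choose the parameters to enforce $\Fscaled \in \Fclass$, gradient lower bound $\ge 2\epsilon$, and variance $\le \sigma^{2}$. Setting $\lambda/u = 2\epsilon$ ensures any non-fully-progressed iterate has gradient exceeding $2\epsilon$. The smoothness constraint $\lambda \lip{1}/u^{2} \le L$ combined with $\lambda/u = 2\epsilon$ forces $\lambda = 4\epsilon^{2} \lip{1}/L$ (and $u = \lambda/(2\epsilon)$). The initial gap constraint $\lambda \Delta_0 T \le \Delta$ then dictates $T = \lfloor \Delta/(\lambda \Delta_0)\rfloor = \Theta(\Delta L/\epsilon^{2})$; the assumption $\epsilon \le c' \sqrt{L \Delta}$ (for appropriate small $c'$) ensures $T$ is at least a moderate constant, say $T \ge \log 4 + 2$. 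For the variance, set $p \defeq \min\!\crl*{1,\, 4 \vsigma^{2} \epsilon^{2}/\sigma^{2}}$; then $(\lambda/u)^{2}\vsigma^{2}(1-p)/p = 4\epsilon^{2}\vsigma^{2}(1-p)/p \le \sigma^{2}$, so the induced oracle lies in $\OclassPop$.

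Now I would apply \pref{lem:prob-zero-chain} to any zero-respecting algorithm $\alg \in \AlgClassZR$ interacting with this oracle. With $\delta = 1/2$, the lemma gives that for every round $t \le (T - \log 2)/(2p) \eqdef T_0$, we have $\max_{k} \prog[0](x\ind{t,k}) < T$ with probability at least $1/2$. Since $\prog[1](\cdot) \le \prog[0](\cdot)$, this in particular implies $\prog[1](x\ind{T_0,1}/u) = \prog[1](x\ind{T_0,1})< T$ (the second equality uses that $u$ is a fixed positive scalar, so thresholds rescale identically), and therefore $\|\grad \Fscaled(x\ind{T_0,1})\| > 2\epsilon$ with probability at least $1/2$ by \pref{lem:deterministic-construction}.\ref{item:large-grad}. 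By Markov's inequality applied to the nonnegative quantity $\|\grad \Fscaled(x\ind{T_0,1})\|$, this implies $\E \|\grad \Fscaled(x\ind{T_0,1})\| > \epsilon$. Consequently $\minimaxZR > T_0$.

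The final step is to compute $T_0$. Using $T \ge 2\log 2$ we have $T - \log 2 \ge T/2$, so $T_0 \ge T/(4p)$. Plugging in $T = \Theta(\Delta L/\epsilon^{2})$ and $1/p \ge \Omega(\sigma^{2}/\epsilon^{2})$ (in the regime $\sigma^{2} \ge 4\vsigma^{2}\epsilon^{2}$, which is the binding one; otherwise the bound reduces to the noiseless $\Omega(\Delta L/\epsilon^{2})$ lower bound, which already dominates $\Omega(\Delta L \sigma^{2}/\epsilon^{4})$ and is absorbed by tuning $c$), we obtain $T_0 = \Omega(\Delta L \sigma^{2}/\epsilon^{4})$. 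The required dimension is $d = T = O(\Delta L/\epsilon^{2})$. I do not anticipate a serious obstacle: the construction is entirely dictated by \pref{lem:pzc-basic} and \pref{lem:prob-zero-chain}, and the only care needed is in the case analysis for the regime $\sigma^{2} \lesssim \epsilon^{2}$ and verifying that the constant $c'$ in the hypothesis suffices to make $T$ large enough for the high-probability event to carry through.
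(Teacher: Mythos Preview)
Your proposal is correct and follows essentially the same approach as the paper: rescale $\Funscaled$ so that its smoothness is $L$, its gradient lower bound is $2\epsilon$, and its suboptimality gap fits in $\Delta$, then pick $p=\Theta(\epsilon^2/\sigma^2)$ to meet the variance budget and invoke \pref{lem:prob-zero-chain} with $\delta=1/2$. One small slip: the equality $\prog[1](x/u)=\prog[1](x)$ is false in general (thresholds do \emph{not} ``rescale identically''); what you actually need---and what the paper uses implicitly---is $\prog[1](x/u)\le \prog[0](x/u)=\prog[0](x)<T$, which holds since $u>0$ preserves the zero pattern.
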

Before giving the proof, let us make  a few remarks.
\begin{itemize}[leftmargin=\parindent]
\item The bound is tight, in that it matches (up to a numerical constant) the 
  convergence rate for SGD (which is zero-respecting) \citep[][Eq.~(2.13)]{ghadimi2013stochastic}.
  Note that
  the restriction that $\epsilon \le  c' \sqrt{L\Delta}$ is without loss of 
  generality, since for $\epsilon >  c' \sqrt{L\Delta}$ we have
  $\nrm*{\grad{}F(0)}= O(\eps)$ for all functions $F\in\Fclass$, 
  so an $\epsilon$-stationary point is trivial to find.
\item The optimal complexity
  $\Theta\prn{\frac{\Delta{}L\sigma^{2}}{\veps^{4}}}$ is the product of the
  first-order oracle complexity for the deterministic setting, which is
  $\Theta\prn{\frac{\Delta{}L}{\veps^{2}}}$ \citep{carmon2019lower_i}, and
  the sample complexity of estimating a single gradient to precision $\eps$,
  which is $\Theta\prn{\frac{\sigma^{2}}{\eps^{2}}}$. This is the first
  setting we are aware of where the \emph{product} of these respective
  complexities characterizes the stochastic first-order complexity. Contrast
  to the convex setting, where the complexity scales with the
  sum~\citep{foster2019complexity}.

\item The lower bound does not depend on $K$, meaning that additional
  batch queries cannot by themselves improve on the rate obtained by SGD. 
  While at
  first glance this may seem like a strange consequence of the
  zero-respecting assumption, we will show that the same holds true
  for arbitrary algorithms, provided the dimension is sufficiently large.
\end{itemize}
\begin{proof}[\pfref{thm:zr_population}]
	Let $\Delta_0, \ell_1$ and $\varsigma$ be the numerical constants in 
	\pref{lem:deterministic-construction}.\ref{item:val}, 
	\pref{lem:deterministic-construction}.\ref{item:lip} and 
	\pref{lem:pzc-basic}, respectively. 
  Given accuracy parameter  
$\epsilon$, initial suboptimality $\Delta$, smoothness parameter $\LipGrad$ and 
variance parameter $\sigma^2$, we define
\begin{align*}
\Fscaled(x)=\frac{\LipGrad\lambda^{2}}{\lip{1}}
\Funscaled\left(\frac{x}{\lambda}\right),\quad\text{ 
where\quad
}\lambda=\frac{\lip{1} }{L}\cdot 2\epsilon,\quad\text{ and }\quad 
T=\floor*{
	\frac{\Delta}{\Delta_0 (\LipGrad \lambda^2 /\ell_1)}} =
\floor*{
\frac{\LipGrad\Delta}{\Delta_0\lip{1} 
	(2\epsilon)^{2}}},
\end{align*}
where we assume  $T\ge 3$, or equivalently  
$\epsilon \le  \sqrt{\frac{L\Delta}{12\Delta_0\lip{1} 
}}$. 
Let 
\[\gscaled{}(x,z)=\frac{L\lambda}{\lip{1}}\cdot \gunscaledBasic(x/\lambda, z)
=2\eps \cdot \gunscaledBasic(x/\lambda, z)
\]
denote the corresponding scaled stochastic
gradient function.  Now, by \pref{lem:deterministic-construction}.\ref{item:val}
and \pref{lem:deterministic-construction}.\ref{item:lip}, we have that
$\Fscaled$ is $\frac{L}{\ls_1}\cdot \ls_1=L$-smooth  and
has initial suboptimality bounded by $\Delta$. Likewise, by \pref{lem:pzc-basic},
\begin{align*}
\E \norm{\gscaled(x,z) - \grad \Fscaled(x)}^2 = \left(\frac{L\lambda}{\lip{1}}\right)^2
\E \left\|{\gscaled\left(\frac{x}{\lambda},z\right) - \grad 
	\Fscaled\left(\frac{x}{\lambda}\right)}\right\|^2 \le \frac{(2\varsigma   
	\epsilon)^2(1-p)}{p}.
\end{align*}
Therefore, setting 
$p=\min\left\{(2\varsigma\epsilon)^2/\sigma^2,1\right\}$ guarantees 
a variance 
bound of 
$\sigma^{2}$.

Next, Let $\oracle$ be any oracle in $\OclassPop$ for which 
$\oracle_{\Fscaled}(x,z)=(\Fscaled(x),\gscaled(x,z))$. Instantiating  
\pref{lem:prob-zero-chain} for $\delta=1/2$, we 
have that with probability at least $1/2$, 
$\max_{k\in\brk*{K}}\prog[0]\prn*{x\ind{t,k}_{\alg\brk{\oracleF}}} < T$ for 
all $t\leq{} 
{(T-1)}/{2p}$ and $k\in[K]$.
Now, by 
\pref{lem:deterministic-construction}.\ref{item:large-grad}, for every $x\in\R^T$ 
 such that  
$\prog[0](x)<T$, it holds that
\begin{equation*}
\nrm*{\grad \Fscaled(x)} = \frac{L\lambda}{\lip{1}} \nrm*{\grad 
	\Funscaled\left(\frac{x}{\lambda}\right)} >\frac{L\lambda}{\ls_1} =
2\epsilon,
\end{equation*}
So with probability at least $1/2$, we have for all $t\leq{} 
{(T-1)}/{2p}$ and $k\in[K]$ that $\nrm[\big]{\grad 
\Fscaled(x\ind{t,k}_{\alg\brk{\oracleF}})}>2\epsilon$. Therefore,
\begin{equation}
\E\norm[\big]{ \grad \Fscaled\prn[\big]{
		x\ind{t,k}_{\alg\brk{\oracleF}}
} }
>\eps,
\end{equation}
by which it follows that 
\begin{align*}
\minimaxZR
>
\frac{T-1}{2p}
\ge 
\left(\left\lfloor \frac{L\Delta}{4\Delta_0\lip{1} 
	\epsilon^{2}}\right\rfloor -1\right)
\frac{\sigma^2}{2(2\varsigma\epsilon)^2}
\ge 
\frac{1}{2^6\ell_1\Delta_0\varsigma^2} \cdot 
\frac{L\Delta\sigma^2}{\epsilon^4},
\end{align*}
where the last inequality uses that $\floor{x}-1\geq{}x/2$ whenever 
$x\geq{}3$.
\end{proof}

\subsection{Lower bound for the \mss setting}
\newcommand{\smoothOrc}{\overline{\oracle}} 
We now turn to lower bounds for the \mss setting. Here, we must ensure
that in addition to the variance constraint, our stochastic gradient
function satisfies the mean-squared smoothness constraint
\pref{eq:mss}. This requires a more
sophisticated construction than before, as the use of the
indicator function $\indicator{i >\prog(x)}$ makes the stochastic
gradient $\gunscaledBasic$ discontinuous. Indeed, let
$x=(1,1/4-\delta,0)$ and $y=(1,1/4,0)$. Then $\prog(x)=1<2 =
\prog(y)$, and for any $\delta\in(0,1/2)$ we have
\begin{align*}
\En_{z}\nrm*{\gunscaledBasic(x,z)-\gunscaledBasic(y,z)}^{2}
&\geq{}
  \En_{z}\abs*{\brk*{\gunscaledBasic(x,z)}_2-\brk*{\gunscaledBasic(y,z)}_2}^{2}\\
&= (1-p)\abs{\Psi(1)\Phi'(1/4-\delta)}^{2} + p\abs*{\tfrac{1}{p}\Psi(1)\Phi'(1/4-\delta)-\Psi(1)\Phi'(1/4)}^{2},
\end{align*}
which does not approach zero as $\delta\to{}0$. 

To overcome this issue, we replace the indicator
$\indicator{i >\prog(x)}$ with a smooth surrogate. Let
$\Gamma:\bbR\to\bbR$ be any smooth non-decreasing Lipschitz function with
$\Gamma(t)=0$ for all $t\leq{}1/4$ and
$\Gamma(t)=1$ for all $t\geq{}1/2$. For each $i$, we
define the following smoothed version of $\indicator{i > \prog(x)}$:
\begin{equation}
\softindfunc_{i}(x) \defeq 
\threshfunc\prn*{1-\prn*{\sum_{k=i}^T\threshfunc^2(|x_k|)}^{1/2}}
= \threshfunc\prn*{1-\norm*{\threshfunc\prn*{|x_{\ge 
          i}|}}},
\end{equation}
where $\Gamma(\abs*{x_{\geq{}i}})$ is a shorthand for a vector with entries 
$\Gamma(|x_i|), \Gamma(|x_{i+1}|), \ldots, \Gamma(|x_T|)$. Observe
that $\Theta_i$ indeed acts as a smoothed indicator: We have $\softindfunc_{i}(x) 
= 1$ for all $i > \prog[\frac{1}{4}](x)$ and $\softindfunc_{i}(x)=0$ for all 
$i\le\prog[\frac{1}{2}](x)$, and therefore
\begin{equation*}
  \indicator{i > \prog[\frac{1}{4}](x)} \le \softindfunc_{i}(x)  \le \indicator{i 
    > 
    \prog[\frac{1}{2}](x)}.
\end{equation*}

We define a new stochastic gradient function $\gunscaled$ by replacing the indicator function in $\gunscaledBasic$ with the smoothed indicator
$\Theta_i$:
\begin{equation}\label{eq:pairwise-construction}
\brk*{\gunscaled(x,z)}_i \defeq 
\grad_i \Funscaled(x)
\cdot 
\noisingfunc_i(x,z),
~~\mbox{where}~~
\noisingfunc_i(x,z) \defeq 1 + \softindfunc_{i}(x) \prn*{\frac{z}{p}-1},
\end{equation}
and $z\sim  \mathrm{Bernoulli}(p)$. To fully specify the construction,
we  take
\begin{equation}
\label{eq:theshfunc-def}
\threshfunc(t) = \frac{\int_{1/4}^{t} \Lambda(\tau) d\tau}{
	\int_{1/4}^{1/2} \Lambda(\tau') d\tau'},\quad
~~\mbox{where}~~\quad
\Lambda(t) = \begin{cases}
0, & t \le \frac{1}{4}~\mbox{or}~t \ge \frac{1}{2}, \\
\exp\prn*{ - 
	\frac{1}{100\prn*{t-\tfrac{1}{4}}\prn*{\tfrac{1}{2}-t}}},
& \frac{1}{4}  < t < \frac{1}{2}. \\
\end{cases}
\end{equation}
This is simply an integrated bump
function construction; see \pref{fig:gamma}. 

\begin{figure}
	\centering
	\includegraphics[width=0.9\textwidth]{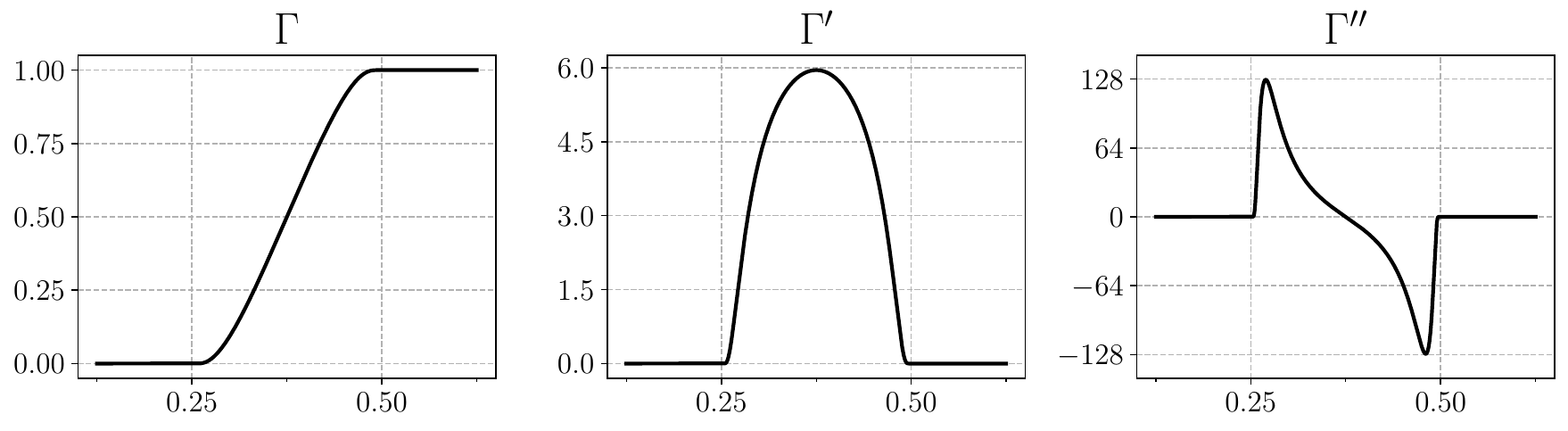}
	\caption{The construction $\Gamma$ in Eq.~\pref{eq:theshfunc-def} and 
	its derivatives; 
	\pref{obs:thresh} is evident.}\label{fig:gamma}
\end{figure}

\begin{observation}\label{obs:thresh}
	The function $\threshfunc$ satisfies 
	\begin{enumerate}
		\item \label{item:thresh-0} $\threshfunc(t)=0$ for all $t\in(-\infty,1/4]$.
		\item \label{item:thresh-1} $\threshfunc(t)=1$ for all $t\in[1/2,\infty)$.
		\item \label{item:thresh-lip}
                  $\threshfunc\in\mc{C}^\infty$, with $0
                  \le \threshfunc'(t) \le 6$ and $|\threshfunc''(t)|
                  \le 128$ for all $t\in\bbR$.
	\end{enumerate}
\end{observation}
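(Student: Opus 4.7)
The plan is to verify each of the three properties in sequence, leaning on the standard machinery for smooth bump functions. The first two properties are immediate consequences of the definition of $\threshfunc$, while the third requires smoothness of $\Lambda$ at the endpoints $t=1/4,1/2$ and quantitative estimates on $\max\Lambda$, $\max|\Lambda'|$, and the normalizing constant $Z\defeq\int_{1/4}^{1/2}\Lambda(\tau)d\tau$.

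\textbf{Properties \ref{item:thresh-0} and \ref{item:thresh-1}.} For $t\le 1/4$, the integrand $\Lambda$ vanishes on $(-\infty,1/4]$, so $\int_{1/4}^{t}\Lambda(\tau)d\tau=0$ and hence $\threshfunc(t)=0$. For $t\ge 1/2$, the integrand vanishes on $[1/2,\infty)$, so $\int_{1/4}^{t}\Lambda(\tau)d\tau=\int_{1/4}^{1/2}\Lambda(\tau)d\tau=Z$ and $\threshfunc(t)=Z/Z=1$.

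\textbf{Smoothness in Property \ref{item:thresh-lip}.} The only potentially delicate points are $t=1/4$ and $t=1/2$, where $\Lambda$ transitions between $0$ and the exponential formula. On $(1/4,1/2)$, writing $s(t)\defeq (t-1/4)(1/2-t)$, $\Lambda(t)=\exp(-1/(100\,s(t)))$ is a composition of smooth functions and hence $C^\infty$. Near the endpoints, $s(t)\to 0^+$ and a straightforward induction shows that any derivative of $\Lambda$ is a product of $\Lambda(t)$ with a rational function of $s(t)$; since $\Lambda(t)\le e^{-c/|t-1/4|}$ for some $c>0$ in a neighborhood of $1/4$ (and symmetrically at $1/2$), this product decays faster than any polynomial and all one-sided derivatives of $\Lambda$ match those of the zero function at $1/4$ and $1/2$. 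Hence $\Lambda\in C^\infty(\bbR)$, and by the fundamental theorem of calculus $\threshfunc\in C^\infty(\bbR)$ with $\threshfunc'(t)=\Lambda(t)/Z$ and $\threshfunc''(t)=\Lambda'(t)/Z$.

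\textbf{Quantitative bounds in Property \ref{item:thresh-lip}.} Since $\Lambda\ge 0$, we have $\threshfunc'\ge 0$. To bound $\threshfunc'$ from above, note that $s(t)$ attains its maximum $1/64$ at $t=3/8$, so $\max_t\Lambda(t)=\exp(-64/100)<1$. For the lower bound on $Z$, integrate $\Lambda$ over a subinterval centered at $3/8$ (e.g., $[5/16,7/16]$), on which $s(t)$ is bounded below by its endpoint value $3/256$, giving a concrete lower bound on $Z$ of the form $w\cdot\exp(-\mathrm{const})$ where $w$ is the subinterval width. Choosing the subinterval to (approximately) optimize this estimate yields $\threshfunc'(t)=\Lambda(t)/Z\le 6$. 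For $\threshfunc''$, direct differentiation gives $\Lambda'(t)=\Lambda(t)\cdot\frac{3/4-2t}{100\,s(t)^2}$, and the same style of argument---maximize $|\Lambda'|$ over $(1/4,1/2)$ and divide by the lower bound on $Z$---yields $|\threshfunc''(t)|\le 128$.

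The only real obstacle is the explicit numerical constants $6$ and $128$: the smoothness, monotonicity, and qualitative structure are essentially forced by the bump-function construction, but extracting the stated constants requires either careful analytic estimates (optimizing the choice of subinterval and using the explicit maxima of $\Lambda$ and $\Lambda'$) or a routine numerical verification. The factor $100$ in the definition of $\Lambda$ was presumably chosen precisely so that these constants come out reasonably small.
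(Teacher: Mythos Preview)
Your proposal is correct and, in fact, considerably more detailed than the paper's own treatment: the paper offers no proof at all, simply noting that $\Gamma$ is an integrated bump function and referring to a plot of $\Gamma,\Gamma',\Gamma''$ with the caption ``\pref{obs:thresh} is evident.'' One caveat worth flagging: the crude subinterval lower bound on $Z$ you sketch (e.g., over $[5/16,7/16]$, giving $Z\ge\tfrac{1}{8}e^{-256/300}\approx 0.053$) yields only $\Gamma'\le \max\Lambda/Z\approx 10$, not $6$; since the true value is $Z\approx 0.090$ while $\max\Lambda=e^{-0.64}\approx 0.527$, the constant $6$ has almost no slack, so you really do need the ``routine numerical verification'' you mention at the end rather than a closed-form subinterval estimate.
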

With these properties established, we prove the following \mss
analogue of \pref{lem:pzc-basic}.
\begin{lemma}\label{lem:pzc-pair}
 The stochastic gradient estimator $\gunscaled$ is a
probability-$p$ zero-chain, is unbiased for $\grad \Funscaled$, and 
satisfies
	\begin{equation}\label{eq:pzc-var-mss}
	\E \,\norm{\gunscaled(x,z) - \grad \Funscaled(x)}^2 \le 
	\frac{\varsigma^2(1-p)}{p}
		~~\text{and}~~
                \E\, \norm{\gunscaled(x,z) - \gunscaled(y,z) }^2 \le 
	\frac{\bar{\ls}_1^2}{p} \norm{x-y}^2,
	\end{equation}
	for all $x,y\in\R^T$, where $\varsigma=23$ and $\lipBar{1}=328$.
      \end{lemma}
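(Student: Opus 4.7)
The unbiasedness of $\gunscaled$ is immediate from $\E[z/p]=1$, which gives $\E[\nu_i(x,z)] = 1 + \softindfunc_i(x)(\E[z/p]-1) = 1$. For the probability-$p$ zero-chain property, I would combine two observations from \pref{lem:deterministic-construction}.\ref{item:zero-chain} and \pref{obs:thresh}: First, $\grad_i \Funscaled(x)=0$ for $i > \prog[1/2](x)+1$. Second, $\softindfunc_i(x) \le \indicator{i > \prog[1/2](x)}$, so $\softindfunc_i(x)=0$ for $i\le \prog[1/2](x)$, and conversely $\softindfunc_i(x)=1$ for $i > \prog[1/4](x) = \prog(x)$. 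Combining these, $[\gunscaled(x,z)]_i$ vanishes for all $i>\prog(x)+1$ (any $z$), and at $i=\prog(x)+1$ we have $\softindfunc_i(x)=1$, so $\nu_i(x,z)=z/p$, which is zero unless $z=1$ (probability $p$).

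For the variance bound, the pointwise error $[\gunscaled(x,z)-\grad\Funscaled(x)]_i = \grad_i \Funscaled(x)\, \softindfunc_i(x)\,(z/p-1)$ is nonzero only when both $\grad_i\Funscaled(x)\neq 0$ ($i\le \prog[1/2](x)+1$) and $\softindfunc_i(x)>0$ ($i>\prog[1/2](x)$), which intersect only at the single coordinate $i^* = \prog[1/2](x)+1$. Since $\abs{\grad_{i^*}\Funscaled(x)}\le 23$ by \pref{lem:deterministic-construction}.\ref{item:grad} and $\E(z/p-1)^2=(1-p)/p$, the variance is at most $23^2(1-p)/p$.

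The mean-squared smoothness bound is the technical crux. Writing $a_i = \grad_i \Funscaled(x)-\grad_i\Funscaled(y)$ and $b_i = h_i(x)-h_i(y)$ where $h_i(x) \defeq \grad_i \Funscaled(x)\,\softindfunc_i(x)$, a direct expansion using $\E(z/p-1)=0$ yields
\begin{equation*}
\E\,\norm{\gunscaled(x,z)-\gunscaled(y,z)}^2 = \norm{\grad\Funscaled(x)-\grad\Funscaled(y)}^2 + \frac{1-p}{p}\norm{h(x)-h(y)}^2.
\end{equation*}
The first term is bounded by $\lip{1}^2\norm{x-y}^2$ via \pref{lem:deterministic-construction}.\ref{item:lip}. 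For the second, the plan is to bound $\norm{Dh(x)}_{\op}$ uniformly. By the product rule, $[Dh(x)v]_i = \softindfunc_i(x) [\grad^2\Funscaled(x) v]_i + \grad_i\Funscaled(x)\, \langle\grad\softindfunc_i(x),v\rangle$, and triangle inequality splits $\norm{Dh(x)v}$ into $\|A\|+\|B\|$. For $\|A\|$, the bound $\softindfunc_i(x)\le 1$ gives $\|A\|\le \norm{\grad^2\Funscaled(x)v} \le \lip{1}\norm{v}$.

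The hard part is bounding $\|B\|$. The key structural observation is that $B_i$ vanishes for all $i$ outside a single coordinate: for $i > \prog[1/2](x)+1$ we have $\grad_i\Funscaled(x)=0$, while for $i \le \prog[1/2](x)$ we have $s_i(x)\defeq \norm{\threshfunc(|x_{\ge i}|)}\ge 1$ (because it contains the term $\threshfunc(|x_{\prog[1/2](x)}|)=1$), so $\threshfunc'(1-s_i(x))=0$ and hence $\grad\softindfunc_i(x)=0$. Thus only $i^*=\prog[1/2](x)+1$ contributes, giving $\|B\| \le \abs{\grad_{i^*}\Funscaled(x)}\cdot \norm{\grad\softindfunc_{i^*}(x)}\cdot\norm{v} \le 23\cdot \norm{\grad\softindfunc_{i^*}(x)}\cdot\norm{v}$. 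The remaining task is to bound $\norm{\grad\softindfunc_{i^*}(x)}$ by a numerical constant, using the chain rule $\grad\softindfunc_i(x) = -\threshfunc'(1-s_i(x))\grad s_i(x)$, the bound $\abs{\threshfunc'}\le 6$ from \pref{obs:thresh}, and the calculation
\begin{equation*}
\norm{\grad s_i(x)}^2 = \frac{\sum_{j\ge i}\threshfunc(\abs{x_j})^2\threshfunc'(\abs{x_j})^2}{s_i(x)^2} \le 36.
\end{equation*}
Combining these bounds yields $\norm{Dh(x)}_{\op}\le C$ for a numerical constant $C$, and hence $\bar{\lip{1}}^2 = \lip{1}^2 + C^2$ with the claimed value (the exact constant $328$ presumably requires tightening one of the intermediate $\threshfunc$-inequalities, which I treat as a routine calculation). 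The main obstacle throughout is the careful accounting of which coordinates contribute nontrivially, leveraging the chain structure of $\Funscaled$ together with the support of $\threshfunc'$.
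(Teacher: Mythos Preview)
Your argument is correct in all essential respects. The treatment of unbiasedness, the probability-$p$ zero-chain property, and the variance bound is essentially identical to the paper's proof: both identify that $\delta(x,z)=\gunscaled(x,z)-\grad\Funscaled(x)$ is supported on the single coordinate $i_x=\prog[1/2](x)+1$, and then apply \pref{lem:deterministic-construction}.\ref{item:grad}.

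For the mean-squared smoothness bound you take a somewhat different route than the paper. Both arguments begin from the same decomposition
\[
\E\,\norm{\gunscaled(x,z)-\gunscaled(y,z)}^2 = \norm{\grad\Funscaled(x)-\grad\Funscaled(y)}^2 + \tfrac{1-p}{p}\,\norm{h(x)-h(y)}^2,
\]
with $h_i(x)=\grad_i\Funscaled(x)\,\softindfunc_i(x)$. From here, the paper exploits the \emph{sparsity of $h$ itself}: since $h(x)$ and $h(y)$ are supported on $i_x$ and $i_y$ respectively, only those two coordinates contribute to $\norm{h(x)-h(y)}^2$, and each is controlled directly via the $6^2$-Lipschitzness of $\softindfunc_i$ (no derivatives of $\softindfunc_i$ enter). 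Your route instead bounds the Jacobian $\opnorm{Dh(x)}$ pointwise, using the sparsity of the rank-one part $B$ (only $i^*=\prog[1/2](x)+1$ contributes) together with the gradient bound $\norm{\grad\softindfunc_i(x)}\le 36$. Both rely on the same structural observation about where $\softindfunc_i$ and $\grad_i\Funscaled$ overlap; the paper's version is slightly more elementary (no differentiability of $\softindfunc_i$ is needed), while yours is more systematic. Your approach yields a larger numerical constant---with $\norm{B}\le 23\cdot 36\,\norm{v}$ and $\norm{A}\le 152\,\norm{v}$ you get $\bar\ell_1$ on the order of $10^3$ rather than $328$---so the ``routine tightening'' you allude to would in fact require switching to the paper's two-coordinate argument rather than sharpening the Jacobian bound.
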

We defer the proof of \pref{lem:pzc-pair} to \pref{app:pzc-pair}. The 
proofs for the probability-$p$ zero-chain property and variance
  bound are similar to \pref{lem:pzc-basic}. For the mean-squared
  smooth property, we show that for any $x$, the vector
  $\delta(x,z) = \gunscaled(x,z)-\grad{}\Funscaled(x)$ has at most one
  non-zero coordinate, given by
  $\prog[\frac{1}{2}](x)+1$. If we denote
  $i_x=\prog[\frac{1}{2}](x)+1$ and
  $i_y=\prog[\frac{1}{2}](y)+1$, then we can bound $\E
  \norm{\gunscaled(x,z) - \gunscaled(y,z) }^2$ by first appealing to
  smoothness of $\Funscaled$, and then using the Lipschitz property of
  $\Theta_i$ to bound
  $\En\abs*{\delta_{i_x}(x,z)-\delta_{i_x}(y,z)}^{2}$ and
  $\En\abs*{\delta_{i_y}(x,z)-\delta_{i_y}(y,z)}^{2}$.

  Our lower bound for the \mss setting now follows from another simple
scaling argument. 
\begin{theorem}
  \label{thm:mean_smooth_zero_respecting}
There exist numerical constants $c,c'>0$ such that for all $\LipGradBar, \Delta, 
\sigma^2>0$ 
and 
$\epsilon \le  c' \sqrt{\LipGradBar\Delta}$,
\begin{equation*}
\minimaxZRMSS
\ge
c\cdot\prn*{\frac{\DeltaF \LipGradBar 
\sigma}{\epsilon^3}+\frac{\sigma^{2}}{\eps^{2}}}.
\end{equation*}
Constructions of dimension $d=O(1 + 
\frac{\Delta\LipGradBar}{\sigma \epsilon})$ realize 
	the lower bound.
\end{theorem}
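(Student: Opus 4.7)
The plan is to establish the two terms of the lower bound separately and combine them via the fact that the maximum of two lower bounds is within a factor of two of their sum. The first term $\frac{\Delta\LipGradBar\sigma}{\eps^3}$ arises from a scaling of the mean-squared smooth probabilistic zero-chain of \pref{lem:pzc-pair}, directly paralleling the proof of \pref{thm:zr_population}; the additive $\frac{\sigma^{2}}{\eps^{2}}$ term follows from a classical one-dimensional Le Cam--style argument that applies to any algorithm (in particular, to zero-respecting ones).

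For the first term, I would define
\[
\Fscaled(x) = 2\eps\lambda\cdot\Funscaled(x/\lambda),\qquad \gscaled(x,z) = 2\eps\cdot\gunscaled(x/\lambda, z),
\]
with $\lambda, T, p$ to be determined. By \pref{lem:pzc-pair}, $\gscaled$ is unbiased for $\grad\Fscaled$, its variance is at most $(2\eps\varsigma)^{2}(1-p)/p$, and its mean-squared smoothness constant is $\tfrac{2\eps\lipBar{1}}{\lambda\sqrt{p}}$. I would choose $p=\min\{1,(2\varsigma\eps/\sigma)^{2}\}$ to meet the variance bound, then $\lambda = \tfrac{2\eps\lipBar{1}}{\LipGradBar\sqrt{p}}$ to meet the MSS bound $\LipGradBar$. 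Initial suboptimality is at most $2\eps\lambda\Delta_{0}T$, so setting $T=\lfloor\Delta/(2\eps\lambda\Delta_{0})\rfloor$ ensures $\Fscaled\in\Fclass$ for the rescaled parameters. In the regime $p=(2\varsigma\eps/\sigma)^{2}<1$, plugging in gives $T=\Theta(\Delta\LipGradBar/(\eps\sigma))$. As in the proof of \pref{thm:zr_population}, applying \pref{lem:prob-zero-chain} with $\delta=\tfrac{1}{2}$ together with \pref{lem:deterministic-construction}.\ref{item:large-grad} produces the lower bound $T/(2p)=\Omega(\Delta\LipGradBar\sigma/\eps^{3})$. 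The hypothesis $\eps\le c'\sqrt{\LipGradBar\Delta}$ is what is needed simultaneously to guarantee $T\ge 3$ and $p\le 1$; it is binding when $\sigma^{2}\asymp\Delta\LipGradBar$.

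For the additive $\sigma^{2}/\eps^{2}$ term, I would take a one-dimensional construction $F_{b}(x)=bx+\tfrac{\LipGradBar}{2}x^{2}$ for $b\in\{\pm\eps\}$, smoothly truncated outside a bounded interval so that the initial suboptimality is bounded and $F_{b}\in\cF(\Delta,\LipGradBar)$, equipped with the oracle $g(x,z)=b+\LipGradBar x+\sigma z$ where $z\sim N(0,1)$. This oracle is unbiased, satisfies the variance bound, and satisfies the mean-squared smoothness property with constant $\LipGradBar$ exactly. Any $\eps/2$-stationary point $x$ of $F_{b}$ must satisfy $\sgn(x)=-\sgn(b)$, so an algorithm solving this instance must reliably distinguish $b=+\eps$ from $b=-\eps$ from Gaussian observations with means differing by $2\eps$ and noise $\sigma$. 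Le Cam's two-point method yields the $\Omega(\sigma^{2}/\eps^{2})$ query lower bound for this hypothesis test, and the restriction to zero-respecting algorithms is immaterial in dimension one (after the first query the observed gradient is almost surely nonzero, so no coordinate restriction binds).

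The main obstacle is getting the scaling bookkeeping right for the first term: the three parameters $\lambda$, $p$, and $T$ are coupled through three constraints (MSS, variance, suboptimality), and only a careful choice makes all three match simultaneously while maximizing $T/p$. The dimension bound $d=O(1+\Delta\LipGradBar/(\sigma\eps))$ is then immediate from $T=\Theta(\Delta\LipGradBar/(\eps\sigma))$ for the chain construction together with the one-dimensional construction for the additive term.
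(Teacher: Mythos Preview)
Your approach is essentially identical to the paper's: scale the mean-squared smooth probabilistic zero-chain of \pref{lem:pzc-pair} to obtain the $\Delta\LipGradBar\sigma/\eps^{3}$ term, and invoke a one-dimensional Le Cam two-point argument (the paper's \pref{lem:stat_lower_bound}) for the additive $\sigma^{2}/\eps^{2}$ term. Your parameter choices $p$, $\lambda$, $T$ coincide with the paper's up to naming.

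There is one genuine imprecision you should fix. You write that the hypothesis $\eps\le c'\sqrt{\LipGradBar\Delta}$ ``is what is needed simultaneously to guarantee $T\ge 3$ and $p\le 1$.'' But $p\le 1$ holds by construction, and in the interesting regime $p=(2\varsigma\eps/\sigma)^{2}<1$ one computes $T=\Theta(\Delta\LipGradBar/(\sigma\eps))$, so $T\ge 3$ requires $\eps\lesssim \Delta\LipGradBar/\sigma$, which is \emph{not} implied by $\eps\le c'\sqrt{\LipGradBar\Delta}$ when $\sigma$ is large. The paper handles this by an explicit case split: when $T\ge 3$ the chain argument gives the $\eps^{-3}$ term directly; when $T<3$ (equivalently $\sigma\gtrsim\Delta\LipGradBar/\eps$), one has $\sigma^{2}/\eps^{2}\gtrsim\Delta\LipGradBar\sigma/\eps^{3}$, so the estimation lower bound you prove independently already dominates the first term. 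You have all the ingredients for this, but you need to make the case analysis explicit rather than asserting the hypothesis handles it.

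A minor note: for the Le Cam construction you do not need to ``smoothly truncate'' the quadratic. The untruncated $F_{b}(x)=bx+\tfrac{\LipGradBar}{2}x^{2}$ already has initial suboptimality $\eps^{2}/(2\LipGradBar)\le\Delta$ under the hypothesis, and truncation would risk breaking the exact $\LipGradBar$-mean-squared smoothness of your oracle.
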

\pref{thm:mean_smooth_zero_respecting} is tight, since the upper bounds for
SPIDER~\citep{fang2018spider} and SNVRG \citep{zhou2020stochastic} match it up to constants. %
As with \pref{thm:zr_population}, the restriction
$\eps\leq{}O(\sqrt{\LipGradBar\Delta})$ is essentially without loss of
generality. \pref{thm:mean_smooth_zero_respecting} leaves open the
possibility that there exists an algorithm that achieves
$O(\veps^{-3})$ in the \mss setting using $K=1$; see
\pref{sec:discussion} for further discussion.

We defer the proof of \pref{thm:mean_smooth_zero_respecting} to
\pref{app:mean_smooth_zero_respecting}, as it is very similar to that of 
\pref{thm:zr_population}. In 
particular, it uses the same scaling argument and replaces $\LipGrad$ with 
roughly $\LipGradBar \epsilon / \sigma$. This results in the final instance 
scaled as 
$\Fscaled(x)\propto{\eps\sigma\LipGradBar^{-1}}\Funscaled(\frac{\LipGradBar
 x}{\sigma})$. The 
new scaling introduces an additional restriction that
$\eps\leq{}O(\frac{\Delta\bar{L}}{\sigma})$. When this does not hold,
one has $\frac{\Delta\bar{L}\sigma}{\eps^{3}}\geq{}c\cdot\frac{\sigma^{2}}{\eps^{2}}$,
and the claimed lower bound follows from a standard estimation lower bound
(see \pref{lem:stat_lower_bound} in \pref{app:basics}).

\section{Lower bounds for randomized algorithms}
\label{sec:randomized}

\newcommand{\Frandgeneric}{\tilde{F}_U}
\newcommand{\grandgeneric}{\tilde{g}_U}
\newcommand{\frandgeneric}{\tilde{f}_U}
\newcommand{\Orandgeneric}{\tilde{\mathsf{O}}_U}
\newcommand{\Osrandgeneric}{\tilde{\mathsf{O}}_U^{\text{Stat}}}

We now extend our lower bound construction for
zero-respecting algorithms into a lower bound for arbitrary,
potentially randomized algorithms.
Our main theorem provides optimal lower bounds on the minimax complexities \pref{eq:minimax_rand} and \pref{eq:minimax_rand_mss} for the
\pop and \mss settings. 
\begin{theorem}
  \label{thm:main_randomized}
There exist numerical constants $c,c'>0$ such that for all $L, \Delta, 
\sigma^2>0$ 
and 
$\epsilon \le  c' \sqrt{L\Delta}$,
	\begin{equation}\label{eq:main-result-pop}
          \minimaxRand
	\ge
	c\cdot\frac{\DeltaF \LipGrad \sigma^2}{\epsilon^4},
	\end{equation}
	and for all $\LipGradBar >0$ and $\epsilon \le  c' 
	\sqrt{\LipGradBar\Delta}$, we have
	\begin{equation}\label{eq:main-result-mss}
          \minimaxRandMSS
	\ge
	c\cdot\prn*{\frac{\DeltaF \LipGradBar 
	\sigma}{\epsilon^3}+\frac{\sigma^{2}}{\veps^{2}}}.
	\end{equation}
Constructions of dimension 
$\Otilde\prn*{ K \Delta^2 \LipGrad^2 \sigma^2 \epsilon^{-6} }$ realize the 
lower bound~\pref{eq:main-result-pop} and constructions of 
dimension $d=\Otilde\prn*{ K \Delta^2 \LipGradBar^2 \epsilon^{-4} }$  
realize the lower bound~\pref{eq:main-result-mss}. 
\end{theorem}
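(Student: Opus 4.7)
The plan is to reduce the randomized case to the zero-respecting case of Theorems~\ref{thm:zr_population} and \ref{thm:mean_smooth_zero_respecting} via a random orthogonal rotation, following the template of~\cite{woodworth2017lower, carmon2019lower_i}. Take the hard construction $(\Fscaled, \gscaled)$ from the proof of \pref{thm:zr_population} (and its \mss analogue), which lives in $\bbR^T$ with $T = \Theta(\Delta L / \eps^2)$. Choose a larger ambient dimension $d$, sample a uniformly random orthogonal embedding $U \in \bbR^{d \times T}$ with $U^\trn U = I_T$, and lift the construction by defining the rotated objective $\Frand(x) \defeq \Fscaled(U^\trn x)$ with stochastic gradient $\grand(x, z) \defeq U\, \gscaled(U^\trn x, z)$. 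By Yao's principle it suffices to lower bound the expected iteration count of a deterministic algorithm against this random-$U$ distribution over instances, so the desired bound follows once I prove that, with probability $\ge \tfrac12$ over $U$, any deterministic algorithm needs at least $\Omega(T/p)$ rounds to produce a query whose rotated gradient is small.

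The rotation argument itself works by showing that a random algorithm interacting with $\grand$ behaves essentially zero-respecting in the $U$-basis. The key step is a \emph{compactification} $(\Frandcomp, \grandcomp)$ that replaces $\Fscaled$ and $\gscaled$ with variants in which the $i$th (rotated) coordinate is smoothly damped by a factor $\rho(|[U^\trn x]_i|)$ which vanishes for $|[U^\trn x]_i| \le \alpha$ and equals $1$ for $|[U^\trn x]_i| \ge 2\alpha$, for a threshold $\alpha$ chosen slightly below $1/4$. Because $\Funscaled$ is a deterministic zero-chain by \pref{lem:deterministic-construction}, this damping changes nothing when the query is already zero-respecting; for any other query it zeroes out components $i$ in which $[U^\trn x]_i$ is tiny, so the oracle response depends only on truly "discovered" coordinates. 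Now I invoke the standard concentration fact that for $u_i$ a column of a uniformly random orthogonal frame and for any vector $v$ independent of $u_i$, $|\langle v, u_i \rangle| \le \alpha \nrm{v}$ with probability at least $1 - e^{-\Omega(\alpha^2 d)}$. Since each query $x\ind{t,k}$ lies in the span of the first $t-1$ batches of oracle responses, a union bound over $KT$ rounds and the at-most-$T$ undiscovered directions shows that, provided $d = \tilde\Omega(K T^2 / p)$, with high probability every query has small projection on every undiscovered $u_i$. On this event $\grandcomp$ agrees with a probability-$p$ zero-chain oracle in the rotated basis, so \pref{lem:prob-zero-chain} gives the $\Omega(T/p)$ lower bound exactly as in Section~\ref{sec:zero_respecting}.

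The main obstacle is ensuring that the compactification preserves all required properties simultaneously: (i) $\hat F$ still has $\Delta$-bounded initial suboptimality and $L$-Lipschitz (resp.\ $\bar L$-mean-squared-smooth) gradients, so we remain in $\Fclass$ after rescaling; (ii) $\hat g$ is still unbiased for $\nabla \hat F$, which forces $\hat F$ and $\hat g$ to be damped by the same factor $\rho$ so that $\E[\hat g(x,z)] = \nabla(\Fscaled \circ U^\trn)(x) \cdot \rho(\cdot)$ still equals $\nabla \hat F$ pointwise; (iii) the variance and \mss bounds from \pref{lem:pzc-basic}/\pref{lem:pzc-pair} are not inflated beyond constants by the rotation and damping, which requires $\rho$ to be smooth and to take values in $[0,1]$ with a bounded derivative (analogous to \pref{obs:thresh}). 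Verifying (i)--(iii) is routine but delicate since the damping factor $\rho(|[U^\trn x]_i|)$ depends on $x$ through a nonlinear function of $U^\trn x$; the cleanest way is to choose $\rho$ from the same bump-function family used in~\pref{eq:theshfunc-def} and track how each derivative contributes to the Lipschitz and \mss constants. For the \mss setting one additionally needs the estimation term $\sigma^2/\eps^2$, which I would prove separately by a standard Le Cam or Assouad argument (as in \pref{lem:stat_lower_bound}) on a one-dimensional subproblem, and then combine with the chain-based bound via a maximum. After rescaling exactly as in the zero-respecting proofs, this yields both \eqref{eq:main-result-pop} and \eqref{eq:main-result-mss} in dimension $d = \tilde O(K T^2 / p) = \tilde O((\Delta L/\eps^2)^2)$ absorbing $K,\sigma$ into the $\tilde O$.
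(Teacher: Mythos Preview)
Your high-level plan (random rotation, concentration of undiscovered columns, reduction to the zero-respecting analysis, plus a separate Le~Cam argument for the $\sigma^2/\epsilon^2$ term) is correct and is exactly the paper's route. However, your ``compactification'' step misidentifies the obstacle and leaves a genuine gap.

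The concentration fact you invoke gives only $|\langle x\ind{t,k}, u_j\rangle| \lesssim \nrm{x\ind{t,k}}/\sqrt{d}$, i.e., a bound \emph{relative to the query norm}. To conclude that the projection onto an undiscovered column is below an \emph{absolute} threshold $\alpha$ you need $\nrm{x\ind{t,k}}$ bounded, but an arbitrary algorithm may query points of arbitrarily large norm (and your assertion that ``each query lies in the span of the first $t-1$ batches of oracle responses'' is simply false for algorithms outside $\AlgZR$). A per-coordinate damping factor $\rho(|[U^\trn x]_i|)$ does nothing to prevent this: if $\nrm{x}$ is large enough, $|[U^\trn x]_i|$ can exceed $2\alpha$ on undiscovered coordinates and your damped oracle will happily leak information about them. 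The paper's fix is different and essential: it composes the instance with a soft projection $\rho(x)=x/\sqrt{1+\nrm{x}^2/R^2}$ onto a ball of radius $R=\Theta(\sqrt{T})$, so that $\nrm{U^\trn \rho(x)}\le R$ is guaranteed for \emph{every} query, and then adds a quadratic $\tfrac{\eta}{2}\nrm{x}^2$ so that points with $\nrm{x}\gg R$ still have large gradient (\pref{lem:randomized_unbounded}).

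Separately, your per-coordinate damping is unnecessary even once queries are bounded. The threshold $1/4$ in $\prog$ was built into Definition~\ref{def:zero-chain} precisely so that, once concentration guarantees $|\langle u_j,\rho(x)\rangle|<1/4$ for all undiscovered $j$, the original (undamped) probability-$p$ zero-chain already satisfies $\prog[0](g(U^\trn\rho(x),z))\le \prog(U^\trn\rho(x))+1$; no extra modification of $F$ or $g$ is needed. In short: replace ``damp small coordinates'' by ``soft-project the whole query to a ball and add $\tfrac{\eta}{2}\nrm{x}^2$'', and the rest of your outline goes through as in \pref{lem:randomized_bounded}--\pref{lem:compression}.
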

In the remainder of the section we outline the proof of 
\pref{thm:main_randomized}; we defer all formal proofs to 
\pref{app:randomized}. Our approach is to lift the instance developed in the 
previous
section to a hard distribution over functions such that for any randomized 
algorithm a 
a function drawn from this distribution is 
hard high probability. This
approach closely follows \cite{carmon2019lower_i,woodworth2017lower}, 
though the analysis differs in a few technical points.

Given a function $F(x)$ and a gradient 
estimator $g(x,z)$, we define the rotated instance 
\begin{equation*}
\Frandgeneric(x)\defeq{}F(U^\top x),\quad\text{and}\quad
\grandgeneric(x,z)\defeq{}Ug(U^\top x, z),
\end{equation*}
where 
$U\in\Ortho(d,T)\ldef\crl*{U\in\bbR^{d\times{}T}\mid{}U^{\trn}U=I_{T}}$ is 
a matrix with orthogonal unit norm columns. For any such $U$ we define an 
oracle for the rotated function according to
\begin{equation}\label{eq:rotated-oracle}
\oracle_{\Frandgeneric}(x,z)=(\Frandgeneric(x),\grandgeneric(x,z)).
\end{equation}

When $U$ is drawn uniformly from $\Ortho(d,T)$, any algorithm interacting 
with $\oracle_{\Frandgeneric}$ produces queries $\{x\ind{t,k}\}$ 
such that the sequence $\{U^\top x\ind{t,k}\}$ behaves essentially like the 
queries of a zero-respecting algorithm interacting with of $\oracle_F$. 
More precisely, for sufficiently large $d$ we can guarantee that every entry 
of $U^\top x\ind{t,k}$ that is significantly far from zero (say, with 
absolute value $>1/4$) is in the support of a previous oracle 
response $g(U^\top x\ind{t',k'},z\ind{t'})$ for some $t'<t$ and $k'\in[K]$. 
This follows because oracle responses provide essentially no 
information on coordinates outside that support, and therefore, 
these coordinates of $U^\top x\ind{t,k}$ behave roughly as coordinates of 
a spherically uniform vector in dimension 
$d-tK$, and we can obtain a high probability bound on their magnitude that 
scales as 
$\norm{x\ind{t,k}}\sqrt{tK}/\sqrt{d-tK}$; the precise argument requires 
careful 
handling of the  information leaked at each step. By assuming that the 
queries are bounded 
and choosing sufficiently large $d$, we guarantee that coordinates outside 
the support are smaller than $1/4$ and therefore that the zero-respecting 
structure obtains. Combining this structure with \pref{def:zero-chain} of 
probabilistic zero-chains implies control over $\prog(U^\top x\ind{t,k})$, 
as we state formally in the following generalization of 
\pref{lem:prob-zero-chain}, whose proof we provide in
\pref{app:proof-randomized-bounded}.
\begin{lemma}
  \label{lem:randomized_bounded}
  Let $F:\bbR^{T}\to\bbR$ and let $g:\R^d\times\zset\to\R^d$ be 
  probability-$p$ zero chain. Let $R>0$, $\delta\in(0,1)$, and 
  $\alg\in\AlgRand(K)$ be any algorithm that produces queries with norm 
  bounded by $R$. Additionally let $d\geq{}\ceil{18 \frac{R^2 KT}{p}\log 
    \frac{2KT^2}{p\delta}}$, $U$ be uniform on $\Ortho(d,T)$, and 
  $\oracle_{\Frandgeneric}$ be as in \pref{eq:rotated-oracle}.
  Then with probability at least $1-\delta$,\footnote{The 
    event holds with
    probability at least $1-\delta$ with respect to the random choice of $U$ 
    and the 
    oracle seeds $\{z\ind{t}\}$, even when conditioned over any randomness 
    in $\alg$.}
  \begin{equation}\label{eq:randomized_bounded}
    \max_{k\in\brk*{K}}\prog(U^{\trn} 
    x\ind{t,k}_{\alg[\oracle_{\Frandgeneric}]}) < T~~\mbox{for all}~~t\le 
    \frac{T-\log\frac{2}{\delta}}{2p}.
  \end{equation}
\end{lemma}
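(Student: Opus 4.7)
The plan is to reduce the randomized setting to the zero-respecting analysis of \pref{lem:prob-zero-chain}. I would track a ``revealed index'' process $J_t$, defined as the largest coordinate $i\in[T]$ such that the column $u_i$ of $U$ has been exposed by an oracle response through round $t$, with $J_0=0$. Arguing inductively on $t$, as long as $\prog(U^\top x\ind{s,k})\le J_{s-1}$ for all $s\le t$ and $k\in[K]$ (the ``zero-respecting event''), the zero-chain property~\pref{eq:standard-zc} of $g$ forces $g(U^\top x\ind{s,k},z\ind{s})$ to be supported in $\{1,\dots,J_{s-1}+1\}$, so the oracle response $U g(U^\top x\ind{s,k},z\ind{s})$ lies in $\spn(u_1,\dots,u_{J_s})$; together with the analogous chain structure of the underlying $F$ (which only depends on coordinates up to $\prog[1/2](\cdot)+1$), this means the entire history through round $t$ is a deterministic function of $(u_1,\dots,u_{J_t})$, the algorithm's seed, and the $z$-seeds. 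Consequently, conditioned on this information, the remaining columns $u_{J_t+1},\dots,u_T$ are Haar-distributed on the orthonormal frames of the orthogonal complement of $\spn(u_1,\dots,u_{J_t})$, which has dimension at least $d-T$.

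Next, I would invoke high-dimensional spherical concentration. For any $j>J_{t-1}$, decompose $x\ind{t,k}=x_\parallel\ind{t,k}+x_\perp\ind{t,k}$ into its projection onto $\spn(u_1,\dots,u_{J_{t-1}})$ and the perpendicular part; since $u_j$ is orthogonal to the first $J_{t-1}$ columns, $\langle u_j, x\ind{t,k}\rangle=\langle u_j,x_\perp\ind{t,k}\rangle$, and $\|x_\perp\ind{t,k}\|\le R$ with $x_\perp\ind{t,k}$ determined by the history. The standard sphere tail bound then gives
\[
\P\bigl(|\langle u_j,x\ind{t,k}\rangle|>\tfrac14\bigm|\text{history}\bigr)\le 2\exp\!\bigl(-c(d-T)/R^2\bigr)
\]
for some absolute constant $c>0$. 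A union bound over $t\le T/(2p)$, $k\in[K]$, and $j\in[T]$ (at most $KT^2/(2p)$ events) together with the assumed lower bound on $d$ shows that with probability at least $1-\delta/2$ the zero-respecting event holds at every round within the claimed horizon, i.e.\ $\prog(U^\top x\ind{t,k})\le J_{t-1}$ throughout.

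Conditioned on this event, the process $J_t$ satisfies exactly the hypotheses of the Chernoff-style argument used in the proof of \pref{lem:prob-zero-chain}: increments are in $\{0,1\}$, and by the probability-$p$ zero-chain property~\pref{eq:prob-zc} of $g$, $\P(J_t-J_{t-1}=1\mid\mathcal G_{t-1})\le p$. The same calculation (moment generating function $\le e^{pt(e-1)}\le e^{2pt}$ followed by Markov) yields $\P(J_t\ge T)\le e^{2pt-T}\le \delta/2$ for all $t\le (T-\log(2/\delta))/(2p)$. Combining with the previous step via a union bound gives the claim~\pref{eq:randomized_bounded}.

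\paragraph{Anticipated obstacles.}
The main subtlety is making the ``conditional on history, the remaining columns are uniform on the orthogonal complement'' statement rigorous in the presence of an adaptive algorithm: the queries $x\ind{t,k}$ may depend in complicated ways on $(u_1,\dots,u_{J_{t-1}})$, so one must carefully show that the leakage of information about $U$ through each oracle response is confined to columns of index at most $J_t$. The cleanest way I know to do this is to expose $U$ incrementally, revealing $u_{J_t+1},\dots$ only when forced to by a new non-zero coordinate in an oracle response, and to verify that the coupling between the ``revealed'' and ``unrevealed'' processes matches the true joint distribution. A secondary but important point is ensuring the $F$-value queries $\tilde F(x\ind{t,k})$ do not leak additional information beyond $(u_1,\dots,u_{J_{t-1}+1})$; this follows from the fact that the hard $F$ of \pref{eq:f_unscaled} is itself a (deterministic) zero-chain in both function value and gradient, but it is worth flagging as a structural ingredient the lemma tacitly relies on.
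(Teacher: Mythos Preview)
Your high-level decomposition---establish a ``zero-respecting'' event $\{\prog(U^\top x\ind{s,k})\le \gamma\ind{s-1}\text{ for all }s,k\}$ and then run the Chernoff argument of \pref{lem:prob-zero-chain} on the revealed-index process---matches the paper exactly. The gap is in the conditional-independence step. It is \emph{not} true that the history through round $t$ is determined by $(u_1,\dots,u_{J_t})$, the algorithm seed, and the $z$-seeds, even on the zero-respecting event: the oracle returns both $F(U^\top x)$ and $Ug(U^\top x,z)$, and while the \emph{support} of $g(U^\top x,z)$ is confined to $\{1,\dots,J_{s-1}+1\}$, the numerical \emph{values} of $F$ and of the nonzero entries of $g$ depend on the full vector $U^\top x$, hence on $\tri{u_j,x}$ for every $j$. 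For general $F$ (which is all the lemma assumes) this is immediate. Even for the specific $\Funscaled$ of \pref{eq:f_unscaled}, with $J_0=0$ the value $\Funscaled(U^\top x\ind{1,k})=-\Psi(1)\Phi(\tri{u_1,x\ind{1,k}})$ already leaks information about the unrevealed $u_1$; and in general $\grad_{i}\Funscaled(y)$ depends on $y_{i+1}$ through a $\Phi$-factor, so the observed coordinate $[g]_{J_{s-1}}$ leaks $\tri{u_{J_{s-1}+1},x\ind{s,k}}$ even on rounds with no progress. Consequently the unrevealed columns are \emph{not} conditionally Haar on the complement of $\spn(u_1,\dots,u_{J_t})$, and the incremental-exposure coupling you propose cannot reproduce the true law. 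This is not a bookkeeping issue; it is a real obstruction.

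The paper's remedy is to enlarge the conditioning $\sigma$-field to include all past inner-product vectors $U^\top x\ind{1},\dots,U^\top x\ind{i-1}$ in addition to $u_1,\dots,u_{j-1}$. On the event $\{\gamma\ind{i-1}<j\}$ this \emph{does} determine every past oracle response, and the paper shows (\pref{lem:rand-rot-invariance}) that the projection of $u_j$ onto the complement of $\spn\{u_1,\dots,u_{j-1},x\ind{1},\dots,x\ind{i-1}\}$ is then uniform on the sphere there. Because $x\ind{i,k}$ is correlated with $u_j$ through this conditioning, a single concentration bound on $\tri{u_j,x\ind{i,k}}$ is no longer available; instead the paper telescopes along the query sequence (\pref{lem:g-to-v}), writing $\tri{u_j,x\ind{i,k}}$ as a Cauchy--Schwarz combination of incremental projections and bounding each increment by $1/(4R\sqrt{t})$ via the spherical tail in dimension $\ge d-iK-j$. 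The extra $iK$ query directions that must be projected out are precisely why the dimension requirement in the lemma scales with $KT/p$ rather than merely with $T$.
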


Applying \pref{lem:randomized_bounded} to the hard instance 
$(\Funscaled,\gunscaled)$ defined in Eq.~\pref{eq:basic-construction} and 
\pref{eq:pairwise-construction} provides the lower bound we want, but 
restricted to algorithms with bounded iterates. To handle unbounded
iterates, we follow \citet{carmon2019lower_i} and compose the
construction with a soft projection to a ball centered at the origin. Our final 
(unscaled) construction is
\begin{equation}
  \label{eq:compression}
  \Frandcomp(x) =\Funscaled(U^\trn\rho(x)) +
  \frac{\eta}{2}\nrm*{x}^{2},~~\text{where}~
  \rho(x)=\frac{x}{\sqrt{1+\nrm*{x}^{2}/R^2}},~\mbox{$R=230\sqrt{T}$, 
  and $\eta=1/5$.}
\end{equation}
 The
corresponding stochastic gradient estimator is %
\begin{equation}
  \label{eq:compression_oracles}
  \grandcomp(x,z)= J(x)^{\trn}U\,\gunscaled(U^\trn\rho(x),z)+\eta{}\cdot 
  x,%
\end{equation}
where $J(x)=\brk[\big]{\frac{\partial\rho_i(x)}{\partial{}x_j}}_{i,j}$ is the 
Jacobian 
of $\rho$.
The next lemma shows that this new construction is difficult for any
algorithm in $\AlgRand$. The lemma has two components: First, since
the iterates always satisfy
$\nrm*{\rho(x\ind{t,k})}\leq{}R$, we can apply
\pref{lem:randomized_bounded} to this sequence to control progress. Second, the additional regularization term in \pref{eq:compression} ensures that we cannot
make the gradient small by increasing the norm, so low progress indeed
implies large gradient.
\newcommand{\Fcompgeneric}{\wh{F}_U}
\newcommand{\gcompgeneric}{\wh{g}_U}
\begin{lemma}
  \label{lem:randomized_unbounded}
  Let $\oracle$ be any oracle with 
  $\oracle_{\Frandcomp}(x,z)=(\Frandcomp(x),\grandcomp(x,z))$, where 
  $\Frandcomp$ is the compressed and rotated hard instance 
  \pref{eq:compression} and $\grandcomp$ is the corresponding 
  probability-$p$ zero chain \pref{eq:compression_oracles}. Let 
  $\delta\in(0,1)$, $d\geq{}\ceil{18\cdot230^2
    \frac{KT^2}{p}\log \frac{2KT^2}{p\delta}}$, and $U$ be uniformly 
  distributed on $\Ortho(d,T)$. Then for any $\alg\in\AlgRand(K)$,   
  with probability
  at least $1-\delta$,
  \begin{equation}
    \label{eq:randomized_unbounded}
    \min_{k\in\brk*{K}}\,\nrm[\Big]{\grad{}\Frandcomp(x\ind{t,k}_{
  	\alg\brk{\oracle_{\Frandcomp}}})}\geq{}\frac{1}{2}~~\text{for 
      all}~~t 
    \leq{}\frac{T-\log\frac{2}{\delta}}{2p}.
  \end{equation}
\end{lemma}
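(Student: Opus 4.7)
My plan is to combine the progress bound of \pref{lem:randomized_bounded} with a deterministic gradient lower bound on the compressed instance. The reduction rests on the identities
\begin{equation*}
\Frandcomp(x)=\Frandgeneric(\rho(x))+\frac{\eta}{2}\nrm*{x}^{2},\qquad \grandcomp(x,z)=J(x)^{\trn}\grandgeneric(\rho(x),z)+\eta x,
\end{equation*}
which show that every response of $\oracle_{\Frandcomp}$ at $x$ can be reconstructed from a single query-response pair of $\oracle_{\Frandgeneric}$ at $\rho(x)$ by $U$-independent post-processing. Hence any $\alg\in\AlgRand(K)$ interacting with $\oracle_{\Frandcomp}$ is equivalent to an algorithm $\alg'\in\AlgRand(K)$ interacting with $\oracle_{\Frandgeneric}$ whose batches are $\crl*{\rho(x\ind{t,k}_{\alg[\oracle_{\Frandcomp}]})}_{k\in[K]}$. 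Since $\nrm*{\rho(x)}\leq R=230\sqrt{T}$ uniformly, I invoke \pref{lem:randomized_bounded} with this radius---the factor $18\cdot 230^{2}$ in our dimension assumption is exactly what it produces---to obtain that with probability at least $1-\delta$, $\max_{k\in[K]}\prog\prn*{U^{\trn}\rho(x\ind{t,k}_{\alg[\oracle_{\Frandcomp}]})}<T$ for every $t\leq(T-\log(2/\delta))/(2p)$.

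Conditional on this event, I plan to establish the deterministic bound $\nrm*{\grad\Frandcomp(x)}\geq 1/2$ for every $x$ with $\prog(y)<T$, where $y\defeq U^{\trn}\rho(x)$. By \pref{lem:deterministic-construction}.\ref{item:large-grad} (using $\prog[1](y)\leq\prog(y)<T$), $\abs*{\grad_{j^{\star}}\Funscaled(y)}>1$ for $j^{\star}\defeq\prog[1](y)+1$, and by definition $\abs*{y_{j^{\star}}}\leq 1$. Writing $\alpha\defeq 1+\nrm*{x}^{2}/R^{2}$, a direct computation gives $J(x)=\alpha^{-1/2}\prn*{I-\alpha^{-1}xx^{\trn}/R^{2}}$ with singular values $\alpha^{-1/2}$ and $\alpha^{-3/2}$. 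I split on whether $\alpha\geq 5/4$ (i.e.\ $\nrm*{x}\geq R/2$). In the large-norm branch, the triangle inequality together with $\nrm*{J(x)^{\trn}U\grad\Funscaled(y)}\leq\alpha^{-1/2}\cdot 23\sqrt{T}$ (via \pref{lem:deterministic-construction}.\ref{item:grad}) yields $\nrm*{\grad\Frandcomp(x)}\geq\eta\nrm*{x}-23\sqrt{T}/\sqrt{\alpha}\geq 1/2$, courtesy of the calibration $\eta=1/5$, $R=230\sqrt{T}$.

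In the small-norm branch $\alpha\leq 5/4$, I lower bound $\nrm*{\grad\Frandcomp(x)}$ via its inner product with the unit vector $Ue_{j^{\star}}$. Using $J(x)Ue_{j^{\star}}=\alpha^{-1/2}Ue_{j^{\star}}-\alpha^{-1}y_{j^{\star}}x/R^{2}$ and $U^{\trn}x=\sqrt{\alpha}\,y$, I get
\begin{equation*}
\langle\grad\Frandcomp(x),Ue_{j^{\star}}\rangle=\frac{\grad_{j^{\star}}\Funscaled(y)}{\sqrt{\alpha}}-\frac{y_{j^{\star}}\langle\grad\Funscaled(y),y\rangle}{\sqrt{\alpha}\,R^{2}}+\eta\sqrt{\alpha}\,y_{j^{\star}}.
\end{equation*}
With $\abs*{\langle\grad\Funscaled(y),y\rangle}\leq 23\sqrt{T}\cdot R$ and $\abs*{y_{j^{\star}}}\leq 1$, the last two terms have magnitude at most $1/(10\sqrt{\alpha})$ and $\sqrt{\alpha}/5$; combined with $\abs*{\grad_{j^{\star}}\Funscaled(y)}>1$ this produces $\abs*{\langle\grad\Frandcomp(x),Ue_{j^{\star}}\rangle}\geq 9/(10\sqrt{\alpha})-\sqrt{\alpha}/5\geq 1/2$ for all $\alpha\leq 5/4$. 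The main obstacle is precisely this small-norm bookkeeping: because $J(x)^{\trn}$ contracts non-uniformly and the regularizer $\eta x$ has no preferred coordinate direction, I have to show that the single frontier coordinate $\grad_{j^{\star}}\Funscaled(y)$ survives against both the Jacobian cross-correction and the regularization---a balance that only works for the specific calibration $R=230\sqrt{T}$, $\eta=1/5$ hard-coded into the lemma.
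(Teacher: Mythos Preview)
Your proposal is correct and follows essentially the same route as the paper: you reduce to \pref{lem:randomized_bounded} by viewing the queries $\rho(x\ind{t,k})$ as those of an auxiliary bounded-norm algorithm interacting with $\oracle_{\Frandgeneric}$, and then perform the same case split at $\nrm*{x}=R/2$, using the regularizer to dominate in the large-norm branch and the inner product with $Ue_{j^{\star}}$ (for $j^{\star}=\prog[1](y)+1$) in the small-norm branch. The only cosmetic difference is bookkeeping: the paper immediately plugs in the worst-case $\alpha=5/4$ and uses the tighter $\nrm*{y}\leq R/2$ in the cross term, whereas you carry $\alpha$ symbolically and use the looser $\nrm*{y}\leq R$; both yield the $1/2$ threshold.
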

\noindent
(See \pref{app:proof-randomized-bounded} for a proof.)

All that remains is to verify that the final constructions \pref{eq:compression}
and \pref{eq:compression_oracles} still satisfy the various
boundedness properties required for the lower bound. The following
bounds are a consequence of a generic result about
rotation and soft projection, which we prove in \pref{app:proof-compression}.
\begin{lemma}
\label{lem:compression}
The function $\Frandcomp$ and stochastic gradient function 
$\grandcomp$ satisfy the
following properties for all $U\in\Ortho(d,T)$.
\begin{enumerate}
\item $\Frandcomp(0) - \inf_{x}\Frandcomp(x) \leq \Delta_0T$, where 
$\Delta_0=12$.
\item The first derivative of $\Frandcomp$ is $\ls_1$-Lipschitz  
continuous, where $\ell_1 = 155$.
\item $\En\nrm*{\grandcomp(x,z)-\grad{}\Frandcomp(x)}^{2}\leq{}\frac{\varsigma^2(1-p)}{p}$ for all
  $x\in\bbR^{d}$, where $\varsigma=23$.
\item $\En\nrm*{\grandcomp(x,z)-\grandcomp(y,z)}^{2}\leq{}\frac{\bar{\ls}_1^2}{p}\cdot\nrm*{x-y}^{2}$ for all
  $x,y\in\bbR^{d}$, where $\lipBar{1} = 336$.
\end{enumerate}
\end{lemma}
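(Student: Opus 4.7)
The plan is to reduce Lemma \pref{lem:compression} to a generic claim about how Lipschitz-type properties of a function $F$ and stochastic gradient estimator $g$ are preserved under the transformation $\wh{F}_U(x) = F(U^\trn \rho(x)) + \frac{\eta}{2}\|x\|^2$ and $\wh{g}_U(x,z) = J(x)^\trn U\,g(U^\trn \rho(x),z) + \eta x$. The only facts I need about the soft projection $\rho(x) = x/\sqrt{1+\|x\|^2/R^2}$ are, via a direct calculation: (i) $\rho(0)=0$ and $\|\rho(x)\| \leq R$; (ii) its Jacobian satisfies $\|J(x)\|_{\op} \leq 1$ uniformly; and (iii) $J$ is itself Lipschitz as a map into operator norm, with constant of order $1/R$. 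Combined with $U \in \Ortho(d,T)$ being a partial isometry (so $\|U v\| = \|v\|$ for $v \in \bbR^T$ and $U^\trn U = I_T$), rotation contributes no blowup and the soft projection contributes a controlled blowup scaling like $\sup \|\grad F\|_\infty / R$.

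I would dispatch items 1 and 3 first, as they are essentially immediate. For item 1, $\rho(0)=0$ gives $\wh{F}_U(0) = \Funscaled(0)$, and non-negativity of the quadratic regularizer gives $\inf_x \wh{F}_U(x) \geq \inf_{y\in\bbR^T} \Funscaled(y)$, so the suboptimality gap is at most $\Delta_0 T$ by \pref{lem:deterministic-construction}.\pref{item:val}. For item 3, the error decomposes as $\wh{g}_U(x,z) - \grad \wh{F}_U(x) = J(x)^\trn U(\gunscaled(U^\trn \rho(x),z) - \grad \Funscaled(U^\trn \rho(x)))$, and $\|J(x)^\trn U\|_{\op} \leq 1$, so the variance bound of \pref{lem:pzc-pair} transfers verbatim with the same constant $\varsigma = 23$.

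Items 2 and 4 are the main obstacle. For item 4, I would write
\begin{align*}
\wh{g}_U(x,z) - \wh{g}_U(y,z)
&= \eta(x-y) \\
&\quad + [J(x)^\trn - J(y)^\trn]\, U\, \gunscaled(U^\trn \rho(y),z) \\
&\quad + J(x)^\trn U\,[\gunscaled(U^\trn \rho(x),z) - \gunscaled(U^\trn \rho(y),z)]
\end{align*}
and bound each term by its $L^2(\mathsf{P}_z)$ norm via Minkowski's inequality. The first term gives $\eta \|x-y\|$; the third is controlled via $\|J(x)^\trn U\|_{\op} \leq 1$, the $1$-Lipschitz property of $U^\trn \rho$, and the mean-squared smoothness of $\gunscaled$ from \pref{lem:pzc-pair}, yielding $(\lipBar{1}/\sqrt{p})\|x-y\|$; the middle term uses the $O(1/R)$ Lipschitzness of $J$ in operator norm together with a uniform bound on $\E \|\gunscaled(w,z)\|^2$, which by the variance bound of \pref{lem:pzc-pair} and the gradient sup-norm bound $\gamma_\infty = 23$ of \pref{lem:deterministic-construction}.\pref{item:grad} is at most $23^2 T + \varsigma^2(1-p)/p$. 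Item 2 follows by the same decomposition, using instead the deterministic Lipschitz constant $\ell_1 = 152$ of \pref{lem:deterministic-construction}.\pref{item:lip}.

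The main challenge will be bookkeeping the numerical constants to reach exactly $\ell_1 = 155$ and $\lipBar{1} = 336$. The choice $R = 230\sqrt{T}$ is calibrated so that the $1/R$ Lipschitz constant of $J$ cancels against the potential $O(\sqrt{T})$ growth of $\|\gunscaled\|$ coming from the fact that $\grad \Funscaled$ may have up to $T$ nonzero coordinates each bounded by $\gamma_\infty = 23$. Concretely, the middle-term contribution to the Lipschitz constant is of order $(C/R)\cdot \gamma_\infty \sqrt{T} = C \cdot 23/230 \approx C/10$ for a small constant $C$ controlling the Lipschitzness of $J$, which together with the $\eta = 1/5$ regularization term accounts for the small gap between $152$ and $155$ (respectively $328$ and $336$) over the base constants of the unrotated construction.
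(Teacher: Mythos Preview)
Your proposal is correct and matches the paper's approach: the paper packages your facts (i)--(iii) about $\rho$ and its Jacobian into an auxiliary lemma (\pref{lem:compression_smooth}), proves a generic transfer result (\pref{lem:compression_generic}) via the same three-term decomposition you describe, and then adds back the $\eta\|x\|^2/2$ regularizer. The only cosmetic differences are that the paper uses $(a+b)^2\le 2a^2+2b^2$ where you use Minkowski, and handles the regularizer separately rather than carrying $\eta(x-y)$ through the decomposition.
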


\section{Extensions}
\label{sec:extensions}

While \pref{thm:main_randomized} constitutes our main technical result, implying
lower bounds for methods using stochastic first-order information,
it is interesting to extend the bounds to allow more sophisticated querying
strategies and more informative oracles.

\subsection{Statistical learning oracles}\label{sec:statistical}
To this point, our assumptions on the stochastic gradient function $g(x,z)$
concern only its first and second moments (requirements~\pref{eq:g_oracle}
and~\pref{eq:mss}).  Yet the oracles in statistical learning and
stochastic approximation problems often have the common structural
property that $g(x,z)$ is the gradient of a function.  Here we show that
this property does not improve the worst-case complexity of stochastic
optimization. Specifically, we consider oracles specified by a function
$f:\bbR^{d}\times{}\cZ\to\bbR$ for which
\begin{equation}
  \label{eq:saa}
F(x)=\En\brk*{f(x,z)}\quad\text{and}\quad g(x,z)=\grad_x f(x,z).
\end{equation}
All of the lower bounds in this paper extend to this setting, at
the cost of a slightly more
involved construction. The idea is the same as in the preceding
construction, but to construct a valid function $f(x,z)$ with
$F(x)=\En\brk*{f(x,z)}$ we apply the smoothed progress function to the
function value for $\Funscaled$ rather than the gradient. Letting
$\cZ=\crl*{0,1}$ be the oracle seed space, we define 
\begin{equation}
\label{eq:f_SAA}
\funscaled(x,z) =
-\Psi(1)\Phi(x_1)\noisingfunc_1(x,z) +
\sum_{i=2}^{T}\brk*{\Psi(-x_{i-1})\Phi(-x_i) -
	\Psi(x_{i-1})\Phi(x_i)}\,\noisingfunc_i(x,z) ,
\end{equation}
where $\noisingfunc_i(x,z)$ the smoothed
indicator~\pref{eq:pairwise-construction} and the random seed
$z\sim\mathrm{Bernoulli}(p)$. %
It is immediate that $\En\brk*{\funscaled(x,z)}=\Funscaled(x)$. The
stochastic gradient function $\grad_x \funscaled(x,z)$ has a similar
form to our previous construction $\gunscaled(x,z)$, but with 
nuisance terms arising from the gradient of the soft progress
function itself. The thrust of the analysis for the new construction
is to show that these nuisance terms do not spoil the key properties of
$\gunscaled$.
\begin{lemma}
  \label{lem:pzc-saa}
  The stochastic gradient function $\grad_x\funscaled$ is a
  probability-$p$ zero-chain, is unbiased for  $\grad \Funscaled$, and 
  for numerical constants $\varsigma$ and $\lipBar{1}$ independent of $p$ 
  and $T$ satisfies 
  \begin{equation}\label{eq:pzc-saa-var}
    \E \norm{\grad\funscaled(x,z) - \grad \Funscaled(x)}^2 \le 
    \frac{\varsigma^2}{p}
  \end{equation}
  and
  \begin{equation}\label{eq:pzc-saa-mss}
    \E \norm{\grad\funscaled(x,z) - \grad\funscaled(y,z) }^2 \le 
    \frac{\lipBar{1}^2}{p} \norm{x-y}^2
  \end{equation}
  for all $x,y\in\bbR^{T}$.
      \end{lemma}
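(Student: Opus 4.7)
The plan is to exploit the additive decomposition $\funscaled(x,z) = \Funscaled(x) + (z/p - 1)H(x)$, where $H(x) \defeq \sum_{j} h_j(x)\softindfunc_j(x)$ with $h_1(x) = -\Psi(1)\Phi(x_1)$ and $h_j(x) = \Psi(-x_{j-1})\Phi(-x_j) - \Psi(x_{j-1})\Phi(x_j)$ for $j \ge 2$, so that $\Funscaled = \sum_j h_j$ and $\grad_x \funscaled(x,z) - \grad \Funscaled(x) = (z/p - 1)\,\grad H(x)$. Unbiasedness is then immediate from $\E[z/p - 1] = 0$. The central structural claim I would establish is that $H$ is effectively a single-summand sum at every $x$: $h_j(x) \ne 0$ forces $j \le \prog[1/2](x)+1$ (because $\Psi \equiv 0$ on $(-\infty, 1/2]$), whereas $\softindfunc_j(x) > 0$ forces $\|\threshfunc(|x_{\ge j}|)\| < 3/4$, which fails as soon as some $|x_k| \ge 1/2$ for $k \ge j$; combining these pins down the unique active index $j = \prog[1/2](x)+1$.

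I would next verify the two zero-chain properties. For the standard condition, I show $[\grad H(x)]_i = 0$ whenever $i > \prog(x)+1$: the four term types in $\partial_i H$---namely $(\partial_i h_i)\softindfunc_i$, $h_i(\partial_i \softindfunc_i)$, $(\partial_i h_{i+1})\softindfunc_{i+1}$ (using $\partial_i \softindfunc_{i+1}=0$), and $\sum_{j<i} h_j(\partial_i \softindfunc_j)$---all vanish because $|x_{i-1}|, |x_i| \le 1/4$ forces $\Psi = \Psi' = 0$ by \pref{lem:deterministic-construction} while $\threshfunc'(|x_i|) = 0$ by \pref{obs:thresh} kills every $\partial_i \softindfunc_j$ with $j \le i$. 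Combined with the zero-chain property of $\grad\Funscaled$, this gives $\prog[0](\grad_x\funscaled(x,z)) \le \prog(x)+1$ for every realization of $z$. For the probabilistic condition I handle the critical coordinate $i = \prog(x)+1$ when $z=0$: since every $|x_k| \le 1/4$ for $k \ge i$ one has $\softindfunc_i(x) = \softindfunc_{i+1}(x) = 1$, and every $\partial_i \softindfunc_j$ with $j \le i$ vanishes for the same reason as above, so $[\grad H(x)]_i = \partial_i h_i + \partial_i h_{i+1} = [\grad\Funscaled(x)]_i$. Hence $[\grad_x\funscaled(x,0)]_i = 0$ and $\prog[0](\grad_x\funscaled(x,0)) \le \prog(x)$, so the bad event in \pref{def:zero-chain} only occurs when $z=1$, which happens with probability $p$.

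For the variance bound I compute $\E\|\grad_x\funscaled(x,z) - \grad\Funscaled(x)\|^2 = \frac{1-p}{p}\|\grad H(x)\|^2$ and reduce to bounding $\|\grad H(x)\|$ uniformly. A sharper accounting of the above shows that $\grad H(x)$ is supported on at most the two coordinates $\{\prog[1/2](x),\,\prog[1/2](x)+1\}$, since any other $i$ forces every surviving term to have either $h_j = 0$ or $\threshfunc'$ evaluated outside $(1/4,1/2)$. Each nonzero coordinate is then bounded by a universal constant depending on $\sup|\Psi|$, $\sup|\Psi'|$, $\sup|\Phi|$, $\sup|\Phi'|$ and the derivative bounds on $\threshfunc$ from \pref{obs:thresh}. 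For mean-squared smoothness, the decomposition gives $\E\|\grad\funscaled(x,z)-\grad\funscaled(y,z)\|^2 \le 2\ls_1^2\|x-y\|^2 + \frac{2(1-p)}{p}\|\grad H(x)-\grad H(y)\|^2$ via \pref{lem:deterministic-construction}.\ref{item:lip} and $(a+b)^2 \le 2a^2 + 2b^2$, so it remains to bound $\|\nabla^2 H\|_{\op}$ independently of $T$. This last step is the main obstacle: although $\grad H$ is two-sparse at each fixed $x$, the support shifts with $\prog[1/2](x)$, so I would verify entrywise that $\nabla^2 H$ depends on only $O(1)$ coordinates through $\Psi,\Phi \in \mc{C}^\infty$ and the $\threshfunc''$ bound from \pref{obs:thresh}, then convert the entrywise bound to an operator-norm bound using the sparsity of the support.
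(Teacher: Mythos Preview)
Your decomposition $\funscaled(x,z)=\Funscaled(x)+(z/p-1)H(x)$ with $H=\sum_j h_j\Theta_j$ is clean and equivalent to the paper's setup, and your unbiasedness and zero-chain arguments are correct. The gap is in the sparsity claim that underlies your variance and mean-squared smoothness bounds: $\nabla H(x)$ is \emph{not} supported on $\{\prog[1/2](x),\,\prog[1/2](x)+1\}$. Writing $i_x=\prog[1/2](x)+1$, the term $h_{i_x}(x)\,\partial_i\Theta_{i_x}(x)$ is nonzero for every $i\ge i_x$ with $|x_i|\in(1/4,1/2)$, and there can be arbitrarily many such coordinates. Concretely, take $x_1=1$ and $x_2=\cdots=x_{k}=a$ with $a\in(1/4,1/2)$ chosen so that $\sqrt{k-1}\,\Gamma(a)\in(1/2,3/4)$; then $i_x=2$, yet $[\nabla H(x)]_i=h_2(x)\partial_i\Theta_2(x)\neq 0$ for all $i\in\{2,\ldots,k\}$. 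So ``each nonzero coordinate is bounded by a constant'' would only give $\|\nabla H\|^2=O(T)$, and the same obstruction breaks the $O(1)$-sparsity plan for $\nabla^2 H$.

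The repair is to bound $\|\nabla\Theta_{i_x}\|$ and the Lipschitz constant of $\nabla\Theta_{i_x}$ directly, without sparsity. From \pref{eq:grad_theta} one has
\[
\|\nabla\Theta_j(x)\|^2
=\Gamma'\!\bigl(1-\|\Gamma(|x_{\ge j}|)\|\bigr)^2\,
\frac{\sum_{i\ge j}\Gamma(|x_i|)^2\Gamma'(|x_i|)^2}{\|\Gamma(|x_{\ge j}|)\|^2}
\le 36\cdot\frac{36\,\|\Gamma(|x_{\ge j}|)\|^2}{\|\Gamma(|x_{\ge j}|)\|^2}=36^2,
\]
the crucial point being the cancellation of $\|\Gamma(|x_{\ge j}|)\|^2$, not any support restriction. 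This gives $\|\nabla H\|\le O(1)$ and hence the variance bound. For mean-squared smoothness you analogously need that $\nabla\Theta_j$ is $O(1)$-Lipschitz; the paper proves this (and the norm bound) as \pref{lem:theta_properties}. With that lemma in hand, your identity $\nabla^2 H(x)=(\nabla^2 h_{i_x})\Theta_{i_x}+\nabla h_{i_x}(\nabla\Theta_{i_x})^\top+\nabla\Theta_{i_x}(\nabla h_{i_x})^\top+h_{i_x}\nabla^2\Theta_{i_x}$ (which does hold, since all other summands vanish together with their first and second derivatives) yields the $T$-independent operator-norm bound you want.
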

      We prove \pref{lem:pzc-saa} in \pref{app:extensions}. With the lemma in hand, all that is required to prove the
       $\Omega(\tfrac{\Delta{}L\sigma^{2}}{\veps^{4}})$ lower
      bound for the \pop setting and the 
      $\Omega(\tfrac{\Delta{}L\sigma}{\veps^{3}}+\frac{\sigma^{2}}{\veps^{2}})$
      lower bound for the \mss{} setting is to compose the instance
      with a rotation and soft projection as in \pref{eq:compression},
      then rescale as in \pref{thm:main_randomized}. This leads to the following result.
      \begin{proposition}
        \pref{thm:main_randomized} holds (with different numerical 
        constants) even when restricting the oracle class to statistical 
        learning-type stochastic gradient
        functions of the form \pref{eq:saa}.
      \end{proposition}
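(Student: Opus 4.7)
The plan is to replicate the proof of \pref{thm:main_randomized} verbatim, substituting the statistical-learning instance $(\Funscaled, \funscaled)$ from \pref{eq:f_SAA} for the pair $(\Funscaled, \gunscaled)$, and using \pref{lem:pzc-saa} as the analogue of \pref{lem:pzc-basic}/\pref{lem:pzc-pair}. Concretely, I would define the compressed, rotated statistical-learning instance
\begin{equation*}
\frandcomp(x,z) \defeq \funscaled(U^\trn \rho(x), z) + \frac{\eta}{2}\nrm*{x}^{2},
\end{equation*}
using the same soft projection $\rho$, radius $R$, regularization $\eta$, and random $U\sim\mathrm{Uniform}(\Ortho(d,T))$ as in \pref{eq:compression}. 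By linearity of expectation and unbiasedness from \pref{lem:pzc-saa}, $\En[\frandcomp(x,z)] = \Frandcomp(x)$. Hence the natural stochastic gradient estimator $\grandcomp(x,z) \defeq \grad_x \frandcomp(x,z) = J(x)^\trn U\, \grad_x \funscaled(U^\trn\rho(x),z) + \eta x$ has exactly the statistical-learning form \pref{eq:saa} and is unbiased for $\grad \Frandcomp$.

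Next, I would establish the analogue of \pref{lem:compression} for this pair. The properties of $\Frandcomp$ itself (bounded initial suboptimality and Lipschitz gradient) are unchanged and follow directly from \pref{lem:compression}, since $\Frandcomp$ has exactly the same definition as in the original construction. For the variance bound \pref{eq:pzc-saa-var} and the mean-squared smoothness bound \pref{eq:pzc-saa-mss} on $\grandcomp$, I would combine the corresponding bounds of \pref{lem:pzc-saa} with the facts that orthogonal rotations preserve Euclidean norm and that both $\rho$ and its Jacobian $J$ are bounded and Lipschitz on $\R^d$, as established inside the proof of \pref{lem:compression}. The additive regularization $\eta x$ is deterministic and contributes nothing to either second-moment inequality.

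With the rotated statistical-learning instance satisfying the analogue of \pref{lem:compression}, \pref{lem:randomized_unbounded} applies unchanged (its hypothesis is only that $\grad_x \funscaled$ is a probability-$p$ zero-chain, which is exactly the content of \pref{lem:pzc-saa}) and yields $\min_{k\in[K]}\nrm{\grad \Frandcomp(x\ind{t,k})} \ge 1/2$ for all $t \le (T-\log(2/\delta))/(2p)$ with probability at least $1-\delta$. Finally, I would rescale using the same argument as in the proof of \pref{thm:zr_population} (taking $\lambda \propto \eps/L$ and $p \propto \eps^2/\sigma^2$) to obtain the $\Omega(\Delta L \sigma^2 \eps^{-4})$ bound for the \pop setting, and as in the proof of \pref{thm:mean_smooth_zero_respecting} (taking $\lambda \propto \sigma/\bar{L}$) to obtain the $\Omega(\Delta \bar{L}\sigma\eps^{-3} + \sigma^2\eps^{-2})$ bound for the \mss setting.

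The main obstacle is propagating \pref{eq:pzc-saa-mss} through the composition with $\rho$: showing that $x \mapsto J(x)^\trn U\, \grad_x \funscaled(U^\trn \rho(x), z)$ is $O(\bar\ell_1/\sqrt{p})$-mean-squared smooth requires combining the MSS property of $\grad_x \funscaled$ with the Lipschitz continuity of both $\rho$ and $J$, plus a uniform bound on $\En\nrm{\grad_x \funscaled(\cdot,z)}^2$. This parallels the corresponding step in the proof of \pref{lem:compression} and uses no new ideas, only careful bookkeeping of constants.
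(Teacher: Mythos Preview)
Your proposal is correct and follows essentially the same route as the paper: the paper's argument is literally the one-line sketch ``compose the instance with a rotation and soft projection as in \pref{eq:compression}, then rescale as in \pref{thm:main_randomized},'' and you have spelled out precisely those steps. The obstacle you flag---propagating mean-squared smoothness through the composition with $\rho$---is exactly what the paper handles via the generic \pref{lem:compression_generic}, whose proof already accommodates any gradient estimator satisfying the moment bounds of \pref{lem:pzc-saa} (indeed, the proof of \pref{lem:compression} explicitly cites \pref{lem:pzc-saa} alongside \pref{lem:pzc-pair}).
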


\newcommand{\gunscaledCo}{g_{T}^{\mathrm{coord}}}

\subsection{Active oracles}
Our main results consider a model in which the algorithm performs batches 
of $K$ simultaneous queries, but the random seed $z$ is drawn i.i.d.\ once 
per batch.
Another stronger model allows \emph{active} oracles, where the queries
consist of both a point $x$ and a seed $z$~\citep{SchmidtLeBa11,
  ShalevZh13, DefazioBaLa14, woodworth2016tight,lei2017non,
  fang2018spider,zhou2020stochastic}. Active oracles are essential to
finite-sum optimization problems where $F(x)=\sum_{i=1}^{n}f_i(x)$ and are
more general than our $K$-query oracles, since a randomized algorithm can
simulate a $K$-query oracle using an active oracle by drawing $z\sim P_z$
and querying $(x\ind{1},z),\ldots,(x\ind{K},z)$.
For \emph{convex} finite-sum minimization problems, 
stochastic oracles are significantly weaker than active oracles 
\citep{arjevani2017limitations}. Nevertheless, in this section we show
that our $\eps^{-4}$ lower bound for
zero-respecting algorithms (\pref{thm:zr_population}) extends to active 
oracles, even with additional finite-sum structure ($\cZ$ is finite, $P_z$ is 
uniform). We believe further extensions for randomized algorithms, \mss 
gradient estimators and statistical learning oracles are straightforward, but 
we omit them 
for brevity.

The precise active oracle model we consider is as follows: at 
round
$i$, the algorithm proposes a point $x\ind{i}$ and seed $z\ind{i}$ and
receives an oracle response $(F(x\ind{i}), g(x\ind{i},z\ind{i})) = \oracleF(x\ind{i},z\ind{i})$. As before,
we assume that the stochastic gradients are unbiased and have variance
bounded by $\sigma^{2}$, and we allow the algorithm to know the
distribution $P_z$.

The key step in converting our basic probabilistic zero-chain 
construction~\eqref{eq:basic-construction} to achieve a lower bound
for the active finite-sum setting 
is to allow for independent randomness in each of the chain coordinates; 
this safeguards against algorithms that ``abuse'' the 
active  oracle by repeatedly querying the same (informative) value of $z$.
More formally, we take $\crl*{0,1}^{T}$ to be the oracle seed 
space and consider the stochastic gradient function
$g_{T}^{\mathrm{coord}}:\R^T 
\times \crl*{0,1}^{T}\to\R^T$,
\begin{align} \label{eq:basic-construction-coord}
\brk*{\gunscaledCo(x,z)}_i \defeq \grad_i \Funscaled(x) \cdot 
\prn*{1+\indicator{i > \prog(x)}\prn*{\frac{z_i}{p}-1}};
\end{align}
the only difference compared to the passive construction~\pref{eq:basic-construction} is that 
the seed $z=(z_1,\ldots,z_T)$ is now a vector of $T$ bits, and we use the
$i$th bit only for coordinate $i$ of the stochastic gradient function. If we draw the bits 
of $z$ i.i.d.\ from a Bernoulli$(p)$ distribution, then $\gunscaledCo$ is 
unbiased for $\grad \Funscaled$ and satisfies the variance bound in 
\pref{lem:pzc-basic}. 

The next step is to convert the distribution over $z\in\{0,1\}^T$ into a 
uniform distribution over a larger set, so that the instance has
finite-sum structure. To do so, we assume without loss of 
generality that $p=1/N$ for $N\in\N$ (we can always round $1/p=c\cdot 
\sigma^2/\epsilon^2$ appropriately).  We choose $\cZ = 
\{1,...,N^T\}$ as the seed space and define $\zeta:\cZ\to \{0,1\}^T$ as 
\begin{equation*}
\zeta_j(k)\defeq\indicator{\text{the $j$th digit of $k$ in the $N$-ary basis 
		is 0}}.
\end{equation*}
To obtain the hard active oracle construction, we 
take
\begin{equation*}
g_\pi(x;i) \defeq \gunscaledCo(x,\zeta(\pi(i))),
\end{equation*}
where $\pi$ is any permutation of $N^T$ elements. Note that for any 
choice of the permutation $\pi$, the random function $g_\pi(\cdot; i)$ with $i$ uniform in $\cZ$ has 
the same 
distribution as $\gunscaledCo(\cdot; z)$ with the elements of 
$z$ i.i.d.  Bernoulli$(p)$, 
and therefore $g_\pi$ is also unbiased for $\grad \Funscaled$ and satisfies the 
variance bound in \pref{lem:pzc-basic}. By choosing $\pi$ to be a random permutation, the active oracle 
corresponding to $g_\pi$ satisfies a progress bound analogous to \pref{lem:prob-zero-chain}.
\begin{lemma}
	\label{lem:prob-zero-chain-active}
	Let $\delta\in(0,1)$, let $N,T>1$ be integers, let $\pi$ be a random 
	permutation of 
	$N^T$ elements and consider the active oracle 
	$\oracle_{\Funscaled}^\pi(x,i)=(\Funscaled(x), g_\pi(x;i))$. Let 
	$\{x\ind{i}\}$ be the iterates of any zero-respecting algorithm 
	interacting with $\oracle_{\Funscaled}^\pi$. Then, for $p=1/N$, with 
	probability at 
	least $1-\delta$ over the random 
	choice of $\pi$,
	\begin{equation*}
	\prog[0]\prn*{
		x\ind{t}
	} < T, ~~\text{for all ~} t \leq{} \frac{T-\log(1/\delta)}{4p}.
	\end{equation*}
      \end{lemma}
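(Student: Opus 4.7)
The plan is to mirror the structure of the proof of \pref{lem:prob-zero-chain}, showing that the adversarial choice of seed $i^{(t)}$ against a uniformly random permutation $\pi$ at most doubles the per-step probability of coordinate activation. Define a filtration $\cG^{(t)}$ capturing all information available to the algorithm (including $\pi$ insofar as it is revealed) after the first $t$ rounds, and let $\gamma^{(t)} \defeq \max_{s \leq t} \prog[0](g^{(s)})$ be the cumulative gradient progress. By the zero-respecting property and the zero-chain property of $\gunscaledCo$, we have $\prog[0](x^{(t)}) \leq \gamma^{(t-1)}$ and $\gamma^{(t)} - \gamma^{(t-1)} \in \{0,1\}$ almost surely. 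It suffices to show $\P(\gamma^{(t)} \geq T) \leq \delta$ for $t \leq (T - \log(1/\delta))/(4p)$.

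The main step is to show $\P(\gamma^{(t)} - \gamma^{(t-1)} = 1 \mid \cG^{(t-1)}) \leq 2p$. Set $j = \gamma^{(t-1)} + 1$; as in the proof of \pref{lem:pzc-basic}, increasing $\gamma$ requires $\zeta_j(\pi(i^{(t)})) = 1$ together with a nonzero gradient at coordinate $j$. Past queries $i^{(s)}$ with $\prog[0](x^{(s)}) \geq j - 1$ fully revealed the $j$-th bit of $\zeta(\pi(i^{(s)}))$, and this bit must have been $0$, since otherwise $\gamma^{(s)} \geq j$, contradicting $\gamma^{(s)} \leq \gamma^{(t-1)} = j - 1$. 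If $i^{(t)}$ repeats such a past query, the bit is deterministically $0$ and the increment is zero.

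Otherwise, we appeal to the symmetry of $\pi$: conditional on the observed information, the distribution of $\pi(i^{(t)})$ is uniform on the complement of the (at most $t-1$) values already pinned down by past exact observations. Since exactly $N^{T-1}$ elements of $\{1,\ldots,N^T\}$ have $j$-th digit equal to $0$, the conditional probability of bit-$j$ activation is at most $N^{T-1}/(N^T - (t-1)) \leq 2p$ provided $t \leq N^T/2$, which holds vacuously in the relevant regime. Combining with the zero-chain structure then gives $\P(\gamma^{(t)} - \gamma^{(t-1)} = 1 \mid \cG^{(t-1)}) \leq 2p$. Following the moment generating function argument from the proof of \pref{lem:prob-zero-chain} with $p$ replaced by $2p$ yields $\P(\gamma^{(t)} \geq T) \leq e^{4pt - T} \leq \delta$ for $t \leq (T - \log(1/\delta))/(4p)$, and we conclude via $\prog[0](x^{(t)}) \leq \gamma^{(t-1)}$.

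The main technical obstacle will be justifying the symmetry step cleanly, since the history contains two qualitatively different types of information about $\pi$: exact values at indices where bit $j$ was revealed, and partial constraints (other digits of $\pi(i^{(s)})$) at indices where bit $j$ was not. A clean way to handle both is to exploit the fact that, for a uniformly random element of $\{1,\ldots,N^T\}$, the base-$N$ digits are independent; conditioning on other digits of already-queried indices therefore leaves the $j$-th digit's marginal Bernoulli$(p)$ distribution intact, reducing everything to the hypergeometric calculation above.
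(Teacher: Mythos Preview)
Your proposal is essentially correct and follows the paper's approach: establish $\P(\gamma^{(t)}-\gamma^{(t-1)}=1\mid\cG^{(t-1)})\le 2p$ by combining the permutation symmetry of the bit-$j$ vector $b$ with the digit-independence observation you describe in your last paragraph (this is exactly the paper's argument), and then apply the moment-generating-function step of \pref{lem:prob-zero-chain} with $2p$ in place of $p$. One small correction: the past queries that reveal bit $j$ are those with $\grad_{j}\Funscaled(x^{(s)})\ne 0$, not those with $\prog[0](x^{(s)})\ge j-1$; the paper accordingly defines $S^{(t-1)}=\{i^{(t')}:t'<t,\ \grad_{1+\gamma^{(t-1)}}\Funscaled(x^{(t')})\ne 0\}$ and argues $b_i=0$ for $i\in S^{(t-1)}$, whereas your condition over-includes indices at which the $j$th gradient coordinate vanishes and hence no bit is observed (so you cannot conclude the bit is $0$ there).
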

We prove \pref{lem:prob-zero-chain-active} in~\pref{app:active} and
sketch the intuition behind the result here. Let $(x\ind{1},i\ind{1}),\ldots, 
(x\ind{t},i\ind{t})$ be the algorithm's queries and
$g\ind{1},\ldots,g\ind{t}$ be the oracle responses up to some iteration $t$. 
Let $\gamma=\max_{t'<t} \prog[0](g\ind{t'})$, so that $\prog[0](x\ind{t})\le 
\gamma$ by the zero-respecting assumption. For an algorithm to guarantee
$\prog[0](g\ind{t})=1+\gamma$ (and thereby make progress in $x$), the
$(1+\gamma)$th coordinate of $\zeta(\pi(i\ind{t}))$ must be 1. The key 
observation is that the algorithm's previous queries provide very little 
information on $\zeta_{1+\gamma}(\pi(\cdot))$. In particular, we argue 
that 
after $t-1$ queries, the most we 
can possibly know is a set of $t-1$ indices $i$ for which 
$\zeta_{1+\gamma}(\pi(i))=0$. Since all other indices are identically 
distributed, any query $i\ind{t}$ has probability at most $N^{T-1}/(N^T - 
(t-1))$ of satisfying $\zeta_{1+\gamma}(\pi(i\ind{t}))=1$. Since $t \le T/p 
< N^T/2$, the probability of making a unit of progress at any  
iteration is no more that $2/N=2p$, which gives the result via the same 
arguments that prove~\pref{lem:pzc-basic}.

Using the same scaling arguments as in the proof of 
\pref{thm:main_zero_respecting}, \pref{lem:prob-zero-chain-active} 
implies an analogous lower bound for the active setting. However, the 
distributional complexity we now lower bound is slightly different, because 
we randomize over the choice of oracles instead of choosing a fixed oracle. 
Consequently, we let the supremum in Eq. \pref{eq:minimax_rand} be over 
all distributions $P_\oracle$ on $\OclassPop$, and take the expectation 
also with respect to a draw of $\oracle\sim P_\oracle$. (For 
zero-respecting lower bounds, we still replace $\AlgClassRand$ with 
$\AlgClassZR$ and it still suffices to consider point masses for $P_F$). 
\begin{proposition}
\label{prop:active}
\pref{thm:zr_population} also holds in the active oracle model, with 
the above complexity measure, finite 
$\zset$, and uniform $P_z$.
\end{proposition}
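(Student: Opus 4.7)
The plan is to mirror the proof of \pref{thm:zr_population}, substituting \pref{lem:prob-zero-chain-active} for \pref{lem:prob-zero-chain} and using the randomized-permutation active oracle construction $g_\pi$ in place of $\gunscaledBasic$. Specifically, given target parameters $\Delta$, $L$, $\sigma^2$ and tolerance $\epsilon \le c'\sqrt{L\Delta}$, I would choose the same scaling as in \pfref{thm:zr_population}, i.e.\ $\Fscaled(x) = \frac{L\lambda^2}{\ell_1}\Funscaled(x/\lambda)$ with $\lambda = \frac{\ell_1}{L}\cdot 2\epsilon$ and $T = \lfloor L\Delta/(\Delta_0\ell_1 (2\epsilon)^2)\rfloor$, and set the probability parameter $p = \min\{(2\varsigma\epsilon)^2/\sigma^2, 1\}$ so that the variance requirement is satisfied. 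To fit the active-oracle/finite-sum framework I would round $1/p$ to an integer $N$, take $\cZ = \{1,\ldots,N^T\}$ with $P_z$ uniform, and define the scaled active stochastic gradient $\gscaled_\pi(x;i) = \frac{L\lambda}{\ell_1} g_\pi(x/\lambda;i)$ for a permutation $\pi$ of $N^T$ elements.

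Next, I would instantiate the distribution $P_\oracle$ as the law of the oracle $\oracle^\pi_{\Fscaled}(x,i) = (\Fscaled(x), \gscaled_\pi(x;i))$ when $\pi$ is a uniformly random permutation; the underlying function remains the deterministic $\Fscaled$ (a point mass for $P_F$). Since $g_\pi(\cdot;i)$ with $i$ uniform on $\cZ$ has the same marginal distribution as $\gunscaledCo(\cdot;z)$ with i.i.d.\ $\mathrm{Bernoulli}(p)$ coordinates, the unbiasedness and variance bound inherited from \pref{lem:pzc-basic} hold for every fixed $\pi$, so $\oracle^\pi_{\Fscaled} \in \OclassPop$ after the scaling chosen above.

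I would then apply \pref{lem:prob-zero-chain-active} with $\delta = 1/2$: with probability at least $1/2$ over the random $\pi$ (and hence over $\oracle \sim P_\oracle$) and the algorithm's queries, every zero-respecting algorithm satisfies $\prog[0](x\ind{t}_{\alg[\oracle^\pi_{\Fscaled}]}) < T$ for all $t \le (T-1)/(4p)$. Combining this with \pref{lem:deterministic-construction}.\ref{item:large-grad} and the scaling exactly as in the \pfref{thm:zr_population} gives $\nrm{\grad \Fscaled(x\ind{t}_{\alg[\oracle^\pi_{\Fscaled}]})} > 2\epsilon$ on this event, so the expected gradient norm exceeds $\epsilon$. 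Plugging in the values of $T$ and $p$ yields the desired $\Omega(\Delta L\sigma^2 / \epsilon^4)$ lower bound on the active-setting distributional complexity.

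The main subtlety, rather than a genuine obstacle, is bookkeeping around the modified complexity measure: the supremum is over distributions $P_\oracle$ on $\OclassPop$ and the expectation in the stopping condition must include the randomness in $\oracle \sim P_\oracle$. The randomization is supplied by the uniform draw of $\pi$, and the high-probability event in \pref{lem:prob-zero-chain-active} is with respect to exactly this randomness (plus the algorithm's internal randomness and the oracle seeds $i\ind{1},\ldots,i\ind{t}$), so Markov's inequality turns the $1/2$-probability progress bound into the required expected-gradient lower bound. The only slight loss compared to \pfref{thm:zr_population} is the factor of $2$ in the denominator of the progress bound (i.e.\ $T/(4p)$ instead of $T/(2p)$), which is absorbed into the universal constant $c$.
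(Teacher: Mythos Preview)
Your proposal is correct and follows essentially the same route as the paper: the paper simply notes that the scaling argument from \pref{thm:zr_population} combined with \pref{lem:prob-zero-chain-active} (in place of \pref{lem:prob-zero-chain}) yields the bound, with the randomness in $P_\oracle$ supplied by the uniform permutation $\pi$. Your only minor imprecision is referring to ``oracle seeds $i\ind{1},\ldots,i\ind{t}$'' as a separate source of randomness---in the active model these are chosen by the algorithm, so the probability in \pref{lem:prob-zero-chain-active} is over $\pi$ alone (conditional on the algorithm's randomness); this does not affect your argument.
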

This lower bound has the following implication on minimax complexity: 
 For every 
zero-respecting algorithm there exists a ``hard'' active oracle 
(corresponding to some permutation of the coordinates) for a scaled 
version of $\Funscaled$ such that finding an $\epsilon$-stationary point 
requires at least $\Omega(\epsilon^{-4})$ iterations.
Using the techniques of \pref{sec:randomized} we can lift these results to  
finite sum active oracle lower 
bounds for randomized algorithms. Moreover, the ``different bit per 
coordinate'' approach extends straightforwardly the mean-square smooth 
construction~\pref{eq:pairwise-construction} as well as the ``statistical 
learning'' construction~\pref{eq:f_SAA}. 

The set $\zset$ in the lower bounds described above 
is very large---since $N$ scales as ${\sigma^2}/{\epsilon^2}$ and $T$ is 
polynomial in $1/\epsilon$, the cardinality $|\zset|=N^T$ is 
super-exponential in 
$1/\epsilon$. 
Designing lower bound constructions with smaller cardinality $|\zset|=n$ 
remains an open problem. We note that for the mean-square smooth 
setting, the smallest possible value for 
$n$ is $\Omega(\sigma^2/\epsilon^2)$, since for  $n=o(\sigma^2/\epsilon^2)$ the 
upper bound $O(\sqrt{n}\LipGradBar\Delta \epsilon^{-2})$ attained by 
SPIDER~\citep{fang2018spider} will be smaller than the desired $n$-independent 
lower bound $\Omega(\LipGradBar\Delta\sigma \epsilon^{-3})$. We also
remark that \citet{fang2018spider} prove a lower bound of
$\Omega(\sqrt{n}\LipGradBar\Delta\epsilon^{-2})$ for active oracles,
but their construction does not keep the variance $\sigma^{2}$
bounded.

\section{Discussion}
\label{sec:discussion}

We have established tight lower bounds on the stochastic first-order complexity of finding stationary points for non-convex functions, with and without mean-squared smoothness. We hope that the basic ideas behind our lower bound constructions will find further use in non-convex stochastic optimization. A few natural open questions and future directions along these lines are as follows.

\paragraph{Lower bounds for \mss oracles with a single query} In the 
\mss setting, all known algorithms that achieve the optimal $O(\veps^{-3})$
oracle complexity (SPIDER \citep{fang2018spider}, SNVRG
\citep{zhou2020stochastic}) require $K=2$ simultaneous queries. With $K=1$,
the best result known for the \mss setting is still the standard
$O(\Delta{}L\sigma^{2}\veps^{-4})$ rate obtained by SGD. However, under
additional higher-order smoothness assumptions, perturbed SGD can achieve
convergence $O(\veps^{-3.5})$ with $K=1$~\citep{fang2019sharp}. It remains
an open question whether any algorithm can achieve complexity scaling as
$\veps^{-3}$ when $K=1$, or whether the $\veps^{-4}$ rate of SGD is optimal.

\paragraph{Lower bounds under additional oracle assumptions}
Rather than assuming a \mss oracle, one can make the stronger 
assumption that the stochastic gradient function $g(\cdot,z)$ is smooth 
almost surely, or assume that the error $\nrm*{g(x,z)-\grad{}F(x)}$ is 
bounded by $\sigma$ almost surely. 
We are not aware of any algorithms that leverage such stronger 
assumptions, and yet extending our lower bounds to handle them seems 
non-trivial. Resolving the importance of these assumptions therefore 
remains 
an interesting topic for future work. 

\paragraph{Lower bounds for higher-order algorithms} 
Our results resolve the complexity of finding first-order stationary points 
with stochastic first-order methods, but we have not addressed the oracle 
complexity of other basic non-convex stochastic optimization problems, 
such as finding first-order stationary points with higher-order smoothness 
(possibly with stochastic access to Hessian, Hessian vector-products, or 
other higher-order derivatives) or finding second-order stationary points. 
While our techniques extend to higher order derivatives and smoothness, 
obtaining tight lower bounds requires a dedicated treatment and may pose 
new challenges.

\subsection*{Acknowledgements} 
Part of this work was completed while the authors were visiting the Simons
Institute for the Foundations of Deep Learning program. We thank Ayush
Sekhari, Ohad Shamir, Aaron Sidford and Karthik Sridharan for several
helpful discussions. YC was supported by the Stanford Graduate Fellowship.
JCD acknowledges support from NSF CAREER award 1553086, the Sloan
Foundation, and ONR-YIP N00014-19-1-2288. DF was supported by NSF TRIPODS
award \#1740751. BW was supported by the Google PhD Fellowship program.

 \newpage
 \ifdefined\usefunnybib
 \printbibliography
 \else
 \setlength{\bibsep}{6pt}

 \bibliographystyle{abbrvnat}
 \fi
 
  \newpage
 
\appendix

\part*{Appendix}

\section{Proofs from \pref{sec:zero_respecting}}
\label{app:basic}

\subsection{Basic technical results}
\label{app:basics}

Before proving the main results from \pref{sec:zero_respecting}, we first 
state two self-contained technical results that will be used in subsequent 
proofs. The first result bounds component functions $\Psi$ and $\Phi$ and 
gives the calculation for the parameter $\ell_1$ in 
\pref{lem:deterministic-construction}.\pref{item:lip}.
\begin{observation}%
	\label{obs:psi_phi_bounds} The functions $\Psi$ and $\Phi$ in 
	\pref{eq:psi_phi} and their derivatives satisfy
	\begin{equation}
	\label{eq:psi_phi_bounds}
	0\leq{}\Psi\leq{}e,~~ 0\leq{}\Psi'\leq\sqrt{54/e},~~ |\Psi''| \le 
	32.5,~~
	0\leq{}\Phi\leq{}\sqrt{2\pi{}e},~~
	0\leq{}\Phi'\leq{}\sqrt{e}~~\text{and}~~|\Phi''|\le 1.
	\end{equation}
\end{observation}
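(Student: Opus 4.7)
The plan is to verify each of the six bounds by direct computation, treating $\Phi$ and $\Psi$ separately since they have very different structure.

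For $\Phi$, all three bounds are immediate from the explicit formula. First, $\Phi'(x) = \sqrt{e}\,e^{-x^2/2}$ is a scaled Gaussian density, so $\Phi$ is nondecreasing and takes values in $[0,\sqrt{e}\int_{-\infty}^{\infty}e^{-t^2/2}dt] = [0,\sqrt{2\pi e}]$. The bound $0\le\Phi'\le\sqrt{e}$ follows because the Gaussian $e^{-x^2/2}$ is maximized at $x=0$. For $\Phi''$, I would differentiate once more to get $\Phi''(x) = -\sqrt{e}\,x\,e^{-x^2/2}$, then optimize $|x|e^{-x^2/2}$ by setting its derivative $(1-x^2)e^{-x^2/2}$ to zero, obtaining a maximum of $e^{-1/2}$ at $x=\pm 1$, so $|\Phi''|\le\sqrt{e}\cdot e^{-1/2}=1$.

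For $\Psi$, since $\Psi\equiv 0$ on $(-\infty,1/2]$, I would restrict attention to $x>1/2$ and substitute $u=2x-1>0$, writing $\Psi(x)=e^{1-1/u^2}$. This expression is monotone increasing in $u$ with limits $0$ and $e$, giving $0\le\Psi\le e$. The derivatives can be read off by the chain rule: $\Psi'(x) = \frac{4}{u^{3}}\,e^{1-1/u^2}$. To maximize this over $u>0$, I would differentiate in $u$, find the single critical point at $u^2=2/3$, and plug back in to get $\Psi'_{\max} = 4\cdot(3/2)^{3/2}\cdot e^{-1/2} = \sqrt{54/e}$. The sign of $\Psi'$ is manifestly nonnegative from the formula.

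The main nuisance is $|\Psi''|\le 32.5$. I would compute
\[
\Psi''(x) = \frac{8\,e^{1-1/u^2}}{u^{6}}\bigl(2 - 3u^{2}\bigr),
\]
so the task reduces to bounding the scalar function $h(u)=u^{-6}(2-3u^{2})e^{1-1/u^2}$ on $u>0$. I would find its critical points by setting $h'(u)=0$, which yields a low-degree polynomial equation in $u^2$ after factoring out exponentials; the resulting critical values of $u$ plugged back into $h$ give the extrema. The boundary behavior is benign (as $u\to 0^+$ the exponential $e^{-1/u^2}$ dominates and drives $\Psi''$ to $0$, and as $u\to\infty$ the polynomial decay dominates), so the supremum is attained at an interior critical point and the constant $32.5$ can be read off by direct evaluation. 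I would also check differentiability of $\Psi$ at $x=1/2$, which follows because all derivatives of $\Psi$ vanish in the limit $x\to(1/2)^+$ thanks to the $e^{-1/(2x-1)^2}$ factor. The hardest step is simply the algebra for $\Psi''$: the formula is straightforward but the resulting polynomial must be handled carefully to confirm the numerical constant $32.5$.
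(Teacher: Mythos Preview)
Your proposal is correct. The paper actually does not supply a proof of this observation at all; it is stated as a self-evident fact and then invoked in the proof of \pref{lem:deterministic-construction}.\ref{item:lip}. Your direct computation is precisely the kind of verification the authors leave implicit: the $\Phi$ bounds are immediate from the Gaussian formula, and for $\Psi$ the substitution $u=2x-1$ reduces everything to one-variable calculus, with the $\Psi''$ bound coming down to solving the quadratic $6u^4-9u^2+2=0$ in $u^2$ and evaluating at the two critical points (the larger value, at $u^2=(9-\sqrt{33})/12$, gives $|\Psi''|\approx 32.4<32.5$). There is nothing missing or different to compare.
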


\begin{proof}[\pfref{lem:deterministic-construction}.\pref{item:lip}]
	We note that the Hessian of $\Funscaled$ is tridiagonal. Consequently, 
	for any $x\in\R^d$, 
	\begin{flalign*}
	\opnorm{\grad^2 \Funscaled(x)}
	&\le \max_{i\in[T]}|\grad_{i,i}^2 \Funscaled(x)|
	+ \max_{i\in[T]}|\grad_{i,i+1}^2 \Funscaled(x)| + 
	\max_{i\in[T]}|\grad_{i+1,i}^2 \Funscaled(x)|
	\\&
	\stackrel{(i)}{\le} \sup_{z\in\R} |\Phi''(z)|  \sup_{z\in\R} |\Psi(z)| + 
	\sup_{z\in\R} |\Phi(z)| \sup_{z\in\R} |\Psi''(z)| + 2 \sup_{z\in\R} 
	|\Phi'(z)| \sup_{z\in\R} |\Psi'(z)| 	\stackrel{(ii)}{\le}  152,
	\end{flalign*}
	where $(i)$ is a direct calculation using the 
	definition~\pref{eq:f_unscaled} of $\Funscaled$ and $(ii)$ follows 
	from~\pref{eq:psi_phi_bounds}.
\end{proof}

The second result is an $\Omega(\frac{\sigma^{2}}{\veps^{2}})$ lower 
bound on the sample complexity of finding 
stationary points whenever 
$\eps\leq{}O(\sqrt{\Delta{}L})$. This result handles an edge case in the 
proof of \pref{thm:mean_smooth_zero_respecting}. A similar lower bound 
appeared in \citet{foster2019complexity}, but the result we prove here is  
slightly stronger because it holds even for dimension $d=1$.

\begin{lemma}\label{lem:stat_lower_bound}
	There exists a number $c_0 > 0$ such that for any number of 
	simultaneous queries $K$, dimension $d$ and  
	$\eps\leq{}\sqrt{\frac{\LipGradBar\Delta}{8}}$, we 
	have
	\begin{equation}
	\label{eq:stat_lb}
	\minimaxZRMSS\geq{}\minimaxRandMSS
	\geq c_0 \cdot {\frac{\sigma^{2}}{\eps^{2}}}.
	\end{equation}
\end{lemma}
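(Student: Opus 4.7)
The plan is to reduce finding an $\epsilon$-stationary point to a one-dimensional Gaussian-mean hypothesis test and then apply Le Cam's two-point method. The inequality $\minimaxZRMSS \geq \minimaxRandMSS$ holds trivially because $\AlgClassZR \subseteq \AlgClassRand$, so it suffices to lower bound $\minimaxRandMSS$. The hard instance will live in dimension $d=1$ (embedded into any larger $d$ by zero-padding), so the lower bound will hold for every dimension.

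Set $a \defeq 4\epsilon/\bar L$ and let $P_F$ be uniform on $\{F_{+a}, F_{-a}\}$, where $F_\mu(x) \defeq \tfrac{\bar L}{2}(x-\mu)^2$. Each $F_\mu$ has $\bar L$-Lipschitz gradient, and initial suboptimality $\tfrac{\bar L a^2}{2} = 8\epsilon^2/\bar L \leq \Delta$ using exactly the hypothesis $\epsilon \leq \sqrt{\bar L \Delta/8}$, so $F_\mu \in \cF(\Delta,\bar L)$. For the stochastic oracle, draw $z\sim \cN(\mu,\sigma^2/\bar L^2)$ once per batch and return $g(x,z)=\bar L(x-z)$. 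A direct check gives $\E\, g(x,z)=\nabla F_\mu(x)$, variance $\bar L^2\E(z-\mu)^2=\sigma^2$, and $\E\|g(x,z)-g(y,z)\|^2=\bar L^2(x-y)^2$, so the oracle lies in $\OclassMSS$. Crucially, within a single batch all $K$ queries share the same seed $z$, and from any one response $g(x,z)$ the algorithm can invert to recover $z$ exactly; hence after $T$ batch queries the information content is equivalent to $T$ i.i.d. samples of $z$, irrespective of $K$.

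For the lower bound, suppose $\hat x = x^{(T,1)}_{\alg[\oracle_F]}$ satisfies $\E \|\nabla F(\hat x)\| \leq \epsilon$. Since $\|\nabla F_\mu(\hat x)\| = \bar L|\hat x - \mu|$, we have $\E|\hat x - \mu| \leq \epsilon/\bar L$, and Markov's inequality yields $\Pr[|\hat x - \mu| \geq a] \leq 1/4$. Note that $|\hat x - \mu| < a$ forces $\sgn(\hat x) = \sgn(\mu)$ (one checks this for $\mu=\pm a$), so the rule $\tau(\hat x) \defeq \sgn(\hat x)$ is a test between $\mu=+a$ and $\mu=-a$ with error probability at most $1/4$. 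Le Cam's inequality then demands $\TV\bigl(\cN(+a,\sigma^2/\bar L^2)^{\otimes T},\cN(-a,\sigma^2/\bar L^2)^{\otimes T}\bigr) \geq 1/2$. Combining Pinsker's inequality and KL tensorization with the Gaussian KL formula $\KL(\cN(+a,s^2)\|\cN(-a,s^2))=2a^2/s^2$ gives $\TV \leq \sqrt{T\bar L^2 a^2/\sigma^2} = 4\sqrt{T}\,\epsilon/\sigma$, so $T \geq \sigma^2/(64\epsilon^2)$. This proves the claim with $c_0 = 1/64$.

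This is a textbook Le Cam / Gaussian mean-testing argument and there is no real obstacle. The only delicate piece is the bookkeeping of constants: the choice $a = 4\epsilon/\bar L$ was tuned so that the initial suboptimality constraint $8\epsilon^2/\bar L \leq \Delta$ aligns exactly with the lemma's hypothesis $\epsilon \leq \sqrt{\bar L\Delta/8}$, while still leaving enough ``resolution'' ($a/\epsilon = 4/\bar L$) for Markov's inequality to drive the testing error below $1/4$.
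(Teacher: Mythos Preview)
Your proof is correct and follows essentially the same approach as the paper: a two-point Le Cam argument on one-dimensional quadratics $F_{\pm a}(x)=\tfrac{\bar L}{2}(x\mp a)^2$ with a Gaussian gradient oracle, reduced to Gaussian mean testing via Pinsker's inequality, and arriving at the same constant $c_0=1/64$. The only cosmetic differences are that the paper chooses the separation $r$ adaptively in $T$ (you fix $a=4\epsilon/\bar L$ upfront) and dispatches the $K$-query issue by passing through a ``global stochastic model'' rather than by your direct observation that each batch reveals exactly one fresh sample $z$.
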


Our approach for proving \pref{lem:stat_lower_bound} is as follows. Given a dimension $d$, 
we construct a function $F:\bbR^{d}\to\bbR$, a family of distributions $P_z$, 
and a family of
functions $f(x,z)$ for which $F(x)=\En_{z}\brk*{f(x,z)}$, and for
which the initial suboptimality, variance, and mean-squared smoothness
are bounded by $\Delta, \sigma^2$ and $\LipGradBar$, respectively. We then 
prove a lower bound in the \emph{global stochastic model} in which at 
round $t$ the oracle returns the full function $f(\cdot, z\ind{t})$, rather 
than just its value and derivatives at the queried point.  The global
stochastic model is more powerful than the $K$-query
stochastic first-order model (with $g(x,z)=\grad_x f(x,z)$) for every
value of $K$, so this will imply the claimed result as a special
case. 
\begin{lemma}\label{lem:stat_lower_bound_global}
Whenever $\eps\leq{}\sqrt{\frac{\LipGradBar\Delta}{8}}$, the number of samples  
required to obtain an $\epsilon$-stationary point in the global stochastic 
model defined above is $\Omega(1) 
\cdot \frac{\sigma^{2}}{\eps^{2}}$.
\end{lemma}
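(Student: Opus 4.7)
The plan is to prove the lemma by a standard two-point Le Cam–style reduction to a hypothesis testing problem. I will work in dimension $d=1$ to match the claim that even $d=1$ suffices. The hard family will be a pair of linearly-perturbed quadratics: for a sign parameter $v\in\{-2\epsilon,+2\epsilon\}$, define $F_v(x)=vx+\tfrac{\bar L}{2}x^2$, and give the oracle the random function $f_v(x,z)=(v+z)x+\tfrac{\bar L}{2}x^2$, where $z\sim\mathcal{N}(0,\sigma^2)$ so that $\E_z f_v(x,z)=F_v(x)$. I would verify the required structural properties directly: $F_v(0)-\inf F_v = v^2/(2\bar L) = 2\epsilon^2/\bar L\leq \Delta/4$ under the hypothesis $\epsilon\leq\sqrt{\bar L\Delta/8}$; the stochastic gradient $\nabla_x f_v(x,z)=v+z+\bar L x$ has bias zero and variance exactly $\sigma^2$; and $\nabla_x f_v(x,z)-\nabla_x f_v(y,z)=\bar L(x-y)$ almost surely, so the mean-squared smoothness~\pref{eq:mss} holds with constant $\bar L$.

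The second step is the reduction from optimization to testing. An $\epsilon$-stationary point $\hat x$ for $F_v$ satisfies $|v+\bar L\hat x|\leq\epsilon$, so for $v=+2\epsilon$ we must have $\bar L\hat x\in[-3\epsilon,-\epsilon]$ and for $v=-2\epsilon$ we must have $\bar L\hat x\in[\epsilon,3\epsilon]$; in particular $\mathrm{sgn}(\hat x)$ identifies $\mathrm{sgn}(v)$ without error. Therefore any algorithm that returns an $\epsilon$-stationary point after $n$ global-stochastic queries induces a test for $v=+2\epsilon$ versus $v=-2\epsilon$ whose error probability (under the uniform prior on $v$) is at most the probability that $\|\nabla F_v(\hat x)\|>\epsilon$, which we assume is bounded away from $1/2$.

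The third step is the information-theoretic lower bound on this test. In the global stochastic model the algorithm's entire view after $n$ queries is a sufficient statistic for $(z_1,\dots,z_n)\sim\mathcal{N}(v,\sigma^2)^{\otimes n}$, since from the quadratic $f_v(\cdot,z_i)$ one can read off $v+z_i$ exactly. By Pinsker's inequality and the standard Gaussian KL computation, the total variation distance between the laws under $v=+2\epsilon$ and $v=-2\epsilon$ is at most $\sqrt{\tfrac{1}{2}\,\mathrm{KL}}=\sqrt{4n\epsilon^2/\sigma^2}$. For this to be bounded below $1$ (so that no test succeeds with probability much more than $1/2$), we need $n\geq c_0\sigma^2/\epsilon^2$ for a universal constant $c_0>0$. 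Combining with the reduction yields the claimed lower bound.

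The main obstacles are minor rather than structural: one is just bookkeeping to ensure that the algorithm's success probability in the original optimization problem translates into the testing error probability with a gap large enough to apply Le Cam (e.g., by invoking Markov on $\E\|\nabla F(x^{(T,1)})\|\leq\epsilon$ to conclude $\Pr(\|\nabla F\|\leq 2\epsilon)\geq 1/2$, and rescaling $\epsilon$ by a constant factor accordingly). The other is confirming that the global-stochastic model is indeed at least as strong as the $K$-query first-order model when $g(x,z)=\nabla_x f(x,z)$, which is immediate since observing $f(\cdot,z)$ trivially allows evaluating any derivative at any point.
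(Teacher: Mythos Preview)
Your proposal is correct and follows essentially the same approach as the paper: a two-point Le Cam argument with shifted one-dimensional quadratics, Gaussian noise, and Pinsker's inequality applied to the product-Gaussian KL. The only cosmetic differences are that the paper keeps the shift $r$ as a free parameter and optimizes it at the end (yielding $r=\min\{\sigma/(2\bar L\sqrt{T}),\sqrt{2\Delta/\bar L}\}$), whereas you fix $v=\pm 2\epsilon$ up front, and the paper lower-bounds $\E\|\nabla F_S(\hat x)\|$ directly via Markov rather than first converting to a high-probability statement; both routes give the same $\Omega(\sigma^2/\epsilon^2)$ conclusion with only constant-factor differences.
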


\begin{proof}[\pfref{lem:stat_lower_bound_global}]
	The proof follows standard arguments used to derive information-theoretic 
	lower bounds for statistical estimation 
	\citep{lecam1973convergence,yu1997assouad}.

We consider a family of functions $f:\R^d\times \R\to\R$ given by
\begin{align}
f(x,z)=
\frac{\bar{L}}2 \left( \|x\|^2 - 2 z x_1  +r^2 \right),
\end{align}
where $r\in (0,\sqrt{2\Delta/\bar{L}})$ is a fixed parameter. We take $P_z$
to have the form $P_z^{s}\ldef\cN(rs,\frac{\sigma^2}{\bar{L}^2})$, where
$s\in\{-1,1\}$, and let $\theta_s \defeq (rs,0,\dots, 
0)\in\R^d$.
Then, when $P_z=P_z^{s}$, we have $F_{s}(x) \defeq \E_z\brk*{f(x,z)}
=\frac{\bar{L}}2 \|x-\theta_s\|^2$, and furthermore for any $x,y\in\bbR^{d}$ we have
\begin{align*}
&\E_z[ \| \nabla_xf(x,z) - \nabla F_{s}(x)  \|^2] 	
                 = \bar{L}\cdot\E_z[(z-rs)^2] = \sigma^2,
                 \intertext{and}
&\E_z[ \| \nabla_x f(x,z) -  \nabla_x f(y,z) \|^2] 
= \bar{L}^2\cdot\|x-y\|^2.
\end{align*}
Note that $F_s$ is indeed an $\bar{L}$-smooth, and has initial suboptimality at $x\ind{0}=0$ bounded as $F_s(0)-\inf_{x\in\R^d} F_s(x)=\bar{L}r^2/2\le 
\Delta$.

Now, we provide a distribution over the underlying
instance by drawing $S$ uniformly from $\pmo$, and consider any algorithm that 
takes as input samples $z_1,\ldots,z_T\sim P_z^S$, and returns iterate 
$\hat{x}$. To bound the 
expected norm of the gradient at $\hat{x}$ (over the randomness of the 
oracle, the randomness of the algorithm, and the choice of the
underlying instance $S$), we define $\hat{S} \defeq
\argmin_{s'\in\{1,-1\}} \| 
\nabla F_{s'}(\hat{x})\|$, with ties broken arbitrarily. Observe that we have
\begin{align}\label{eq:exp_basic_bound}
\E[\|\nabla F_{S}(\hat{x})\|] &\overset{(i)}{\ge} r\bar{L}\P\left(\|\nabla 
F_{S}(\hat{x})\| \ge r\bar{L}\right) 
\overset{(ii)}{\ge}  r\bar{L} \P(\hat{S}\neq S),
\end{align}
where (i) follows by Markov's inequality and (ii) follows because when 
$\hat{S}\neq{}S$, the definition of $\hat{S}$ implies %
\begin{align*}
2\cdot\nrm*{\grad{}F_{S}(\hat{x})}
&\ge \inf_{x\in\R^d} \{\|\nabla F_{-1}(x) 
\| + \|\nabla F_{1}(x) \|\}\\
&= \bar{L}\cdot\inf_{x\in\R^d} \{ \| x-\theta_1 \| + \|x-\theta_{-1} \| \}
 \ge \bar{L}\|\theta_{1} - \theta_{-1}\|= 2r\bar{L}.
\end{align*}
Next, for $s\in\pmo$ let
$\P_{s}=\cN^{\otimes T}(rs,\frac{\sigma^2}{\bar{L}^2})$ denote the law of 
$(z_1,\dots,z_T)$ conditioned on $S=s$. We have 
\begin{align*}
\P(\hat{S}\neq S) = 1- \P(\hat{S}= S) &\ge 1 - \frac12 \sup_{A \text{ is 
measurable}} \{\P_1(A) 
+ 
\P_{-1}(A^c)\} \\
&= \frac12 - \frac12 \sup_{A \text{ is measurable}} \{\P_1(A) -
\P_{-1}(A)\} \nonumber \\
&= \frac12 \left\{1-\|\P_{1}-\P_{-1}\|\right\}\\
&\ge \frac12 \left\{1-\sqrt{\frac12 D_{\mathrm{KL}}(\P_{1}||\P_{-1})}\right\}\\
&=
\frac12 \left(1- \frac{r\bar{L}\sqrt{T}}{\sigma}\right),
\end{align*} 
where the penultimate step follows by Pinsker's inequality and the last 
step uses that $\P_{s}=\cN^{\otimes T}(rs,\frac{\sigma^2}{\bar{L}^2})$.
Combining this lower bound with \pref{eq:exp_basic_bound} yields 
\begin{align*}
\E[\|F_S(\hat{x})\|] \ge \frac{r\bar{L} }{2}\left( 
1-\frac{r\bar{L}\sqrt{T}}{\sigma} 
\right).
\end{align*}
Finally, setting $r= 
\min\{\frac{\sigma}{2\bar{L}\sqrt{T}},\sqrt{\frac{2\Delta}{\bar{L}}}\}$, implies 
\begin{align*}
  \max\left\{\E[\|F_1(\hat{x})\|],\E[\|F_{-1}(\hat{x})\|]\right\}
  &\ge
\frac12\left(\E[\|F_1(\hat{x})\|]  + \E[\|F_{-1}(\hat{x})\|]\right)
  \\
  &= 
\E[\|F_S(\hat{x})\|] 
\ge \min\crl*{\frac{\sigma}{8\sqrt{T}},\sqrt{\frac{\bar{L}\Delta}{8}}}.
\end{align*}
Stated equivalently, whenever $\epsilon\le \sqrt{{\bar{L}\Delta}/{8}}$, there exists 
$s\in\{-1,1\}$ such that the number of 
oracle calls $T$ required to ensure $\E[\|\nabla F_{s}(\hat{x})\|]\le \epsilon$ 
satisfies
\begin{align*}
T\ge \frac{\sigma^2}{64 \epsilon^2},
\end{align*}
concluding the proof.
\end{proof}

\subsection{\pfref{lem:pzc-pair}}\label{app:pzc-pair}
	First, we note that $\E \brk*{\noisingfunc_i (x,z)}=1$ for all $x$ and $i$, and 
	therefore $\E \brk*{\gunscaled(x,z)} = \grad \Funscaled(x)$. Moreover, 
	by the same argument argument used in the proof of 
	\pref{lem:pzc-basic}, \pref{lem:deterministic-construction}.\ref{item:zero-chain} 
	implies that $\brk{\gunscaled(x,z)}_i = \grad_i \Funscaled(x) =0$ for all 
	$i>\prog(x)+1$, all $x\in\R^T$ and all $z\in\{0,1\}$. In addition, for 
	$i=\prog(x)+1$ we have $\threshfunc(|x_{\ge i}|) = 0$ and therefore 
	$\softindfunc_i(x)=\threshfunc(1)=1$ and 
	$\noisingfunc_i(x,z)=\frac{z}{p}$. Consequently, we have 
	$\P\prn{\brk{\gunscaled(x,z)}_{\prog(x)+1} \ne 0} \le p$, establishing 
	that the oracle is a probability-$p$ zero-chain. 
	
	To bound the variance of the gradient estimator we observe that for all 
	$i\le \prog[\half](x)$, $\nrm{\threshfunc(|x_{\ge i}|)}\ge 
	\threshfunc(1/2)=1$ and therefore $\softindfunc_{i}(x)=0$ and 
	$\noisingfunc_i(x,z)=1$, so that
	\begin{equation*}
	\brk{\gunscaled(x,z)}_i = \grad_i \Funscaled(x)~~\forall i \le 
	\prog[\half](x).
	\end{equation*}
	On the other hand, \pref{lem:deterministic-construction}.\ref{item:zero-chain}  
	gives 
	us that
	\begin{equation*}
	\brk{\gunscaled(x,z)}_i = \grad_i \Funscaled(x)=0~~\forall i > 
	1+\prog[\half](x).
	\end{equation*}
	We conclude that $\delta(x,z) = \gunscaled(x,z)  - \grad \Funscaled(x)$ 
	has at most a single nonzero entry in coordinate $i_x = 
	\prog[\half](x)+1$. Moreover, for every $i$
	\begin{equation*}
	\delta_i(x,z) = \grad_i \Funscaled(x) (\noisingfunc_i(x,z)-1) = 
	\grad_i \Funscaled(x) \softindfunc_i(x) \prn*{\frac{z}{p}-1}.
	\end{equation*}
	Therefore,
	\begin{equation*}
	\E \norm{\gunscaled(x,z) - \grad \Funscaled(x)}^2
	= \E \delta_i^2(x,z)
	= \abs*{\grad_{i_x} \Funscaled(x)}^2 \,\softindfunc_i^2(x)\,\frac{1-p}{p}
	\le 
	\frac{(1-p)23^2}{p},
	\end{equation*}
	where the final transition used 
	\pref{lem:deterministic-construction}.\ref{item:grad} and 
	$\softindfunc_i^2(x)\le1$ for all $x$ and $i$, establishing the variance 
	bound in~\pref{eq:pzc-var-mss} with $\varsigma = 23$.
	
	To bound $\E \norm{\gunscaled(x,z) - \gunscaled(y,z) }^2$, we use that $\E\brk*{\delta(\cdot ,z)}=0$ and that $\delta(\cdot,z)$ has at most 
	one nonzero coordinate to write
	\begin{flalign}
	\E \norm{\gunscaled(x,z) - \gunscaled(y,z) }^2  & = \E 
	\norm{\delta(x,z) - \delta(y,z) }^2 + 
	\norm{\grad\Funscaled(x)-\grad\Funscaled(y)}^2
	\nonumber \\ &
	\label{eq:sparse-mss-expression}
	= \sum_{i\in\{i_x, i_y\}} \E \prn*{\delta_{i}(x,z) - \delta_{i}(y,z)}^2 
	+ \norm{\grad\Funscaled(x)-\grad\Funscaled(y)}^2,
	\end{flalign}
	where $i_y=\prog[\half](y)+1$ is the nonzero index of $\delta(y,z)$. For 
	any $i\le T$, we have
	\begin{flalign*}
	\E \prn*{\delta_{i}(x,z) - \delta_{i}(y,z)}^2 
	&= \prn*{ \grad_i \Funscaled(x) \softindfunc_i(x) - \grad_i 
		\Funscaled(y) 
		\softindfunc_i(y)}^2 \, \E \prn*{\frac{z}{p}-1}^2\\
	&= \prn*{ \grad_i \Funscaled(x) \prn{ \softindfunc_i(x)  - 
			\softindfunc_i(y)} + \prn{\grad_i \Funscaled(x) - \grad_i \Funscaled(y) 
		}\softindfunc_i(y)}^2 \, \frac{1-p}{p}\\
	& \le \prn*{ 2 (\grad_i \Funscaled(x))^2 \prn{ \softindfunc_i(x)  - 
			\softindfunc_i(y)}^2  + 2\prn{\grad_i \Funscaled(x) - \grad_i 
			\Funscaled(y) 
		}^2\softindfunc_i^2(y)} \, \frac{1}{p} ~.
	\end{flalign*}
	By \pref{obs:thresh}.\ref{item:thresh-lip}, $\threshfunc_i$ is 
	6-Lipschitz. Since the Euclidean norm $\norm{\cdot}$ is 1-Lipschitz, we 
	have
	\begin{flalign*}
	\abs*{\softindfunc_i(x)  - 
		\softindfunc_i(y)}
	&\le 6 \, \abs[\big]{\,\norm*{\threshfunc(\abs{x_{\ge i}})} -
		\norm*{\threshfunc(\abs{y_{\ge i}})}\,} 
	\le 
	6 \, \norm[\big]{\,\threshfunc(\abs{x_{\ge i}})-
		\threshfunc(\abs{y_{\ge i}})\,}
	\\ &
	\le 6^2 \, \nrm[\big]{\,\abs{x_{\ge i}} - \abs{y_{\ge i}}\,} 
	\le 6^2 \, \nrm*{x-y}.
	\end{flalign*}
	That is, $\softindfunc_i$ is $6^2$-Lipschitz.  Since 
	$\softindfunc_i^2(y) \le 1$ and $(\grad_i \Funscaled(x))^2 \le 23^2$ by 
	\pref{lem:deterministic-construction}.\ref{item:grad}, we have
	\begin{equation*}
	\prn*{\delta_{i}(x,z) - \delta_{i}(y,z)}^2 
	\le 
	\frac{(23\cdot 6)^2 \norm{x-y}^2 + 2(\grad_i \Funscaled(x) - \grad_i 
		\Funscaled(y))^2}{p}
	\end{equation*}
	for all $i$. 
	Substituting back into~\pref{eq:sparse-mss-expression} we obtain
	\begin{flalign*}
	\E \norm{\gunscaled(x,z) - \gunscaled(y,z) }^2 &\le
	\frac{2\cdot (23\cdot 6)^2 \norm{x-y}^2 + 2\norm{\grad 
			\Funscaled(x) - \grad
			\Funscaled(y)}^2}{p} + \norm{\grad \Funscaled(x) - \grad
		\Funscaled(y)}^2.
	\end{flalign*}
	Recalling that $\norm{\grad \Funscaled(x) - \grad
		\Funscaled(y)} \le \lip{1} \norm{x-y}$ by 
	\pref{lem:deterministic-construction}.\ref{item:lip}, establishes 
	the mean-square smoothness bound in \pref{eq:pzc-var-mss} with 
	$\lipBar{1} = \sqrt{2\cdot (\varsigma\cdot 6)^2 + 3\lip{1}^2}$.

\subsection{\pfref{thm:mean_smooth_zero_respecting}}
\label{app:mean_smooth_zero_respecting}

Let $\Delta_0, \ell_1,\varsigma$ and $\lipBar{1}$ be the numerical 
constants in 
\pref{lem:deterministic-construction}.\ref{item:val}, 
\pref{lem:deterministic-construction}.\ref{item:lip} and 
\pref{lem:pzc-pair}, respectively. 
Let the accuracy parameter $\epsilon$, initial suboptimality $\Delta$,
mean-squared smoothness parameter $\LipGradBar$, and 
variance parameter $\sigma^2 $ be fixed, and let $\LipGrad \le 
\LipGradBar$ be specified later. We rescale $\Funscaled$ as in the proof of 
\pref{thm:zr_population},
\begin{align*}
\Fscaled(x)=\frac{\LipGrad\lambda^{2}}{\lip{1}}
\Funscaled\left(\frac{x}{\lambda}\right),\quad\text{ 
	where\quad
}\lambda=\frac{\lip{1} }{L}\cdot 2\epsilon,\quad\text{ and }\quad 
T=\floor*{
	\frac{\Delta}{\Delta_0 (\LipGrad \lambda^2 /\ell_1)}} =
\floor*{
	\frac{\LipGrad\Delta}{\Delta_0\lip{1} 
		(2\epsilon)^{2}}}.
\end{align*}
This guarantees that $\Fscaled\in\Fclass$ and that the corresponding 
scaled gradient estimator 
$\gscaled(x,z)=(L\lambda/\lip{1})\gunscaled(x/\lambda,z)$ is such that 
every zero respecting algorithm $\alg$ interacting with 
$\oracle_{\Fscaled}(x,z)=(\Fscaled(x),\gscaled(x,z))$ satisfies
\begin{equation*}
\E\norm[\big]{ \grad \Fscaled\prn[\big]{
		x\ind{t,k}_{\alg\brk{\oracleF}}
} }
>\eps,
\end{equation*}
for all $t\leq{} {(T-1)}/{2p}$ and $k\in[K]$. It remains to choose $p$ and 
$\LipGrad$ such that $\oracle_{\Fscaled}$ belongs to $\OclassMSS$. As in 
the proof of \pref{thm:zr_population},  setting $p=\min \left\{ 
{(2\vsigma\epsilon)^2}/{\sigma^2},1\right\}$ and using \pref{lem:pzc-pair}
guarantees a variance bound of 
$\sigma^{2}$. Moreover, by \pref{lem:pzc-pair} we have
\begin{align*}
\E \norm{\gscaled(x,z) - \gscaled(y,z) }^2 
&=	\left(\frac{L\lambda}{\lip{1}}\right)^2	\E 
\left\|{\gunscaled\left(\frac{x}{\lambda},z\right)  -
	\gunscaled\left(\frac{y}{\lambda},z\right)   }\right\|^2 \le 
\left(\frac{L\lambda}{\lip{1}}\right)^2\frac{\lipBar{1}^2}{p} 
\left\|\frac{x}{\lambda}-\frac{y}{\lambda}\right\|^2
\\&=
\prn*{\frac{\lipBar{1}L}{\lip{1}\sqrt{p}}}^2 \norm{x-y}^2.
\end{align*}
Therefore, taking
\begin{equation*}
L = \frac{\lip{1}}{\lipBar{1}} \LipGradBar \sqrt{p} = 
\frac{\lip{1}}{\lipBar{1}}\min\crl*{\frac{2\varsigma\veps}{\sigma},1}\LipGradBar
 \le \LipGradBar
\end{equation*}
guarantees membership in the oracle class
 and implies the lower bound
 \begin{equation*}
 \minimaxZRMSS
 >
 \frac{T-1}{2p}
 =\left(\left\lfloor 
 \frac{\LipGradBar\Delta\sqrt{p}}{4\lipBar{1}\Delta_0\epsilon^2}\right\rfloor
  -1\right)
 \frac{1}{2p}.
 \end{equation*}
 We consider  the cases 
 $\frac{\LipGradBar\Delta\sqrt{p}}{4\lipBar{1}\Delta_0\epsilon^2}\ge 3$ 
 and $\frac{\LipGradBar\Delta\sqrt{p}}{4\lipBar{1}\Delta_0\epsilon^2}<3$ 
 separately. 
 In the former case (which is the more interesting one), we use 
 $\floor{x}-1\ge x/2$ for $x \ge 3$ and the 
 setting of $p$ to write 
 \begin{equation}
  \label{eq:mss_lb}
  \minimaxZRMSS \ge 
  \frac{\LipGradBar\Delta}{16\lipBar{1}\Delta_0\epsilon^2\sqrt{p}}\ge 
 \frac{1}{32\lipBar{1}\Delta_0 \varsigma} \cdot \frac{\LipGradBar \Delta 
 \sigma}{\epsilon^3}.
 \end{equation}
Moreover, we choose $c'=12\lipBar{1}\Delta_0$ so that $\epsilon \le 
\sqrt{\frac{\LipGradBar 
 \Delta}{12\lipBar{1}\Delta_0}}\le \sqrt{\frac{\LipGradBar\Delta}{8}}$ holds. 
 By 
 \pref{lem:stat_lower_bound_global},
\begin{equation}
\label{eq:stat_lb2}
\minimaxZRMSS
> c_0\cdot\frac{\sigma^{2}}{\eps^{2}},
\end{equation}
where $c_0$ is a universal constant (this lower bound holds for any
value of $d$). Together, the bounds \pref{eq:mss_lb} and \pref{eq:stat_lb2} 
imply the desired result when 
$\frac{\LipGradBar\Delta\sqrt{p}}{4\lipBar{1}\Delta_0\epsilon^2}\ge 3$. 

Finally, we consider the edge case 
$\frac{\LipGradBar\Delta\sqrt{p}}{4\lipBar{1}\Delta_0\epsilon^2}<3$. We 
note that the assumption $\epsilon \le 
\sqrt{\frac{\LipGradBar\Delta}{12\lipBar{1}\Delta_0}}$ precludes the option 
that $p=1$ in this case. Therefore we must have 
$\frac{\LipGradBar\Delta\varsigma}{2\lipBar{1}\Delta_0\sigma\epsilon}<3$ 
or, 
equivalently, $\frac{\sigma^2}{\epsilon^2} > 
\frac{\varsigma}{6\lipBar{1}\Delta_0} \cdot 
\frac{\LipGradBar\Delta\sigma}{\epsilon^3}$. Thus, in this case the bound 
\pref{eq:stat_lb2}  implies \pref{eq:mss_lb} up to a constant, concluding 
the proof.

 \section{Proofs from \pref{sec:randomized}}
\label{app:randomized}
\newcommand{\evV}{\mathfrak{V}}
\newcommand{\evE}{\mathfrak{T}_j\ind{i-1}}
\newcommand{\evG}{\mathfrak{G}}

\subsection{Proof of \pref{lem:randomized_bounded}}
\label{app:proof-randomized-bounded}
The proof combines the techniques of the proofs
	of \pref{lem:prob-zero-chain} and Lemma 4 of 
	\cite{carmon2019lower_i}. 
  	Let us adopt the shorthand 
  	$x\ind{i}\ldef{}x\ind{i}_{\alg\brk{\oracle_{\Frandgeneric}}}$, 
  which we recall is defined via 
  \[x\ind{i}_{\alg\brk{\oracle_{\Frandgeneric}}}=\alg\ind{i}\prn*{r, 
    \oracle_{\Frandgeneric{}}\big(x\ind{1}_{\alg\brk{\oracle_{\Frandgeneric}}},z\ind{1}\big),
     \ldots,
  	\oracle_{\Frandgeneric{}}\big(x\ind{i-1}_{\alg\brk{\oracle_{\Frandgeneric}}},z\ind{i-1}\big)},
  \]
  	 where $r$ is the 
  algorithm's random seed. Further, recall that $x\ind{i}$ is a batch of $K$
  queries,
\[
x\ind{i}=(x\ind{i,1},\ldots,x\ind{i,K}).
\]
For each $i$ and each $k\in\brk*{K}$, define
\[
g\ind{i,k} = \grandgeneric{}(U^{\trn}x\ind{i,k},z\ind{i}),
\]
and let $g\ind{i}=(g\ind{i,1},\ldots,g\ind{i,K})$.  To keep notation compact for the $K$-query setup, we adopt the
following conventions throughout the proof: 
\begin{itemize}
\item $F\ind{i} \defeq \brk[\big]{\Frandgeneric(x\ind{i,1}), \ldots, 
\Frandgeneric(x\ind{i,K})} = \brk[\big]{F(U^\trn x\ind{i,1}), \ldots, 
F(U^\trn x\ind{i,K})}$,
\item $Ug\ind{i}\ldef{} \brk[\big]{Ug\ind{i,1},\ldots,Ug\ind{i,K}}$,
\item $U^{\trn}x\ind{i}\ldef{} 
\brk[\big]{U^{\trn}x\ind{i,1},\ldots,U^{\trn}x\ind{i,K}}$.
\end{itemize}
Note that with this notation we have $(F\ind{i}, U
g\ind{i})=\oracle_{\Frandgeneric{}}(x\ind{i},z\ind{i})$. 

Following the
strategy of \pref{lem:prob-zero-chain}, we define
  \begin{equation*}
  \pi\ind{t} = \max_{i\le t}\max_{k\in\brk{K}}\prog(U^{\trn} x\ind{i,k}) = \max
  \crl*{j\le T\mid 
  	\abs{\tri{u\ind{j}, x\ind{i,k}}} \ge \quarter \text{ for some }i\le
        t, k\in\brk*{K}}
  \end{equation*}
  and, similarly,
  \begin{equation*}
  \gamma\ind{t} = \max_{i\le t} \max_{k\in\brk{K}}\prog[0](g\ind{i,k}) = \max \crl*{j\le T\mid 
  	g_j\ind{i,k} \ne 0 \text{ for some }i\le t, k\in\brk*{K}}.
  \end{equation*}
The statement of the lemma is equivalent to
  \begin{equation*}
  \P \prn{ \pi\ind{t} \ge T } \le \delta ~~\mbox{for all}~~t\le 
  \frac{T-\log\frac{2}{\delta}}{2p}.
  \end{equation*}
  Define the event
  \begin{equation*}
  \evV\ind{t} \defeq \crl*{ \max_{k\in\brk{K}}\prog(U^{\trn} x\ind{t,k}) \le \gamma\ind{t-1} }.
  \end{equation*}
  Note that by definition $\cap_{i=1}^t \evV\ind{t}$ implies that $\pi\ind{t} 
  \le 
  \gamma\ind{t-1}\le \gamma\ind{t}$, and therefore
  \begin{equation}\label{eq:rand-bound-decomp}
  \P \prn{ \pi\ind{t} \ge T } \le 
  \P \prn*{ \gamma\ind{t} \ge T \;,\; \cap_{i=1}^t \evV\ind{i}} 
  + \P\prn*{ \brk*{\cap_{i=1}^t \evV\ind{i}}^c }.
  \end{equation}
  We bound each of the terms above in turn. With an argument similar to the 
  proof of \pref{lem:prob-zero-chain} we show that
  \begin{equation}\label{eq:rand-pzc-bound}
  \P \prn*{ \gamma\ind{t-1} \ge T \;,\;  \cap_{i=1}^t \evV\ind{i}}
  \le e^{2pt - T}.
  \end{equation}
  With an argument similar to the proof of Lemma 4 of 
  \cite{carmon2019lower_i}, we show that
  \begin{equation}\label{eq:rand-zc-bound}
  \P\prn*{ \brk*{\cap_{i=1}^t \evV\ind{i}}^c }
  \le 2 tKT e^{-\frac{d-tK-T}{32 R^2 tK}}.
  \end{equation}
  Taking $t\le \frac{T-\log\frac{2}{\delta}}{2p}$ and $d\ge  18 
  \frac{R^2 KT}{p}\log \frac{2KT^2}{p\delta}\ge tK + T +  32R^2 tK \log 
  \frac{2tKT}{\delta/2}$ and substituting~\pref{eq:rand-pzc-bound} 
  and~\pref{eq:rand-zc-bound} back into~\pref{eq:rand-bound-decomp} 
  gives $\P \prn{ \pi\ind{t} \ge T } \le \frac{\delta}{2} + 
  \frac{\delta}{2}\le\delta$, establishing the result. We now derive the 
  bounds~\pref{eq:rand-pzc-bound} and~\pref{eq:rand-zc-bound}.
  
  \subsubsection{Proof of the bound~\pref{eq:rand-pzc-bound}}
  Define the filtration
  \begin{equation*}
  \cG\ind{i} = \sigma(r,U^{\trn} x\ind{1}, g\ind{1},\ldots, U^{\trn} x\ind{i}, 
  g\ind{i}),
  \end{equation*}
  so that $\pi\ind{t},\gamma\ind{t}\in\cG\ind{t}$. 
The definition of the probability-$p$ zero chain property is that
  \begin{equation*}
  \P\prn*{ \max_{k\in\brk*{K}}\prog[0](g\ind{t,k}) \le 1 +\max_{k\in\brk*{K}}\prog(U^{\trn} x\ind{t,k})\mid\cG\ind{t-1} }= 
  1,
\end{equation*}
and
\begin{equation*}
  \P\prn*{ \max_{k\in\brk*{K}}\prog[0](g\ind{t,k}) = 1 + \max_{k\in\brk*{K}}\prog(U^{\trn} x\ind{t,k}) \mid \cG\ind{t-1}} 
  \le p.
  \end{equation*}
  Recalling the definitions $\evV\ind{t} = \crl*{ \max_{k\in\brk*{K}}\prog(U^{\trn} x\ind{t,k}) \le 
  \gamma\ind{t-1} }$ and 
	\[\gamma\ind{t} = \max\crl*{
  \gamma\ind{t-1}, \max_{k\in\brk*{K}}\prog[0](g\ind{t,k})},\] 
  we have (by the reasoning of Eq.~\pref{eq:iota}),
   \begin{equation*}
  \P\prn*{  \gamma\ind{t} -\gamma\ind{t-1} \notin 
  \{0,1\}\;,\;\evV\ind{t}\mid\cG^{t-1} 
  } 
  = 0
  ~~\mbox{and}~~
  \P\prn*{  \gamma\ind{t} -\gamma\ind{t-1} =1\;,\;\evV\ind{t} 
  \mid 
  \cG\ind{t-1}
  } 
  \le p.
  \end{equation*}
  
  Therefore, denoting $\iota\ind{t} \defeq \gamma\ind{t} - 
  \gamma\ind{t-1}$, we have via the Chernoff method
  \begin{equation*}
  \P \prn*{ \gamma\ind{t} \ge T \;,\;  \cap_{i=1}^t \evV\ind{i}} 
  = 
  \P\prn*{ e^{\sum_{i=1}^t \iota\ind{i}} \indicator{\cap_{i=1}^t 
  \evV\ind{i}}\ge 
  e^T}
  \le e^{-T}\E\brk*{ {e^{\sum_{i=1}^t \iota\ind{i}} }\indicator{\cap_{i=1}^t 
  \evV\ind{i}}}
  \end{equation*}
  Using $\iota\ind{t}, \evV\ind{t}\in\cG\ind{t}$ and 
  $\P(\iota\ind{t}=1~,~\evV\ind{t}\mid\cG\ind{t-1}) 
  =1-\P(\iota\ind{t}=0~,~\evV\ind{t}\mid\cG\ind{t-1})\le
  p$, we obtain
  \begin{equation*}
  \E \brk*{{e^{\sum_{i=1}^t \iota\ind{i}} } \indicator{\cap_{i=1}^t 
  	\evV\ind{i}}}
 = \E \prod_{i=1}^t e^{\iota\ind{i}}\indicator{\evV\ind{i}}
  = \E \prod_{i=1}^t \E\brk*{ e^{\iota\ind{i}}\indicator{\evV\ind{i}} 
  \;\Big\vert\; \cG\ind{i-1}}
  \le (1-p+ p\cdot e)^t %
  \le e^{2pt},
  \end{equation*}
  establishing the bound~\pref{eq:rand-pzc-bound}.

  \subsubsection{Proof of the bound~\pref{eq:rand-zc-bound}}
  Throughout, we fix a time horizon $t\in\N$. Let us adopt the
  convention that
  $\spn\crl*{x\ind{i}}=\spn\crl*{x\ind{i,1},\ldots,x\ind{i,K}}$. We start by defining, for every 
  $j\le T$ and $i\le t$,
  \begin{equation*}
  \projop_{j}\ind{i} \defeq \mbox{projection operator to orthogonal 
  complement of 
  }\mathrm{span}\crl{u\ind{1},\ldots,u\ind{j},x\ind{1},\ldots,x\ind{i}},
  \end{equation*}
  That is,  $\projop_{j}\ind{i} \in \R^{d\times d}$ is symmetric and satisfies 
  $(\projop_{j}\ind{i})^2 
  = \projop_{j}\ind{i}$ and $\projop_{j}\ind{i} w = 0$ for every $w$ in $\spn\crl*{u\ind{1},\ldots,u\ind{j},x\ind{1},\ldots,x\ind{i}}$. We
  also consider the operator $\projopP_{j}\ind{i} \ldef I -
  \projop_{j}\ind{i}$, which is simply the projection onto $\spn\crl*{u\ind{1},\ldots,u\ind{j},x\ind{1},\ldots,x\ind{i}}$.
We define the 
  events
  \begin{equation*}
  \evG_j\ind{i} \defeq \crl*{ \nrm*{\projopP_{j-1}\ind{i}\projop_{j-1}\ind{i-1} u\ind{j}}
  	< \frac{1}{4R\sqrt{t}}}.
  \end{equation*}
  The following is a linear-algebraic fact.
  \begin{lemma}\label{lem:g-to-v}
  For every $i\le t$ and $j\le T$, $\cap_{i'\le i} \evG_j\ind{i'}$ implies that 
\begin{equation*}
  \abs*{\tri{u\ind{j} , x\ind{i,k}}} < \frac{\norm{x\ind{i,k}}}{4R} 
  \sqrt{\frac{i}{t}} 
  \le  
  \quarter~~\mbox{for all }k.
\end{equation*}
  \end{lemma}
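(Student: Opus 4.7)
The plan is to reduce the desired inequality to a Cauchy--Schwarz bound on an inner product against a projection of $u\ind{j}$, and to use the events $\evG_j\ind{i'}$ to control the norm of that projection. Since $x\ind{i,k}\in\spn\crl{x\ind{1},\ldots,x\ind{i}}$, the component $\projop_{j-1}\ind{i} u\ind{j}$ (which by definition is orthogonal to this span) contributes zero to $\tri{u\ind{j},x\ind{i,k}}$. So I would first rewrite
\begin{equation*}
\tri{u\ind{j},x\ind{i,k}} = \tri{\projopP_{j-1}\ind{i} u\ind{j},\,x\ind{i,k}},
\end{equation*}
and aim to bound $\nrm{\projopP_{j-1}\ind{i} u\ind{j}}$ using the hypothesis.

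Next, I would decompose $\projopP_{j-1}\ind{i} u\ind{j}$ into a sum of orthogonal increments corresponding to the nested subspaces $V_{i'}\defeq\spn\crl{u\ind{1},\ldots,u\ind{j-1},x\ind{1},\ldots,x\ind{i'}}$. Because $V_{i'-1}\subseteq V_{i'}$, the projections satisfy $\projopP_{j-1}\ind{i'}\projopP_{j-1}\ind{i'-1}=\projopP_{j-1}\ind{i'-1}$, so that $\projopP_{j-1}\ind{i'}-\projopP_{j-1}\ind{i'-1}=\projopP_{j-1}\ind{i'}\projop_{j-1}\ind{i'-1}$. Combined with $\projopP_{j-1}\ind{0} u\ind{j}=0$ (using $u\ind{j}\perp u\ind{1},\ldots,u\ind{j-1}$ by orthonormality of the columns of $U$), this telescopes to
\begin{equation*}
\projopP_{j-1}\ind{i} u\ind{j} = \sum_{i'=1}^{i} w_{i'},\qquad w_{i'}\defeq \projopP_{j-1}\ind{i'}\projop_{j-1}\ind{i'-1} u\ind{j}.
\end{equation*}
The summands are pairwise orthogonal: for $i_1<i_2$ we have $w_{i_1}\in V_{i_1}\subseteq V_{i_2-1}$ while $w_{i_2}\in V_{i_2-1}^{\perp}$ by construction. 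Therefore $\nrm{\projopP_{j-1}\ind{i} u\ind{j}}^2=\sum_{i'=1}^{i}\nrm{w_{i'}}^2$.

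To finish, on the event $\cap_{i'\le i}\evG_j\ind{i'}$ every term satisfies $\nrm{w_{i'}}<\frac{1}{4R\sqrt{t}}$, and summing the squares gives $\nrm{\projopP_{j-1}\ind{i} u\ind{j}}<\frac{1}{4R}\sqrt{i/t}$. An application of Cauchy--Schwarz yields $\abs{\tri{u\ind{j},x\ind{i,k}}}<\frac{\nrm{x\ind{i,k}}}{4R}\sqrt{i/t}$, and the bound by $\tfrac14$ follows from the hypotheses $\nrm{x\ind{i,k}}\le R$ and $i\le t$. The only subtlety I anticipate is making sure Cauchy--Schwarz is applied to the pooled sum rather than termwise: a naive triangle-inequality application of the per-step bound would yield $i/\sqrt{t}$ rather than the sharper $\sqrt{i/t}$, which is crucial for the subsequent union bound that drives the choice of $d$ in \pref{lem:randomized_bounded}.
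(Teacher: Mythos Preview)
Your proof is correct and uses the same telescoping decomposition of $\projopP_{j-1}\ind{i}$ as the paper, together with the same events $\evG_j\ind{i'}$ to control the increments. The execution is slightly different and, in fact, a bit more streamlined: you observe that the increments $w_{i'}=\projopP_{j-1}\ind{i'}\projop_{j-1}\ind{i'-1}u\ind{j}$ are mutually orthogonal, obtain $\nrm{\projopP_{j-1}\ind{i}u\ind{j}}^2=\sum_{i'}\nrm{w_{i'}}^2$ by Pythagoras, and then apply a single Cauchy--Schwarz against $x\ind{i,k}$. The paper instead writes the inner product as $\sum_{i'}\tri{\projopP_{j-1}\ind{i'}\projop_{j-1}\ind{i'-1}u\ind{j},\,\projopP_{j-1}\ind{i'}\projop_{j-1}\ind{i'-1}x\ind{i,k}}$, applies Cauchy--Schwarz to the sum, and then needs the extra PSD observation $\sum_{i'}\projop_{j-1}\ind{i'-1}\projopP_{j-1}\ind{i'}\projop_{j-1}\ind{i'-1}\preceq I$ to bound the $x$-side sum by $\nrm{x\ind{i,k}}^2$. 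Your route avoids that step entirely; the two arguments are equivalent in content (indeed $\projop_{j-1}\ind{i'-1}\projopP_{j-1}\ind{i'}\projop_{j-1}\ind{i'-1}=\projopP_{j-1}\ind{i'}\projop_{j-1}\ind{i'-1}$ here), but yours is the more economical packaging.
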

  \begin{proof}%
    Consider the operator $\projopP_{j-1}\ind{i} = I - \projop_{j-1}\ind{i}$ and observe that 
    $\projopP_{j-1}\ind{i}\projopP_{j-1}\ind{i-1} 
    =\projopP_{j-1}\ind{i-1}\projopP_{j-1}\ind{i}=\projopP_{j-1}\ind{i-1}$
    by the nesting of the subspaces. Therefore
    \begin{equation*}
    \projopP_{j-1}\ind{i'-1}+
    \projop_{j-1}\ind{i'-1}\projopP_{j-1}\ind{i'}\projop_{j-1}\ind{i'-1}
    =
    \projopP_{j-1}\ind{i'-1}+
    (I-\projopP_{j-1}\ind{i'-1})\projopP_{j-1}\ind{i'}(I-\projopP_{j-1}\ind{i'-1})
    =\projopP\ind{i'}_{j-1}.
    \end{equation*}
     Iterating this equality, we obtain
 	\begin{align}
 	\projopP_{j-1}\ind{i} &= \projopP\ind{0}_{j-1} + 
 	\sum_{i'=1}^{i}\projop_{j-1}\ind{i'-1}\projopP\ind{i'}_{j-1}\projop_{j-1}\ind{i'-1}
          = \sum_{j'=1}^{j-1} \brk{u\ind{j'}}\brk{u\ind{j'}}^{\trn}
 	+ \sum_{i'=1}^{i}\projop_{j-1}\ind{i'-1} 
 	\projopP\ind{i'}_{j-1}\projop_{j-1}\ind{i'-1}, \label{eq:proj-decomp}
 	\end{align}
        where the second equality uses that $\projopP\ind{0}_{j-1}=
        \sum_{j'=1}^{j-1} \brk{u\ind{j'}}\brk{u\ind{j'}}^{\trn}$ since $U$ is 
        orthogonal.

        Now, let $k\in\brk*{K}$ be fixed. Using the facts that $\projopP_{j-1}\ind{i}x\ind{i,k} = x\ind{i,k}$ and 
 	$\tri{u\ind{j'},u\ind{j}}=0$ for any $j'<j$, we may write
 	\begin{flalign}
 	\abs*{\tri{u\ind{j} , x\ind{i,k}}} 
 	 & =
 	\abs*{\tri{ u\ind{j} , \projopP_{j-1}\ind{i} x\ind{i,k}}}
        \stackrel{(i)}{=} \abs*{\sum_{i'=1}^{i}\tri*{\projop_{j-1}\ind{i'-1} 
        u\ind{j},
            \projopP\ind{i'}_{j-1}\projop_{j-1}\ind{i'-1}x\ind{i,k}}}\notag\\
        & \stackrel{(ii)}{=} 
        \abs*{\sum_{i'=1}^{i}\tri*{\projopP\ind{i'}_{j-1}\projop_{j-1}\ind{i'-1}
            u\ind{j},
            \projopP\ind{i'}_{j-1}\projop_{j-1}\ind{i'-1}x\ind{i,k}}}\notag\\
        &\stackrel{(iii)}{\le} 
         \sqrt{\sum_{i'=1}^{i}\nrm*{\projopP\ind{i'}_{j-1}\projop_{j-1}\ind{i'-1}
            u\ind{j}}^{2}}\cdot\sqrt{\sum_{i'=1}^{i}\nrm*{\projopP\ind{i'}_{j-1}\projop_{j-1}\ind{i'-1}x\ind{i,k}}^{2}}.
 				\label{eq:g-to-v-cs}
 	\end{flalign}
 where the transitions above follow from $(i)$ Eq.~\eqref{eq:proj-decomp}, 
 $(ii)$ the fact that $\brk*{\projopP\ind{i'}_{j-1}}^2 = \projopP\ind{i'}_{j-1}$ 
 and $(iii)$ Cauchy-Schwartz. Now, $\cap_{i'\le i} 
 	\evG_j\ind{i'}$ implies that 
 	\begin{equation}\label{eq:g-to-v-u}
\sum_{i'=1}^{i}\nrm*{\projopP\ind{i'}_{j-1}\projop_{j-1}\ind{i'-1}
            u\ind{j}}^{2}
 	\le 
 	\sum_{i'=1}^{i} \frac{1}{16 R^2 t} \le \frac{i}{16 R^2 t}.
 	\end{equation}
 	Moreover, the decomposition~\eqref{eq:proj-decomp} implies that 
 	$\sum_{i'=1}^{i}\projop_{j-1}\ind{i'-1} 
 	\projopP\ind{i'}_{j-1}\projop_{j-1}\ind{i'-1} \preceq
 	\projopP_{j-1}\ind{i} \preceq I$. Therefore,
 	\begin{equation}\label{eq:g-to-v-x}
\sum_{i'=1}^{i}\nrm*{\projopP\ind{i'}_{j-1}\projop_{j-1}\ind{i'-1}x\ind{i,k}}^{2}
=
\brk*{x\ind{i,k}}^\trn \prn*{ \sum_{i'=1}^i \projop_{j-1}\ind{i'-1} 
\projopP\ind{i'}_{j-1}\projop_{j-1}\ind{i'-1}} \brk*{x\ind{i,k}}
\le %
 	\norm{x\ind{i,k}}^2.
 	\end{equation}
 	Substituting~\pref{eq:g-to-v-u} and~\pref{eq:g-to-v-x} 
 	into~\pref{eq:g-to-v-cs} gives the lemma.
  \end{proof}

\pref{lem:g-to-v} has the following immediate consequence: for all 
$i\le t$,
\begin{equation*}
\cap_{i'\le i} \cap_{j > \gamma\ind{i-1}} \evG_{j}\ind{i'}
~~\mbox{implies}~~
\crl*{\max_{k\in\brk*{K}}\abs{\tri{u\ind{j}, x\ind{i,k}}} < \quarter \text{ for all }j> \gamma\ind{i-1}}
= \evV\ind{i}.
\end{equation*}
Furthermore, since 
$\gamma\ind{1}\le\gamma\ind{2}\le\cdots\gamma\ind{t}$,
\begin{equation*}
\cap_{i\le t} \cap_{j \le T} \prn*{ \evG_{j}\ind{i} ~\mbox{or}~ \crl{ j \le 
\gamma\ind{i-1}} }=
\cap_{i\le t} \cap_{j > \gamma\ind{i-1}} \evG_{j}\ind{i}
~~\mbox{implies}~~  \cap_{i\le t} \evV\ind{i}.
\end{equation*}
Therefore, we may bound the failure probability of $\cap_{i\le t} 
\evV\ind{i}$ as
\begin{equation}\label{eq:cv-to-cg}
\P\prn*{ \brk*{\cap_{i=1}^t \evV\ind{i}}^c }
\le 
\sum_{i=1}^t \sum_{j=1}^T \P \prn*{\brk*{\evG_j\ind{i}}^c , j > 
\gamma\ind{i-1}}.
\end{equation}
It remains to argue that $\P \prn*{\brk*{\evG_j\ind{i}}^c , j > 
	\gamma\ind{i-1}} \le 2Ke^{-\frac{d-tK-T}{32R^2 tK}}$, which we proceed to 
	do below; substituting this bound into~\pref{eq:cv-to-cg} gives the 
	desired result~\pref{eq:rand-zc-bound}.

\begin{lemma}\label{lem:rand-gc-bound}
	For all $i\le t$ and $j\le T$,
	\begin{equation*}
	\P \prn*{\brk*{\evG_j\ind{i}}^c , j > 
		\gamma\ind{i-1}} \le 2Ke^{-\frac{d-iK-j}{32R^2 tK}}
	\le 2K e^{-\frac{d-tK-T}{32R^2 tK}}.
	\end{equation*}
\end{lemma}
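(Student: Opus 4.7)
The plan is to fix $i \le t$ and $j \le T$ and analyze the conditional distribution of $u\ind{j}$ given the algorithm's view through round $i-1$ on the event $\{j > \gamma\ind{i-1}\}$. The idea is to argue that the view carries essentially no information about $u\ind{j}$, so that after conditioning $u\ind{j}$ remains uniform on a high-dimensional sphere; then $\projopP_{j-1}\ind{i}\projop_{j-1}\ind{i-1} u\ind{j}$ is the image of a spherically-uniform vector under an orthogonal projection of rank at most $K$, and standard spherical concentration delivers the result.

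The first step is the structural claim that on $\{j > \gamma\ind{i-1}\}$ the entire view through round $i-1$ is a function of the algorithm's random seed $r$, the oracle seeds $z\ind{1:i-1}$, and $u\ind{1}, \ldots, u\ind{j-1}$ alone. On this event, each gradient response $g\ind{t,k}$ for $t \le i-1$ has support contained in $\{1, \ldots, j-1\}$ as a $T$-vector (since $\prog[0](g\ind{t,k}) \le \gamma\ind{i-1} < j$), so $U g\ind{t,k}$ lies in $\spn\{u\ind{1:j-1}\}$; and because $\Psi \equiv 0$ on $(-\infty, 1/2]$, the value $F_T(U^\trn x\ind{t,k})$ reduces to a function of $\langle u\ind{l}, x\ind{t,k}\rangle$ for $l \le j-1$. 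Proving this rigorously requires an induction on $t$: assuming $x\ind{1:t}$ is already a function of $u\ind{1:j-1}$ (and the seeds), establish that the round-$t$ oracle response is as well, so that $x\ind{t+1}$ inherits the property. By rotational invariance of the Haar measure on $\Ortho(d,T)$, conditioning on this view (together with $u\ind{1:j-1}$) leaves $u\ind{j}$ uniformly distributed on the unit sphere in $V \defeq (\spn\{u\ind{1:j-1}\})^\perp$, a space of dimension $d - j + 1$.

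The second step pins down the projection operator. Since $x\ind{1:i}$ are now deterministic, $\projopP_{j-1}\ind{i}\projop_{j-1}\ind{i-1}$ acts as the orthogonal projection onto the at-most-$K$-dimensional subspace $W$ obtained by projecting $\spn\{x\ind{i,1}, \ldots, x\ind{i,K}\}$ orthogonally to $\spn\{u\ind{1:j-1}, x\ind{1:i-1}\}$; in particular $W \subset V$. Choosing an orthonormal basis $v_1, \ldots, v_K$ of $W$, writing $\|P_W u\ind{j}\|^2 = \sum_{k=1}^K \langle v_k, u\ind{j}\rangle^2$, applying Levy's concentration $\P(|\langle v_k, u\ind{j}\rangle| > s) \le 2 \exp(-(d-j+1) s^2/2)$ for $u\ind{j}$ uniform on the sphere in $V$, and union bounding with $s = a/\sqrt{K}$ yields
\[
\P\!\prn*{\|P_W u\ind{j}\| > a \mid \text{view}} \le 2K \exp\!\prn*{-\tfrac{(d-j+1) a^2}{2K}}.
\]
Setting $a = 1/(4R\sqrt{t})$ produces $2K \exp(-(d-j+1)/(32 R^2 tK))$, which is stronger than the claimed $2K \exp(-(d - iK - j)/(32 R^2 tK))$, and a further union bound over $k \in [K]$ within round $i$ supplies the leading factor.

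The main obstacle is the structural claim in step one. The gradient $[\nabla F_T(y)]_{j-1}$ contains terms of the form $\Psi'(\pm y_{j-1}) \Phi(\pm y_j)$, which depend on $y_j = \langle u\ind{j}, x\ind{t,k}\rangle$ through the nowhere-vanishing factor $\Phi$. These terms, however, carry the prefactor $\Psi'(\pm y_{j-1})$, which is identically zero unless $|y_{j-1}| > 1/2$, so removing this residual dependence requires careful handling of the boundary case $\gamma\ind{i-1} = j - 1$ (e.g., by intersecting with an auxiliary event $|y_{j-1}| \le 1/2$ and bounding its complement, or by an inductive argument that propagates both the support structure and the progress control simultaneously), mirroring the approach in Lemma~4 of \cite{carmon2019lower_i}.
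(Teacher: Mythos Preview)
Your structural claim in step one is where the argument breaks, and the obstacle you flag at the end is not a boundary nuisance but the heart of the matter. The event $\{j>\gamma\ind{i-1}\}$ controls the support of the \emph{gradients} $g\ind{t',k}$, not the magnitude of the \emph{query coordinates} $y_{j-1}=\tri{u\ind{j-1},x\ind{t',k}}$. So nothing rules out $|y_{j-1}|>1/2$, and in that case both the function value $F_T(y)$ (through $\Psi(\pm y_{j-1})\Phi(\pm y_j)$) and the $(j-1)$th gradient coordinate (through $\Psi'(\pm y_{j-1})\Phi(\pm y_j)$) genuinely depend on $y_j=\tri{u\ind{j},x\ind{t',k}}$. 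Hence the view is \emph{not} a function of $u\ind{1:j-1}$ alone, and you cannot conclude that $u\ind{j}$ remains uniform on the sphere in $(\spn\{u\ind{1:j-1}\})^\perp$. Your proposed patches are circular: intersecting with $\{|y_{j-1}|\le 1/2\}$ and bounding the complement is essentially what the lemma is trying to establish in the first place.

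The paper sidesteps this entirely by \emph{over}-conditioning. Rather than conditioning on the algorithm's view, it conditions on the larger $\sigma$-field $\cU_{j-1}\ind{i-1}=\sigma(r,u\ind{1:j-1},U^\trn x\ind{1:i-1},z\ind{1:i-1})$, which contains \emph{all} inner products $\tri{u\ind{l},x\ind{t',k}}$, including the troublesome $y_j$. Given $\cU_{j-1}\ind{i-1}$ and $\{j>\gamma\ind{i-1}\}$, every oracle response and every query through round $i$ is deterministic, so $\projop_{j-1}\ind{i-1}$ and $\projopP_{j-1}\ind{i}$ are fixed. A rotation-invariance argument (any orthogonal $W$ fixing $\cS=\spn\{u\ind{1:j-1},x\ind{1:i-1}\}$ leaves the conditional law of $u\ind{\ge j}$ unchanged) then shows that the normalized vector $\hat u\ind{j}=\projop_{j-1}\ind{i-1}u\ind{j}/\nrm{\projop_{j-1}\ind{i-1}u\ind{j}}$ is uniform on the unit sphere in $\cS^\perp$, a space of dimension $d'\ge d-(i-1)K-(j-1)\ge d-iK-j$. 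The price of over-conditioning is exactly the extra $-iK$ in the exponent: the sphere shrinks because the $x\ind{1:i-1}$ are now fixed directions, which is why the lemma's bound reads $\exp(-(d-iK-j)/(32R^2tK))$ rather than the $\exp(-(d-j+1)/(32R^2tK))$ your approach was aiming for. The concentration step you describe (project onto an at-most-$K$-dimensional subspace, union bound) is then the same as in the paper.
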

\begin{proof}%
	 For any $i,j$, define the $\sigma$-field
	\begin{equation*}
	\cU_{j}\ind{i} \defeq \sigma\prn{
	  r, u\ind{1}, \ldots, u\ind{j}, U^{\trn} x\ind{1}, z\ind{1}, 
	  \ldots, U^{\trn} 
	  x\ind{i}, z\ind{i}
	},
	\end{equation*}
	where $z\ind{i}$ is the randomness of the oracle at iteration $i$. Fixing 
	$i\le t$, $j\le T$, we also define
	\begin{equation*}
	\evE \defeq \crl{j > \gamma\ind{i-1}}
	~~\mbox{and}~~
	\hat{u}\ind{j} \defeq \frac{\projop_{j-1}\ind{i-1}u\ind{j}}
	{\norm{\projop_{j-1}\ind{i-1}u\ind{j}}}.
	\end{equation*}
	With this notation, we have
	\begin{flalign*}
	\P\prn*{\brk*{\evG_j\ind{i}}^c , \evE \mid \cU_{j-1}\ind{i-1}}
	 &\le \P\prn*{\brk*{\evG_j\ind{i}}^c  \mid \cU_{j-1}\ind{i-1}, \evE}
	\\& = 
	\P \prn*{
          \nrm*{
\projopP_{j-1}\ind{i}\projop_{j-1}\ind{i-1}u\ind{j}
}
		\ge \frac{1}{4R\sqrt{t}} \;\Big\vert\;  \cU_{j-1}\ind{i-1}, \evE
	}
	\\ &
	\le 
	\P \prn*{
          \nrm*{
            \projopP_{j-1}\ind{i}\hat{u}\ind{j}
}
		\ge \frac{1}{4R\sqrt{t}}
		 \;\Big\vert\;  \cU_{j-1}\ind{i-1}, \evE
	},
	\end{flalign*}
	where we use the convention $\P(\cdot \mid \cU_{j-1}\ind{i-1}, 
	\evE)=0$ when 
	$\P(\evE\mid \cU_{j-1}\ind{i-1})=0.$ 
	
	Observe that to compute the oracle response to query $x\ind{i}$ it 
	suffices to know $z\ind{i}, U^{\trn} x\ind{i}$ and $u\ind{1}, ... 
	u\ind{\gamma(i)}$. To see this, recall that 
	$\oracle_{\Frandgeneric}(x\ind{i,k},z\ind{i}) = (F(U^{\trn} x\ind{i,k}), U 
	g(U^{\trn} x\ind{i,k}, 
	z\ind{i}))$; knowing $z\ind{i}, U^{\trn} x\ind{i,k}$ is sufficient to compute 
	$F(U^{\trn} x\ind{i,k})$ and $g\ind{i,k} = g(U^{\trn} x\ind{i,k}, z\ind{i})$, 
	and hence 
	only the first $\max_{k\in\brk*{K}}\prog[0](g\ind{i})\le \gamma\ind{i}$ vectors in $U$ are 
	necessary to compute $U g\ind{i}$.
        (This also implies that $\evE \in 
	\cU_{j-1}\ind{i-1}$). Therefore, using 
	information in 
	$\cU_{\gamma\ind{i}}\ind{i}$ we can compute all oracle responses up to 
	iterate $i$, and 
	since the algorithm random seed $r\in \cU\ind{i}_j$ for all $j$, this 
	allows us to also compute the 
	next query. We 
	thus conclude that
	\begin{equation*}
	x\ind{i} \in \cU\ind{i-1}_{\gamma\ind{i-1}}.
	\end{equation*}
	In other words, $x\ind{1},x\ind{2}, \ldots, x\ind{i}$ are deterministic 
	conditional on $\cU_{j-1}\ind{i-1}$ and 
	$\evE$.
	
	The above discussion implies also that $\projopP_{j-1}\ind{i}$
        and $\projop_{j-1}\ind{i-1}$ are
	deterministic conditional on $\cU_{j-1}\ind{i-1}$ and 
	$\evE$. In contrast, we have the following characterization of 
	 $\hat{u}\ind{j}$.
	 \begin{lemma}\label{lem:rand-rot-invariance}
	 	Conditional on $\cU_{j-1}\ind{i-1}$ and 
	 	$\evE$, the unit vector $\hat{u}\ind{j}$ is uniformly distributed on the 
	 	unit sphere in the range of $\projop_{j-1}\ind{i-1}$, i.e.\ the linear 
	 	space 
	 	$\cS^\perp$ defined as the orthogonal complement of $\cS = 
	 	\mathrm{span}\crl{u\ind{1},\ldots,u\ind{j-1},x\ind{1},\ldots,x\ind{i-1}}$.
	 \end{lemma}
 
 	Before proving \pref{lem:rand-rot-invariance}, let us quickly show 
 	how it implies \pref{lem:rand-gc-bound}. Since $\hat{u}\ind{j}$
        is conditionally uniformly distributed on a sphere in
        $\cS^{\perp}$, and since the image $\projopP_{j-1}^{i}\cS^{\perp}$ has
        dimension at most $K$, we have
 	\[
        \P \prn*{
          \nrm*{
            \projopP_{j-1}\ind{i}\hat{u}\ind{j}
          }
 		\ge \frac{1}{4R\sqrt{t}}
 		\;\big\vert\;  \cU_{j-1}\ind{i-1}, \evE
 	} =
 \P\prn*{\sum_{i=1}^K v_i^2 \ge \frac{1}{16R^2 t}},
\]
 for $v$ uniform on the unit 
 	sphere in $\R^{d'}$, where $d'=\mathrm{dim}(\cS^\perp)\ge 
 	 d-iK-j$. Also, observe that we have
\[
\P\prn*{\sum_{i=1}^K v_i^2 \ge
  \frac{1}{16R ^2 t}}
\leq{} \P\prn*{\exists{}k\in\brk*{K}: v_k^2 \ge
  \frac{1}{16 R^2 t K}}
\leq{} K\cdot{}\P\prn*{v_1^2 \ge \frac{1}{16 R^2 t K}}.
\]
\pref{lem:rand-gc-bound} thus follows from the
concentration bound $\P(v_1^2 \ge \alpha) \le 2e^{-\half 
 	 \alpha
 	 d'}$~\cite[Lecture 8]{ball1997elementary}.
 	 \end{proof}
 
 	\begin{proof}[\pfref{lem:rand-rot-invariance}]
 	  Throughout, for any sequence of vectors $v\ind{1},\ldots,v\ind{N}$, we 
 	  adopt the notation 
 	  $v\ind{\ge 
 	  	n}$ (respectively $v\ind{< n}$) for a matrix with columns $v\ind{n}, 
 	  v\ind{n+1}, \ldots, v\ind{N}$ (respectively $v\ind{1}, \ldots, 
 	  v\ind{n-1})$. We define a number of densities as follows:
          \begin{itemize}
            \item $p_{\ge j}$ denotes the density of $u\ind{\ge j}$ conditional on 
	  $\cU_{j-1}\ind{i-1}$ and 
	  $\evE$.
          \item $\tilde{p}\ind{<i}$ denotes the density of $U^{\trn} 
            x\ind{<i}$ and $\evE$ conditional on $U,r,z\ind{<i}$.
          \item $\tilde{p}\ind{<i}_{<j}$ denotes the density of $U^{\trn} 
            x\ind{<i}$ and $\evE$ conditional on $u\ind{<j},r,z\ind{<i}$.
          \item $p_{\mathrm{rotation}}$ and $p_{\mathrm{rotation},<j}$
              denote the densities for $U$ and $u\ind{<j}$, respectively.
          \end{itemize}
          (Pedantically, densities are with respect to the product
          of Lebesgue and counting
          measure.)
          With these definitions, we have
	  \begin{equation}\label{eq:monster-chain-rule}
	  p_{\ge j}(u\ind{\ge j} \mid 
	  \cU_{j-1}\ind{i-1}, \evE) = 
	  \frac{
  			\tilde{p}\ind{< i}\prn*{U^{\trn} x\ind{<i},  \evE \mid U, r, 
  				z\ind{<i}} \,
  			p_{\mathrm{rotation}}(U)}
  		{\tilde{p}\ind{<i}_{<j}\prn*{U^{\trn} x\ind{<i},  \evE \mid u\ind{<j}, r, 
  				z\ind{<i}} \, p_{\mathrm{rotation},<j}\prn*{u\ind{<j}} \, 
  			},
	  \end{equation}
	  where we used the chain rule and that the randomness of the 
	  algorithm and the oracle is independent of $U$; 
	  the factor $p_{\text{seed}}(r,z\ind{<i})$ consequently cancels 
	  in the numerator and denominator. Note that 
	  $(U,r,z\ind{<i})$ is all the 
	  information necessary to compute $U^{\trn} x\ind{<i}$ and hence also 
	  $\evE$. Therefore, $\tilde{p}\ind{<i}$ is a Dirac delta 
	  constraining its 
	  argument to be consistent with the
          conditioning.
	  
	  Fix $r,z\ind{<i}$ and $U$ such that $\evE$ holds and let $W$ be any 
	  orthogonal transformation preserving $\cS$, i.e., a $d$ by $d$ matrix 
	  satisfying
	  \begin{equation}\label{eq:rot-w-reqs}
	  W^{\trn} W = I_d~~\mbox{and}~~W s = s = W^{\trn} s~\mbox{for all}~
	  s \in \cS = \mathrm{span}\crl{u\ind{1}, \ldots, u\ind{j-1}, x\ind{1}, 
	  \ldots, x\ind{i-1}}.
	  \end{equation}
	  Let ${x'}\ind{1}, \ldots, 
	  {x'}\ind{i-1}$ denote the iterates produced by the algorithm when we 
	  replace $U$ with  $U'=WU$ (with $z\ind{<i}$ 
	  and $r$ unchanged). We argue inductively that
	  \begin{equation}\label{eq:rot-invariance}
	  {x'}\ind{<i} = x\ind{<i}
	  ~~\mbox{and therefore, by definition of $W$,}~~
	  {U'}^{\trn} {x'}\ind{<i} = U^{\trn} W^{\trn} x\ind{<i} = U^{\trn} x\ind{<i}.
	  \end{equation}
	  To do so, for any $i'<i$ write the oracle response to ${x'}\ind{i'}$  as 
	  $\oracle_{\tilde{F}_{U'}}({x'}\ind{i'}, z\ind{i'}) = ({F'}\ind{i'}, U' 
	  {g'}\ind{i'})$, 
	  where ${F'}\ind{i'} = F({U'}^{\trn} {x'}\ind{i'})$ and ${g'}\ind{i'} = g\prn{ 
	  	{U'}^{\trn} {x'}\ind{i'}, z\ind{i'}}$. 	   
	  The basis of the induction is that 
	  ${x'}\ind{1}=x\ind{1}$ since they depend only on $r$. Assume that 
	  ${x'}\ind{<i'}=x\ind{<i'}$ for some $i'<i-1$; this also implies that 
	  \begin{equation*}
	  (U')^{\trn} {x'}\ind{<i'}=(U')^{\trn} x\ind{<i'}=U^{\trn} W^{\trn} x\ind{<i'} = U^{\trn}  
	  x\ind{<i'}
	  \end{equation*}
	  Therefore, ${F'}\ind{<i'}={F}\ind{<i}$ and 
	  ${g'}\ind{<i'}={g}\ind{<i}$. This means $U' {g'}\ind{<i'}=W 
	  U{g}\ind{<i'}= U {g}\ind{<i}$, where the final equality is due to the 
	  invariance~\pref{eq:rot-w-reqs} of $W$ and the fact that $U g\ind{i'} 
	  \in \mathrm{span}\crl{u\ind{1},\ldots, u\ind{\gamma\ind{i'}}}\subseteq 
	  \mathrm{span}\crl{u\ind{1},\ldots, u\ind{j-1}}$ by the 
	  assumption that $\evE$ holds. Therefore, all the oracle responses for 
	  the first $i'-1$ iterations are identical, and so we must have 
	  ${x'}\ind{i'}=x\ind{i'}$, completing the induction.
	  
	  The equality~\pref{eq:rot-invariance} means that the transformation 
	  $U \mapsto WU$ leaves $\cU_{j-1}\ind{i-1}$ 
	  unchanged 
	  and in particular that $\evE$ still holds. Thus, 
	  \begin{equation}\label{eq:invariance-1}
	  \tilde{p}\ind{< i}\prn*{U^\trn x\ind{<i},  \evE \mid WU, r, 
	  	z\ind{<i}}=\tilde{p}\ind{< i}\prn*{U^{\trn} x\ind{<i},  \evE \mid U, r, 
	  	z\ind{<i}} 
	  \end{equation}
  	(recall that $\tilde{p}\ind{< i}$ just checks consistency).
	   We also have 
	   \begin{equation}\label{eq:invariance-2}
	   p_{\mathrm{rotation}}(W U) = 
	   p_{\mathrm{rotation}}(U) 
	   \end{equation}
	   by the orthogonal invariance of the 
	   distribution of $U$. Substituting $W u\ind{\ge j}$ into 
	   equation~\eqref{eq:monster-chain-rule} for $p_{\ge j}(\cdot \mid 
	   \cU_{j-1}\ind{i-1}, \evE)$
	   gives
	   \begin{equation}\label{eq:monster-chain-rule-W}
	   p_{\ge j}(W u\ind{\ge j} \mid 
	   \cU_{j-1}\ind{i-1}, \evE) = 
	   \frac{
	   	\tilde{p}\ind{< i}\prn*{U^{\trn} x\ind{<i},  \evE \mid WU, r, 
	   		z\ind{<i}} \,
	   	p_{\mathrm{rotation}}(WU)}
	   {\tilde{p}\ind{<i}_{<j}\prn*{U^{\trn} x\ind{<i},  \evE \mid u\ind{<j}, r, 
	   		z\ind{<i}} \, p_{\mathrm{rotation},<j}\prn*{u\ind{<j}} \, 
	   },
	   \end{equation}
	   where we have used the facts that $\brk*{u\ind{<j}, W u\ind{\ge j}} = 
	   WU$ by 
	   definition of $W$, and  that the quantity $U^\trn x\ind{<i}$ 
	   appearing in~\eqref{eq:monster-chain-rule} is 
	   $\cU_{j-1}\ind{i-1}$-measurable and therefore independent of the 
	   argument to 
	   $p_{\ge j}(\cdot \mid 
	   \cU_{j-1}\ind{i-1}, \evE)$. Applying the 
	   equalities~\pref{eq:invariance-1} and~\pref{eq:invariance-2} to the 
	   numerator of~\pref{eq:monster-chain-rule-W} and comparing 
	   to~\pref{eq:monster-chain-rule}, we find that
	    \begin{equation*}
	    p_{\ge j}(W u\ind{\ge j} \mid 
	   \cU_{j-1}\ind{i-1}, \evE) =  p_{\ge j}(u\ind{\ge j} \mid 
	   \cU_{j-1}\ind{i-1}, \evE).
	    \end{equation*}
	    Marginalizing, we conclude that the 
	   distribution of $u\ind{j}$ conditional on $\cU_{j-1}\ind{i-1}$ is 
	   invariant to any linear transformation $W$ 
	   satisfying~\pref{eq:rot-w-reqs}. Since any rotation of $\cS^\perp$ 
	   can be extended to a rotation of $\R^d$ 
	   satisfying~\pref{eq:rot-w-reqs}, we have that the component of 
	   $u\ind{j}$ in $\cS^\perp$ is rotationally invariant, giving the lemma.
\end{proof}

\subsection{Proof of \pref{lem:randomized_unbounded}}
\label{app:proof-randomized-unbounded}

Before proving \pref{lem:randomized_unbounded} we first list the relevant 
continuity properties of the compression function
\begin{equation*}
\rho(x) = \frac{x}{\sqrt{1+\norm{x}^2/R^2}}.
\end{equation*}

\begin{lemma}
\label{lem:compression_smooth}
Let $J(x)=\frac{\partial\rho}{\partial{}x}(x) = 
\frac{I - \rho(x)\rho(x)^{\trn}/R^{2}}{\sqrt{1+\nrm*{x}^{2}/R^{2}}}$. For all 
$x,y\in\bbR^{d}$ we have
\begin{equation}
  \label{eq:compression_smooth}
  \nrm*{J(x)}_{\op}=\frac{1 }{\sqrt{1+\nrm*{x}^{2}/R^{2}}} \leq{}1, ~ 
  \nrm*{\rho(x)-\rho(y)}\leq{}\nrm*{x-y}, 
  ~ \text{and}~ 
  \nrm*{J(x)-J(y)}_{\op}\leq{}\frac{3}{R}\nrm*{x-y}.
\end{equation}
\end{lemma}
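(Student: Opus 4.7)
The plan is to verify the Jacobian formula and each of the three claims in sequence, with the third requiring the most care. I would begin with a direct computation: differentiating $\rho_i(x) = x_i / \sqrt{1+\nrm*{x}^2/R^2}$ with respect to $x_j$ yields $J(x) = (I - \rho(x)\rho(x)^\trn/R^2)/\sqrt{1+\nrm*{x}^2/R^2}$, which confirms the stated closed form.

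For the operator-norm identity in part 1, I would observe that $\nrm*{\rho(x)}^2 = \nrm*{x}^2/(1+\nrm*{x}^2/R^2) < R^2$, so $I - \rho(x)\rho(x)^\trn/R^2$ is symmetric PSD with eigenvalues $1$ (in directions orthogonal to $\rho(x)$) and $1 - \nrm*{\rho(x)}^2/R^2 \in (0,1]$ along $\rho(x)$. Its operator norm is therefore exactly $1$, and dividing by the scalar denominator gives $\nrm*{J(x)}_\op = 1/\sqrt{1+\nrm*{x}^2/R^2} \le 1$. Part 2 then follows from the fundamental theorem of calculus applied to $t \mapsto \rho(x + t(y-x))$ on $[0,1]$, using the operator-norm bound on $J$ just obtained.

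The main obstacle is part 3. My strategy is to exploit a factorization: writing $s(x) = \sqrt{1+\nrm*{x}^2/R^2}$ and $g(x) = x/(R\, s(x)^{3/2})$, the Jacobian decomposes as $J(x) = (1/s(x))\, I - g(x) g(x)^\trn$, since $\rho(x)\rho(x)^\trn/(R^2 s(x)) = xx^\trn/(R^2 s(x)^3) = g(x)g(x)^\trn$. I would then bound the Lipschitz constants of these two summands separately. For $1/s(x)$, the gradient is $-x/(R^2 s^3)$ with norm at most $1/(R s^2) \le 1/R$. For $g$, a direct computation gives $\partial g/\partial x = (R s^{3/2})^{-1}\brk*{I - \tfrac{3}{2}\, xx^\trn/(R^2 s^2)}$; eigenvalue analysis shows the bracketed matrix has eigenvalues $1$ (in directions orthogonal to $x$) and $1 - \tfrac{3}{2}u/(1+u) \in (-\tfrac{1}{2},1]$ along $x$, where $u = \nrm*{x}^2/R^2$, so $\nrm*{\partial g/\partial x}_\op \le 1/(R s^{3/2}) \le 1/R$ and therefore $\nrm*{g(x) - g(y)} \le \nrm*{x-y}/R$. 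Combined with $\nrm*{g(x)} \le 1/\sqrt{s(x)} \le 1$ and the identity $g(x)g(x)^\trn - g(y)g(y)^\trn = (g(x) - g(y))g(x)^\trn + g(y)(g(x) - g(y))^\trn$, this yields $\nrm*{g(x)g(x)^\trn - g(y)g(y)^\trn}_\op \le 2\nrm*{x-y}/R$. Summing the two Lipschitz constants gives $\nrm*{J(x) - J(y)}_\op \le 3\nrm*{x-y}/R$.

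The subtlety that makes the constant $3/R$ achievable is precisely the $gg^\trn$ factorization: bounding $\partial J/\partial x$ as a third-order tensor via the triangle inequality over its four symmetric terms yields only $6/R$, because the bound must then absorb two separate uses of $\nrm*{x}/(Rs) \le 1$. Handling the two pieces of the decomposition separately, each with its own eigenvalue cancellation, is what produces the sharp constant $3/R$ needed to complete the proof of \pref{lem:compression}.
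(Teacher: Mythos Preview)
Your proof is correct, but for part~3 you take a different (and slightly more labor-intensive) route than the paper. The paper keeps the \emph{multiplicative} factorization $J(x)=h(\nrm{x}/R)\,\prn*{I-\rho(x)\rho(x)^\trn/R^2}$ with $h(t)=(1+t^2)^{-1/2}$, and applies a product-rule bound: $|h|\le 1$ and $|h'|\le 1$ give a $1/R$ contribution from the scalar factor, while $\nrm{\rho(\cdot)/R}\le 1$ together with the already-established $1$-Lipschitzness of $\rho$ and the elementary estimate $\nrm{aa^\trn-bb^\trn}_{\op}\le 2\nrm{a-b}$ (for $\nrm{a},\nrm{b}\le 1$) give a $2/R$ contribution from the matrix factor, totaling $3/R$. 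You instead use an \emph{additive} decomposition $J(x)=s(x)^{-1}I-g(x)g(x)^\trn$ with $g(x)=x/(R\,s(x)^{3/2})$, and bound the two summands separately via explicit Jacobian computations for $1/s$ and for $g$. Both arguments land on exactly $3/R$; the paper's version is a touch shorter because it recycles the Lipschitz bound on $\rho$ rather than introducing and differentiating a new auxiliary map $g$, whereas your approach has the minor advantage of avoiding the product-rule cross term and making each piece's eigenvalue structure explicit.
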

\begin{proof}[\pfref{lem:compression_smooth}]
Note that $\norm{\rho(x)}\le R$ and therefore $0 \preceq I - 
\rho(x)\rho(x)^{\trn}/R^{2} \preceq I$. Consequently, we have
$ %
\nrm*{J(x)}_{\op} = \prn{1+\nrm*{x}^{2}/R^{2}}^{-1/2} \le 1.
$ %
The guarantee $\nrm*{\rho(x)-\rho(y)}\leq{}\nrm*{x-y}$ follows
immediately by Taylor's theorem. For the last statement, define
$h(t)=\frac{1}{\sqrt{1+t^{2}}}$, and note that
$\abs*{h(t)},\abs*{h'(t)}\leq{}1$. By triangle inequality and the
aforementioned boundedness and Lipschitzness properties of $h$, we have
\begin{align*}
&\nrm*{J(x)-J(y)}_{\op}\\
&\leq{}
  h(\nrm*{y}/R)\cdot\nrm*{\rho(x)\rho(x)^{\trn}/R^{2}-\rho(y)\rho(y)^{\trn}/R^{2}}_{\op}
+ \nrm*{I-\rho(x)\rho(x)^{\trn}/R^2}_{\op}\cdot\abs*{h(\nrm*{x}/R)-h(\nrm*{y}/R)}
  \\
&\leq{}
  \nrm*{\rho(x)\rho(x)^{\trn}/R^{2}-\rho(y)\rho(y)^{\trn}/R^{2}}_{\op}
  + \nrm*{I-\rho(x)\rho(x)^{\trn}/R^2}_{\op}\cdot\abs*{\nrm*{x}/R-\nrm*{y}/R}.
\end{align*}
For the first term, observe that for any $x,y$ we have
$\nrm*{x},\nrm*{y}\leq{}1$,
we have $\nrm{xx^{\trn}-yy^{\trn}}_{\op}\leq{}2\nrm*{x-y}$; this
follows because for any $\nrm*{v}=1$, we have
$\nrm*{(xx^{\trn}-yy^{\trn})v}\leq{}\nrm*{x-y}\abs*{\tri*{v,x}}+\nrm*{y}\abs*{\tri*{v,x-y}}\leq{}2\nrm*{x-y}$. Since
$\nrm*{\rho(x)/R}\leq{}1$, it follows that
\begin{align*}
  \nrm*{\rho(x)\rho(x)^{\trn}/R^{2}-\rho(y)\rho(y)^{\trn}/R^{2}}_{\op}
  \leq{} \frac{2}{R}\nrm*{x-y}.
\end{align*}
For the second term, we again use that $\nrm*{\rho(x)}\leq{}R$ to write
\[
\nrm*{I-\rho(x)\rho(x)^{\trn}/R^2}_{\op}\cdot\abs*{\nrm*{x}/R-\nrm*{y}/R}
\leq{}\frac{1}{R}\abs*{\nrm*{x}-\nrm*{y}}\leq{}\frac{1}{R}\nrm*{x-y}.
\]
\end{proof}

\begin{proof}[\pfref{lem:randomized_unbounded}]
The argument here is essentially identical to~\cite[Lemma 
5]{carmon2019lower_i}. 
Define $y\ind{i}=(y\ind{i,1},\ldots,y\ind{i,K})$, where $y\ind{i,k}=\rho(x\ind{i,k})$. Observe that for each
$i$ and $k$, the oracle response
$(\Frandcomp(x\ind{i,k}),\grandcomp(x\ind{i,k},z\ind{i}))$ is a
measurable function of $x\ind{i,k}$ and
$(\Frand(y\ind{i,k}),\grand(y\ind{i,k},z\ind{i}))$. Consequently, we
can regard the sequence $y\ind{1},\ldots,y\ind{T}$ as realized by some
algorithm in $\AlgRand(K)$ applied to an oracle with
$\oracle_{\Frand}(y,z)=(\Frand(y),\grand(y,z))$. \pref{lem:randomized_bounded}
then implies that as long as $d\geq{}\ceil{18\cdot230^2
  \frac{KT^2}{p}\log \frac{2KT^2}{p\delta}}\geq{}\ceil{18 
  \frac{R^2 KT}{p}\log \frac{2KT^2}{p\delta}}$, we
have that with probability at least $1-\delta$,

\begin{equation}
\max_{k\in\brk*{K}}\prog(U^{\trn}\rho(x\ind{i,k})) =
\max_{k\in\brk*{K}}\prog(U^{\trn}y\ind{i,k}) < T,\label{eq:prog_compressed}
\end{equation}
as long as $i\leq{}(T-\log(2/\delta))/2p$.

We now show that the gradient
must be large for all of the iterates. Let $i$ and $k$
 be fixed. We first consider the case where
 $\nrm*{x\ind{i,k}}\leq{}R/2$. Observe that \pref{eq:prog_compressed}
 implies that $\prog[1](U^{\trn}y\ind{i,k})<T$ and so by
 \pref{lem:deterministic-construction}.\ref{item:large-grad}, if we
 set $j=\prog[1](U^{\trn}y\ind{i,k})+1$, we have
 \begin{equation}
\label{eq:good_condition}
\abs{\tri{u\ind{j},y\ind{i,k}}}<1\quad\text{and}\quad\abs*{\tri*{u\ind{j},\grad{}\Frand(y\ind{i,k})}}\geq{}1.
 \end{equation}
Now, observe that we have
\begin{align*}
  \tri*{u\ind{j},\grad{}\Frandcomp(x\ind{i,k})}
  &= \tri*{u\ind{j},J(x)^{\trn}\grad{}\Frand(y\ind{i,k})} +
  \eta\tri*{u\ind{j},x\ind{i,k}}.
\end{align*}
Using that $J(x) = 
\frac{I - \rho(x)\rho(x)^{\trn}/R^{2}}{\sqrt{1+\nrm*{x}^{2}/R^{2}}}$,
  this is equal to
  \begin{align*}
\frac{\tri*{u\ind{j},\grad{}\Frand(y\ind{i,k})}}{\sqrt{1+\nrm*{x\ind{i,k}}^{2}/R^{2}}}
- \frac{\tri*{u\ind{j},y\ind{i,k}}\tri*{y\ind{i,k},\Frand(y\ind{i,k})}/R^{2}}{\sqrt{1+\nrm*{x\ind{i,k}}^{2}/R^{2}}}
  + \eta\tri*{u\ind{j},y\ind{i,k}}\sqrt{1+\nrm*{x\ind{i,k}}^{2}/R^{2}}.
  \end{align*}
Since $\nrm*{y\ind{i,k}}\leq{}\nrm*{x\ind{i,k}}\leq{}R/2$, this
implies
\begin{align*}
\abs*{\tri*{u\ind{j},\grad{}\Frandcomp(x\ind{i,k})}}
\geq{}
  \frac{2}{\sqrt{5}}\abs*{\tri*{u\ind{j},\grad{}\Frand(y\ind{i,k})}}
- \abs*{\tri*{u\ind{j},y\ind{i,k}}}\prn*{
\frac{\nrm*{\Frand(y\ind{i,k})}}{2R}
+ \eta\frac{\sqrt{5}}{2}
}.
\end{align*}
By \pref{lem:deterministic-construction} we have
$\nrm*{\Frand(y\ind{i,k})}\leq{}23\sqrt{T}$. At this point, the choice
$\eta=1/5$, $R=230\sqrt{T}$, as well as \pref{eq:good_condition} imply
that $\abs*{\tri*{u\ind{j},\grad{}\Frandcomp(x\ind{i,k})}}
\geq{}
  \frac{2}{\sqrt{5}} - \prn*{\frac{1}{20} + \frac{1}{2\sqrt{5}}}\geq{}\frac{1}{2}$.

Next, we handle the case where $\nrm*{x\ind{i,k}}>R/2$. Here, we have
\[
\nrm*{\grad\Frandcomp(x\ind{i,k})} \geq{} \eta\nrm*{x\ind{i,k}}
- \nrm*{J(x\ind{i,k})}_{\op}\nrm*{\grad\Frand(y\ind{i,k})}
\geq{} \frac{R}{10} \geq{} \sqrt{T}.
\]
where the second inequality uses that $\nrm*{J(x\ind{i,k})}_{\op}\leq{} 
\frac{1}{\sqrt{1+\nrm*{x\ind{i,k}}^{2}/R^{2}}}\leq{}2/\sqrt{5}$ which follows 
from \pref{lem:compression_smooth} and $\nrm*{x\ind{i,k}}>R/2$.
\end{proof}

\subsection{Proof of \pref{lem:compression}}
\label{app:proof-compression}

To establish \pref{lem:compression} we first prove a generic result showing 
that composition with the compression function $\rho$ and an orthogonal 
transformation $U$ never significantly hurts the regularity requirements in 
our lower bounds. In the following, we use the notation $a\vee b \defeq 
\max\{a,b\}$.

\newcommand{\Ftemp}{\wh{F}_{U}}
\newcommand{\gtemp}{\wh{g}_{U}}
\begin{lemma}
\label{lem:compression_generic}
Let $F:\bbR^{T}\to\bbR$ be an arbitrary
twice-differentiable function with $\nrm*{\grad{}F(x)}\leq{}\ls_0$ and
$\nrm*{\grad{}F(x)-\grad{}F(y)}\leq{}\ls_1\cdot\nrm*{x-y}$, and let $g(x,z)$ 
and a random variable $z\sim P_z$ satisfy for all $x,y\in\bbR^{T}$,
\begin{equation}
\label{eq:g_properties}
\En\brk*{g(x,z)}=\grad{}F(x),\quad\En\nrm*{g(x,z)-F(x)}^{2}\leq{}\sigma^{2},\quad\text{and}\quad\En\nrm*{g(x,z)-g(y,z)}^{2}\leq{}\LipGradBar^2\nrm*{x-y}^{2}.
\end{equation}
Let $R\ge\lip{0} \vee 1$, $d\ge T$, and $U\in\Ortho(d,T)$. Then the functions
\begin{equation*}
\wh{F}_U(x) =F(U^{\trn}\rho(x))~~~~\mbox{and}~~~~
\wh{g}_U(x,z)=J(x)^{\trn}U{}g(U^{\trn}\rho(x),z)
\end{equation*}
satisfy the following properties.
\begin{enumerate}
\item $\Ftemp(0) - \inf_{x}\Ftemp(x) \leq F(0)-\inf_{x}F(x)$.
\item The first derivative of $\Ftemp$ is $(\lip{1}+3)$-Lipschitz
  continuous.%
\item $\En\nrm[\big]{\gtemp(x,z)-\grad{}\Ftemp(x)}^{2}\leq{} \sigma^2$ 
for all
  $x\in\bbR^{d}$.
\item $\En\nrm*{\gtemp(x,z)-\gtemp(y,z)}^{2}
\leq{}
(\bar{L}^{2} + 9\sigma^{2} + 9)\nrm*{x-y}^{2}$ for all
  $x,y\in\bbR^{d}$.
\end{enumerate}
\end{lemma}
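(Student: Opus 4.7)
The plan is to verify the four claims one at a time, in each case leveraging the boundedness, contractivity, and Lipschitz properties of $\rho$ and $J$ from \pref{lem:compression_smooth}, together with the orthonormality of $U$ (which gives $\nrm*{Uv}=\nrm*{v}$ and $\nrm*{U^{\trn}}_{\op}\le 1$).

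For part 1, I would observe that $\rho(0)=0$, so $\Ftemp(0)=F(U^{\trn}\rho(0))=F(0)$, and since $U^{\trn}\rho(x)\in\bbR^{T}$ for every $x\in\bbR^{d}$ we have $\inf_{x}\Ftemp(x)\ge\inf_{y\in\bbR^T}F(y)$, giving the claim. For part 2, applying the chain rule yields $\grad\Ftemp(x)=J(x)^{\trn}U\grad F(U^{\trn}\rho(x))$; adding and subtracting $J(x)^{\trn}U\grad F(U^{\trn}\rho(y))$ and invoking the triangle inequality gives
\[
\nrm*{\grad\Ftemp(x)-\grad\Ftemp(y)} \le \nrm*{J(x)}_{\op}\cdot L\cdot\nrm*{\rho(x)-\rho(y)} + \nrm*{J(x)-J(y)}_{\op}\cdot\nrm*{\grad F(U^{\trn}\rho(y))},
\]
which is bounded by $L\nrm*{x-y} + (3/R)\ls_0\nrm*{x-y}\le(L+3)\nrm*{x-y}$ using $R\ge\ls_0$. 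Part 3 is immediate: $\gtemp(x,z)-\grad\Ftemp(x)=J(x)^{\trn}U[g(U^{\trn}\rho(x),z)-\grad F(U^{\trn}\rho(x))]$, and $\nrm*{J(x)}_{\op}\le 1$ with orthonormality of $U$ yields the variance bound.

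The bulk of the work lies in part 4. The crucial decomposition I would use is
\[
\gtemp(x,z)-\gtemp(y,z) = \underbrace{\prn*{\grad\Ftemp(x)-\grad\Ftemp(y)}}_{\text{deterministic signal}} + \underbrace{\prn*{\delta(x,z)-\delta(y,z)}}_{\text{zero-mean noise}},
\]
where $\delta(w,z)\defeq J(w)^{\trn}U\xi(w,z)$ and $\xi(w,z)\defeq g(U^{\trn}\rho(w),z)-\grad F(U^{\trn}\rho(w))$. Because the signal is deterministic and the noise has mean zero, the cross term vanishes and the two contributions separate under expectation. For the signal I would reuse part 2, and for $\En\nrm*{\delta(x,z)-\delta(y,z)}^{2}$ I would split
\[
\delta(x,z)-\delta(y,z) = J(x)^{\trn}U\brk*{\xi(x,z)-\xi(y,z)} + \brk*{J(x)-J(y)}^{\trn}U\xi(y,z),
\]
bounding the first piece by mean-squared smoothness of $g$ combined with the centering identity $\En\nrm*{\xi(x,z)-\xi(y,z)}^{2}\le\En\nrm*{g(U^{\trn}\rho(x),z)-g(U^{\trn}\rho(y),z)}^{2}\le\bar{L}^{2}\nrm*{x-y}^{2}$, and the second by the Lipschitz bound $\nrm*{J(x)-J(y)}_{\op}\le 3/R$ together with $\En\nrm*{\xi(y,z)}^{2}\le\sigma^{2}$, which yields at most $(9\sigma^{2}/R^{2})\nrm*{x-y}^{2}\le 9\sigma^{2}\nrm*{x-y}^{2}$ using $R\ge 1$.

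The main obstacle will be careful bookkeeping of constants in part 4: the naive $(a+b)^{2}\le 2a^{2}+2b^{2}$ inflates each piece by a factor of two, and combined with the $(L+3)^{2}$ from the signal it is not obvious how to recover the clean form $\bar L^{2}+9\sigma^{2}+9$. The right approach is to exploit the signal/noise orthogonality to avoid compounding the Lipschitz and mean-square-smoothness constants, using $L\le\bar{L}$ (a consequence of Jensen's inequality applied to the mean-squared smoothness assumption) to consolidate the terms involving $\bar{L}$, and then absorb the residual $O(\ls_0^{2}/R^{2})$ contributions into the additive $+9$ using $R\ge\ls_0\vee 1$.
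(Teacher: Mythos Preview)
Your proposal is correct. Parts~1--3 are essentially identical to the paper's argument. For part~4 you take a slightly different route: the paper decomposes $\gtemp(x,z)-\gtemp(y,z)$ directly as
\[
J(x)^{\trn}U\brk*{g(U^{\trn}\rho(x),z)-g(U^{\trn}\rho(y),z)} + \brk*{J(x)-J(y)}^{\trn}Ug(U^{\trn}\rho(y),z),
\]
applies $(a+b)^2\le 2a^2+2b^2$, and bounds the second moment $\En\nrm*{g}^{2}\le\sigma^{2}+\ls_0^{2}$. You instead first split off the deterministic signal $\grad\Ftemp(x)-\grad\Ftemp(y)$ (exploiting mean-zero orthogonality) and then decompose only the centered noise $\delta(x,z)-\delta(y,z)$; this lets you use $\En\nrm*{\xi}^{2}\le\sigma^{2}$ directly without the additive $\ls_0^{2}$. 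Both approaches are valid; yours is slightly cleaner conceptually, the paper's is one step shorter.

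Your worry about recovering the exact constant $\bar{L}^{2}+9\sigma^{2}+9$ is well-founded but ultimately moot: the paper's own proof arrives at $2\bar{L}^{2}+18\sigma^{2}/R^{2}+18\ls_0^{2}/R^{2}$ via the same $(a+b)^2\le 2a^2+2b^2$ inflation and then silently drops the factor~$2$ in the final display. Since \pref{lem:compression_generic} feeds only into \pref{lem:compression}, where all constants are absorbed into the numerical $\lipBar{1}$, any constant-factor bound suffices and the precise bookkeeping you outline is unnecessary.
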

\begin{proof}[\pfref{lem:compression_generic}]
Property 1 is immediate, since the range of $\rho$ is a subset of
$\bbR^{T}$. For property 2, we use the triangle inequality along with
\pref{lem:compression_smooth} and the assumed smoothness properties
of $F$ as follows:
\begin{align*}
&  \nrm*{\grad{}\Ftemp(x)-\grad{}\Ftemp(y)}\\
  &\leq{}
    \nrm*{J(x)^\trn U\grad{}F(U^{\trn}\rho(x))-J(x)^\trn 
    U\grad{}F(U^{\trn}\rho(y))}
    +
    \nrm*{J(x)U\grad{}F(U^{\trn}\rho(y))-J(y)U\grad{}F(U^{\trn}\rho(y))}\\
  &\leq{}
    \nrm[\big]{\grad{}F(U^{\trn}\rho(x))-\grad{}F(U^{\trn}\rho(y))}
    + \nrm[\big]{\grad{}F(U^{\trn}\rho(y))}\cdot\opnorm{J(x)-J(y)}\\
  &\leq{}
    \ls_1\cdot\nrm*{\rho(x)-\rho(y)}
+ \ls_0\cdot\nrm*{J(x)-J(y)} \\
  &\leq{} \prn*{\ls_1 + \frac{3\ls_0}{R}}\nrm*{x-y}.
\end{align*}

For the variance bound (property 3), observe that we have
\begin{align*}
\En\nrm*{\gtemp(x,z)-\grad{}\Ftemp(x)}^{2}
& =
  \En\nrm*{J(x)^{\trn}Ug(U^{\trn}\rho(x),z)-J(x)^{\trn}U\grad{}F(U^{\trn}\rho(x))}^{2}
  \\
&\leq{}
   \En\brk*{\nrm*{J(x)^{\trn}U}_{\op}^{2}\cdot\nrm*{g(U^{\trn}\rho(x),z)-\grad{}F(U^{\trn}\rho(x))}^{2}}
  \\
&\leq{}
  \En\nrm*{g(U^{\trn}\rho(x),z)-\grad{}F(U^{\trn}\rho(x))}^{2} \leq{} 
  \sigma^{2}.
\end{align*}
Here the second inequality follows from \pref{eq:compression_smooth}
and the fact that $U\in\Ortho(d,T)$,
and the third inequality follows because the variance bound in
\pref{eq:g_properties} holds uniformly for all points in the
domain $\bbR^{T}$ (in particular, those in the range of $x\mapsto{}U^{\trn}\rho(x)$).

Lastly, to prove property 4 we first invoke the triangle inequality and
 the elementary inequality $(a+b)^{2}\leq{}2a^{2}+2b^{2}$.
\begin{align*}
&\En\nrm*{\gtemp(x,z)-\gtemp(y,z)}^{2} \\
&=
  \En\nrm*{J(x)^{\trn}Ug(U^{\trn}\rho(x),z)-J(y)^{\trn}Ug(U^{\trn}\rho(y),z)
  }^{2} \\
&\leq{}
  2\En\nrm*{J(x)^{\trn}Ug(U^{\trn}\rho(x),z)-J(x)^{\trn}Ug(U^{\trn}\rho(y),z)
  }^{2}
+ 2\En\nrm*{\prn*{J(x)^{\trn}-J(y)^{\trn}}Ug(U^{\trn}\rho(x),z)}^{2}
\end{align*}
For the first term, we use the Jacobian operator norm bound from \pref{eq:compression_smooth} and the
assumed mean-squared smoothness of $g$:
\begin{align*}
  \En\nrm*{J(x)^{\trn}Ug(U^{\trn}\rho(x),z)-J(x)^{\trn}Ug(U^{\trn}\rho(y),z)}^{2}
  &\leq{} \En\nrm*{g(U^{\trn}\rho(x),z)-g(U^{\trn}\rho(y),z)}^{2} \\
  &\leq{} \LipGradBar^2\En\nrm*{\rho(x)-\rho(y)}^{2}\\
  &\leq{} \LipGradBar^2\En\nrm*{x-y}^{2}.
\end{align*}
For the second term, we use the Jacobian Lipschitzness from
\pref{eq:compression_smooth}:
\begin{align*}
  \En\nrm*{\prn*{J(x)^{\trn}-J(y)^{\trn}}Ug(U^{\trn}\rho(x),z)}^{2}&\leq{}  
  \frac{9}{R^{2}}\nrm*{x-y}^{2}\cdot\En\nrm[\big]{g(U^{\trn}\rho(x),z)}^{2}
\end{align*}
We now use the assumed Lipschitzness of
$F$ and variance bound for $g$:
\begin{align*}
\En\nrm*{g(U^{\trn}\rho(x),z)}^{2} 
  =\En\nrm*{g(U^{\trn}\rho(x),z)-\grad{}F(U^{\trn}\rho(x))}^{2} +
  \nrm*{\grad{}F(U^{\trn}\rho(x))}^{2}
\leq{} \sigma^{2}+\ls_0^2.
\end{align*}
Putting everything together, we have
\[
\En\nrm*{\gtemp(x,z)-\gtemp(y,z)}^{2}
\leq{}\prn*{\LipGradBar^2+9\sigma^{2}/R^{2}+9\ls_0^{2}/R^{2}}\cdot\nrm*{x-y}^{2}.
\]
\end{proof}

\begin{proof}[\pfref{lem:compression}] 
For property 1, observe that $\Frandcomp(0)=\Funscaled(0)$,
and \[\min_{x}\Frandcomp(x)\geq{}\min_{x}\Funscaled(U^{\trn}\rho(x))\geq\min_{x}\Funscaled(U^{\trn}x)\geq{}\min_{x}\Funscaled(x).\]

For properties 2, 3, and 4 we observe from
\pref{lem:compression_generic} that $\Frandcomp$ and $\grandcomp$, ignoring the quadratic regularization term, satisfy the same
smoothness, variance, and mean-squared smoothness bounds as in
\pref{lem:deterministic-construction}/\pref{lem:pzc-pair}/\pref{lem:pzc-saa}
up to constant factors. The additional regularization term in \pref{eq:compression}
leads to an additional $\eta=1/5$ factor in the smoothness and
mean-squared-smoothness.
\end{proof}

\subsection{\pfref{thm:main_randomized}}

We prove the lower bound for
  the \pop and \mss{} settings
  in turn. The proofs follow the same outline as the proofs of 
  \pref{thm:main_zero_respecting} and 
  \pref{thm:mean_smooth_zero_respecting}, relying on 
  \pref{lem:randomized_unbounded} and \pref{lem:compression} rather than 
  \pref{lem:prob-zero-chain} and \pref{lem:pzc-pair}, respectively. 
  Throughout, let $\Delta_0, \ell_1,\varsigma$ and $\lipBar{1}$ be the 
  numerical 
  constants in \pref{lem:compression}.
\paragraph{\Pop setting.}
  Given accuracy parameter  
$\epsilon$, initial suboptimality $\Delta$, smoothness parameter $\LipGrad$ and 
variance parameter $\sigma^2 $, we define for each $U\in\Ortho(d,T)$ a
scaled instance
\begin{align}\label{eq:final_scaling}
\Ffinal(x)=\frac{\LipGrad\lambda^{2}}{\ls_1} 
\Frandcomp\left(\frac{x}{\lambda}\right),~\text{where}~\lambda=\frac{\ls_1}{L}\cdot
 4\epsilon,~\text{and}~T=\floor*{
\frac{\Delta}{\Delta_0 (\LipGrad \lambda^2 /\ell_1)}} =
\floor*{
\frac{\LipGrad\Delta}{\lip{1} \Delta_0
	(4\epsilon)^{2}}}.
\end{align}
We assume  $T\ge 4$, or equivalently  
$\epsilon \le  \sqrt{\frac{L\Delta}{64\ls_1\Delta_0 
}}$. 
Let $\gscaled{}(x,z)$ denote the corresponding scaled version of the stochastic
gradient function $\grandcomp$.  Now, by \pref{lem:compression}, we 
have that
$\Ffinal\in\Fclass$ and moreover,
\begin{align*}
\E \norm{\gfinal(x,z) - \grad \Ffinal(x)}^2 &= 
\left(\frac{L\lambda}{\lip{1}}\right)^2
\E \left\|{\grandcomp\left(\frac{x}{\lambda},z\right) - \grad 
	\Frandcomp\left(\frac{x}{\lambda}\right)}\right\|^2 \le 
	\frac{(4\vsigma\epsilon)^2}{p}.
\end{align*}
Therefore, setting $p=\min \left\{ {(4\vsigma\epsilon)^2}/{\sigma^2},1\right\}$ 
guarantees a variance bound of 
$\sigma^{2}$.

Next, Let $\oracle$ be an oracle for which
$\oracle_{\Ffinal}(x,z)=(\Ffinal(x),\gfinal(x,z))$ for all $U\in\Ortho(d,T)$.  
Observe that for
any $\alg\in\AlgRand(K)$, we may regard the sequence
$\crl[\big]{x\ind{i,k}_{\alg[\oracle_{\Ffinal}]}/\lambda}$ as queries an 
algorithm 
$\alg'\in\AlgRand(K)$ interacting with the unscaled oracle 
$\oracle_{\Frandcomp}(x,z)=(\Frandcomp(x),\grandcomp(x,z))$. 
Instantiating \pref{lem:randomized_unbounded} for $\delta=\half$, we 
have that w.p.\ at least $\half$, $\min_{k\in\brk*{K}}\norm[\big]{ \grad 
\Frandcomp\prn[\big]{
		\frac{1}{\lambda} x\ind{t,k}_{\alg\brk{\oracle_{\Ffinal}}}
} }>\half
$ for all $t \leq{} 
\frac{T-2}{2p}$. Therefore, 
\begin{align}
\label{eq:final_grad_bound}
\E\min_{k\in\brk*{K}}\norm[\big]{ \grad \Ffinal\prn[\big]{
		x\ind{t,k}_{\alg\brk{\oracle_{\Ffinal}}}
} }
= \frac{\LipGrad\lambda}{\lip{1}}\cdot\E\min_{k\in\brk*{K}}\norm[\big]{ 
\grad \Frandcomp\prn[\big]{\tfrac{1}{\lambda}
			x\ind{t,k}_{\alg\brk{\oracle_{\Ffinal}}}
} }
\ge \frac{L\lambda}{4\ell_1}=\eps,
\end{align}
by which it follows that 
\begin{align*}
\minimaxRand
>
\frac{T-2}{2p}
=
\left(\left\lfloor \frac{L\Delta}{16\lip{1}\Delta_0 
	\epsilon^{2}}\right\rfloor -2\right)
\frac{1}{2p}
\ge 
\frac{1}{2^7 \lip{1}\Delta_0} \cdot \frac{L\Delta}{p \epsilon^2}
\ge 
\frac{1}{2^{11} \lip{1}\Delta_0\varsigma} \cdot 
\frac{L\Delta\sigma^2}{\epsilon^4},
\end{align*}
where the second inequality uses that $\floor{x}-2\geq{}x/4$ whenever 
$x\geq{}4$.

\paragraph{Mean-squared smooth setting.}
We use the scaling~\pref{eq:final_scaling}, choose $p=\min \left\{ 
{(4\vsigma\epsilon)^2}/{\sigma^2},1\right\}$  as above, and let
\begin{equation*}
L = \frac{\lip{1}}{\lipBar{1}} \LipGradBar \sqrt{p} = 
\frac{\lip{1}}{\lipBar{1}}\min\crl*{\frac{4\varsigma\veps}{\sigma},1}\LipGradBar
\le \LipGradBar.
\end{equation*}
Using \pref{lem:compression} and the calculation from the proof of 
\pref{thm:mean_smooth_zero_respecting}, this setting guarantees that 
$\oracle_{\Ffinal}(x,z)$ is in the class $\OclassMSS$. Consequently, the 
inequality~\pref{eq:final_grad_bound} implies the lower bound
 \begin{equation*}
\minimaxRandMSS
>
\frac{T-2}{2p}
=\left(\left\lfloor 
\frac{\LipGradBar\Delta\sqrt{p}}{16\lipBar{1}\Delta_0\epsilon^2}\right\rfloor
-1\right)
\frac{1}{2p}.
\end{equation*}
When  $\frac{\LipGradBar\Delta\sqrt{p}}{16\lipBar{1}\Delta_0\epsilon^2}\ge 
4$, we have $T\ge 4$ and \pref{eq:final_mss_lb} along with 
$\floor{x}-2\geq{}x/4$ for $x\ge4$ gives
 \begin{equation}\label{eq:final_mss_lb}
\minimaxRandMSS \ge 
\frac{\LipGradBar\Delta}{2^7\lipBar{1}\Delta_0\epsilon^2\sqrt{p}}\ge 
\frac{1}{2^9\lipBar{1}\Delta_0 \varsigma} \cdot \frac{\LipGradBar \Delta 
	\sigma}{\epsilon^3}.
\end{equation}
Moreover, we 
choose $c'$ so that $\epsilon \le \sqrt{\frac{\LipGradBar 
\Delta}{64\lipBar{1}\Delta_0}}\le \sqrt{\frac{\LipGradBar\Delta}{8}}$ holds. 
\pref{lem:stat_lower_bound_global} then gives the lower bound
\begin{equation}
\label{eq:fina_stat_lb}
\minimaxRandMSS
> c_0\cdot\frac{\sigma^{2}}{\eps^{2}},
\end{equation}
for a universal constant $c_0$. 
Together, the bounds \pref{eq:final_mss_lb} and \pref{eq:fina_stat_lb} 
imply the desired result when 
$\frac{\LipGradBar\Delta\sqrt{p}}{16\lipBar{1}\Delta_0\epsilon^2}\ge 4$. 
As we argue in the proof of \pref{thm:mean_smooth_zero_respecting}, in 
the complementary case 
$\frac{\LipGradBar\Delta\sqrt{p}}{16\lipBar{1}\Delta_0\epsilon^2}< 4$, the 
bound \pref{eq:fina_stat_lb} dominates \pref{eq:final_mss_lb}, and 
consequently the result holds there as well.

 \section{Proofs from \pref{sec:extensions}}
\label{app:extensions}
\subsection{Statistical learning oracles}

To prove the mean-squared smoothness properties of the 
construction~\eqref{eq:f_SAA} we must first argue about the continuity of 
$\grad \softindfunc_i$, where $\softindfunc_i:\R^T \to \R$ is the ``soft 
indicator'' 
function given by
\begin{equation*}
\softindfunc_{i}(x) \defeq 
\threshfunc\prn*{1-\prn*{\sum_{k=i}^T\threshfunc^2(|x_k|)}^{1/2}}
= \threshfunc\prn*{1-\norm*{\threshfunc\prn*{|x_{\ge 
				i}|}}}.
\end{equation*}

\begin{lemma}
\label{lem:theta_properties}
For all $i\geq{}j$, $\grad_i\Theta_j(x)$ is well-defined with
  \begin{equation}
\label{eq:grad_theta}
\grad{}_{i}\Theta_j(x) =
\begin{cases}
-\Gamma'(1-\nrm*{\Gamma(\abs*{x_{\geq{}j}})}) \cdot 
\frac{\Gamma(\abs*{x_i})}{
	\nrm*{\Gamma(\abs*{x_{\geq{}j}})}}\cdot{}\Gamma'(\abs*{x_i}) \cdot 
	\sgn(x_i),
& i \ge j ~\mbox{and}~\nrm{\Gamma(\abs{x_{\geq{}j}})}>0, \\
0, & \mbox{otherwise.}
\end{cases}
\end{equation}
Moreover, $\Theta_j$ satisfies the following properties:
\begin{enumerate}
\item
  $\nrm*{\grad{}\Theta_j(x)}\leq{}6^{2}$. \label{item:theta_lipschitz}
  \item 
  $\nrm*{\grad\Theta_j(x)-\grad\Theta_j(y)}\leq{}10^4\cdot\nrm*{x-y}$.\label{item:theta_smooth}
\end{enumerate}
\end{lemma}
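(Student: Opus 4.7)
My plan is to first derive the formula \eqref{eq:grad_theta} by direct differentiation, then establish the uniform norm bound by squaring and summing, and finally prove the Lipschitz bound via a careful reparametrization of $\nabla\Theta_j$ into three scalar ingredients.

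For part~1, I would apply the chain rule to the composition $\Theta_j(x) = \Gamma\prn*{1 - \|\Gamma(|x_{\ge j}|)\|}$. When $\|\Gamma(|x_{\ge j}|)\| > 0$, differentiation is routine once one observes that $t\mapsto \Gamma(|t|)$ is globally $C^\infty$: the nonsmoothness of $|\cdot|$ at the origin is neutralized by $\Gamma$ vanishing on a neighborhood of zero (\pref{obs:thresh}.\ref{item:thresh-0}). When $\|\Gamma(|x_{\ge j}|)\| = 0$, we have $|x_k|\le 1/4$ for all $k\ge j$, so locally $\Theta_j\equiv \Gamma(1) = 1$ by \pref{obs:thresh}.\ref{item:thresh-1} and the gradient vanishes. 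For part~2, substituting \eqref{eq:grad_theta} gives $\|\nabla\Theta_j(x)\|^2 = \Gamma'(h(x))^2 \sum_{i\ge j} \Gamma'(|x_i|)^2\, \Gamma(|x_i|)^2/\|\Gamma(|x_{\ge j}|)\|^2$, so bounding $\Gamma'(|x_i|)^2,\Gamma'(h)^2 \le 36$ and using that the ratios $\Gamma(|x_i|)^2/\|\Gamma(|x_{\ge j}|)\|^2$ sum to exactly $1$ over $i\ge j$ yields the bound $6^2\cdot 6^2 = (6^2)^2$.

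For part~3, the central reparametrization is
\[
\nabla_i \Theta_j(x) = -\widetilde{G}(r(x))\,\psi'(x_i)\,\indicator{i\ge j}, \quad r(x) \defeq \|\Gamma(|x_{\ge j}|)\|,~ \widetilde{G}(r) \defeq \Gamma'(1-r)/r,~ \psi(t)\defeq \Gamma(|t|)^2/2,
\]
for which I would establish the following auxiliary bounds: (i) $r$ is $6$-Lipschitz, by a calculation mirroring part~2 since $\nabla r = \nabla\psi_{\mathrm{sum}}/r$; (ii) $\widetilde{G}$ is supported on $[1/2,3/4]$, bounded by $12$ there, and $280$-Lipschitz, all by direct calculation from $|\Gamma'|\le 6$ and $|\Gamma''|\le 128$; (iii) $\psi'(t)=\Gamma(|t|)\Gamma'(|t|)\sgn(t)$ satisfies $|\psi'|\le 6$ and $|\psi''|\le (\Gamma')^2 + \Gamma\,\Gamma''\le 164$. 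Given these, I would apply the vector-valued decomposition
\[
\widetilde{G}(r(x))\psi'(x) - \widetilde{G}(r(y))\psi'(y) = \widetilde{G}(r(x))\prn*{\psi'(x)-\psi'(y)} + \prn*{\widetilde{G}(r(x))-\widetilde{G}(r(y))}\psi'(y),
\]
followed by the triangle inequality in $\bbR^T$, which yields $\|\nabla\Theta_j(x)-\nabla\Theta_j(y)\| \le |\widetilde G(r(x))|\cdot\|\psi'(x)-\psi'(y)\| + |\widetilde G(r(x))-\widetilde G(r(y))|\cdot\|\psi'(y)\|$.

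The main obstacle is that the useful estimate $\|\psi'(y)\|^2 \le 36\,r(y)^2 \le 36\cdot(3/4)^2$ is valid only when $r(y)\le 3/4$, i.e.\ on the support of $\widetilde G\circ r$. The proof therefore requires case analysis on whether $\widetilde G(r(x))$ and $\widetilde G(r(y))$ vanish. The symmetric cases (both zero, both nonzero) are immediate from the decomposition above. In the asymmetric case where, say, $r(y)>3/4$ and $r(x)\le 3/4$, one has $\widetilde G(r(y))=0$, so the difference collapses to $\widetilde G(r(x))\psi'(x)$; here the crucial trick is to exploit that $\widetilde G$ vanishes at the support boundary, giving $|\widetilde G(r(x))| = |\widetilde G(r(x))-\widetilde G(3/4)| \le 280\,|r(x)-3/4|\le 280\cdot|r(x)-r(y)|\le 280\cdot 6\|x-y\|$. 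Assembling all cases produces the dimension-independent bound $\|\nabla\Theta_j(x)-\nabla\Theta_j(y)\| \le 12\cdot 164\,\|x-y\| + 280\cdot 6\cdot 6\cdot(3/4)\,\|x-y\| \le 10^4\,\|x-y\|$ as claimed.
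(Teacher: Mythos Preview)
Your proposal is correct. For the formula and part~1 you follow the paper exactly. For part~2 (the Lipschitz gradient bound) both arguments rest on the same ingredients---the $164$-Lipschitz bound on $t\mapsto\Gamma(|t|)\Gamma'(|t|)\sgn(t)$ (your $\psi'$, the paper's $\mu$), the $6$-Lipschitz bound on $r(x)=\|\Gamma(|x_{\ge j}|)\|$, and $|\Gamma''|\le 128$---but they are organized differently. The paper keeps the factorization $\nabla\Theta_j(x)=-\Gamma'(1-r(x))\cdot\mu(x)/r(x)$, orders $0<r(x)\le r(y)$ without loss of generality, and kills the $1/r$ singularity via the observation $\Gamma'(1-r)\le 128\,r$ (since $\Gamma'(1)=0$). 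You instead absorb the $1/r$ into $\widetilde G(r)=\Gamma'(1-r)/r$, observe that $\widetilde G$ is compactly supported on $[1/2,3/4]$ with uniform bounds there, and handle the boundary by case analysis using $\widetilde G(1/2)=\widetilde G(3/4)=0$. Your route makes the dimension-independence of $\|\psi'\|$ on the support more explicit and cleanly handles the degenerate case $r(x)=0$, which the paper leaves implicit; the paper's route avoids the case split at the cost of the WLOG ordering. The resulting constants are comparable and both fit under $10^4$.
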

\begin{proof}[\pfref{lem:theta_properties}]
First, we verify that the function
$x_i\mapsto{}\nrm*{\Gamma(\abs*{x_{\geq{}j}})}$ is differentiable
everywhere for each $i$. From here it follows from \pref{obs:thresh} that
$\Theta_j(x)$ is differentiable, and \pref{eq:grad_theta} follows from
the chain rule. Let $i\geq{}j$, and let
$a=\sqrt{\sum_{k\geq{}j,k\neq{}i}\Gamma^{2}(\abs*{x_k})}$. Then
$\nrm*{\Gamma(\abs*{x_{\geq{}j}})}=\sqrt{a^{2}+\Gamma^{2}\prn*{\abs*{x_i}}}$. This
function is clearly differentiable with respect to $x_i$ when $a>0$, and when $a=0$ it is
equal to $\Gamma(\abs*{x_i})$, which is also differentiable.

Property \pref{item:theta_lipschitz} follows because for all $j$,
  \begin{equation}
    \label{eq:grad_theta_norm}
    \nrm*{\grad{}\Theta_j(x)} \leq{}
    \frac{6}{\nrm*{\Gamma(\abs*{x_{\geq{}j}})}}\cdot\sqrt{\textstyle\sum_{i\geq{}j}\prn*{\Gamma(\abs*{x_i})\Gamma'(\abs*{x_i})}^{2}}\leq{}6^{2},
  \end{equation}
where we have used \pref{obs:thresh}.\pref{item:thresh-lip}.

To prove Property \pref{item:theta_smooth}, we restrict to the case $j=1$ so that
$x_{\geq{}j}=x$ and subsequently drop the `$\geq{}j$' subscript to
simplify notation; the case  $j>1$  follows as an immediate
consequence. Define $\mu(x)\in\bbR^{T}$ via $\mu_{i}(x) =
\Gamma(\abs*{x_i})\Gamma'(\abs*{x_i})\sgn(x_i)$. Assume without loss
of generality that
$0<\nrm*{\Gamma(\abs*{x})}\leq{}\nrm*{\Gamma(\abs*{y})}$. By triangle inequality, we have
\begin{align*}
\nrm*{\grad\Theta_1(x)-\grad\Theta_1(y)}&\leq{}
\abs*{\Gamma'(1-\nrm*{\Gamma(\abs*{x})})-\Gamma'(1-\nrm*{\Gamma(\abs*{y})})}\cdot\frac{\nrm*{\mu(x)}}{\nrm*{\Gamma(\abs*{x})}}\\
&~~~~+\Gamma'(1-\nrm*{\Gamma(\abs*{x})})\cdot\nrm*{\frac{\mu(x)}{\nrm*{\Gamma(\abs*{x})}}-\frac{\mu(y)}{\nrm*{\Gamma(\abs*{y})}}}.
\end{align*}
To proceed, we state some useful facts, all of which follow
from \pref{obs:thresh}.\ref{item:thresh-lip}:
\begin{enumerate}
\item $\Gamma$ is 6-Lipschitz.
\item $\Gamma'$ is 128-Lipschitz, and in particular
  $\Gamma'(1-\nrm*{\Gamma(\abs*{x})})\leq{}128\cdot
  \nrm*{\Gamma(\abs*{x})}$ (since $\Gamma'(1)=0$).
\item $\nrm*{\mu(x)}\leq{}6\cdot\nrm*{\Gamma(\abs*{x})}$ for all $x$.
\item $\nrm*{\mu(x)-\mu(y)}\leq{}(128\cdot 1 + 
6^2)\cdot\nrm*{x-y}=164\cdot\nrm*{x-y}$ for all $x,y$.
\end{enumerate}
Using the first, second, and third facts, we bound the first term as
\begin{align*}
  \frac{\nrm*{\mu(x)}}{\nrm*{\Gamma(\abs*{x})}}\cdot\abs*{
  	\Gamma'(1-\nrm*{\Gamma(\abs*{x})})-\Gamma'(1-\nrm*{\Gamma(\abs*{y})})}
&\leq{} 6\,
  \abs*{\Gamma'(1-\nrm*{\Gamma(\abs*{x})})-\Gamma'(1-\nrm*{\Gamma(\abs*{y})})}\\
&\leq{} 128\cdot 6 \,
  \abs*{\nrm*{\Gamma(\abs*{x})}-\nrm*{\Gamma(\abs*{y})}}\\
&\leq{} 128\cdot6^{2} \,\abs*{\nrm*{x}-\nrm*{y}}\\
&\leq{} 5000 \, \nrm*{x-y}.
\end{align*}
For the second term, we apply the second fact and the triangle inequality
to upper bound by 
\begin{align*}
&\Gamma'(1-\nrm*{\Gamma(\abs*{x})})\cdot\nrm*{\frac{\mu(x)}{\nrm*{\Gamma(\abs*{x})}}-\frac{\mu(y)}{\nrm*{\Gamma(\abs*{y})}}}\\
&\leq{}
  128\nrm*{\Gamma(\abs*{x})}\cdot\nrm*{\frac{\mu(x)}{\nrm*{\Gamma(\abs*{x})}}-\frac{\mu(y)}{\nrm*{\Gamma(\abs*{y})}}}\\
&\leq{}
  128\frac{\nrm*{\Gamma(\abs*{x})}}{\nrm*{\Gamma(\abs*{y})}}\cdot\nrm*{\mu(x)-\mu(y)}
+ 
128\nrm*{\Gamma(\abs*{x})}\nrm*{\mu(x)}\cdot\abs*{\frac{1}{\nrm*{\Gamma(\abs*{x})}}-\frac{1}{\nrm*{\Gamma(\abs*{y})}}}.
\end{align*}
Using the fourth fact and the assumption that
$\nrm*{\Gamma(\abs*{x})}\leq{}\nrm*{\Gamma(\abs*{y})}$, we have
\[
\frac{\nrm*{\Gamma(\abs*{x})}}{\nrm*{\Gamma(\abs*{y})}}
\cdot\nrm*{\mu(x)-\mu(y)}\leq{}164\nrm*{x-y}.
\]
Using the third fact and 
$\nrm*{\Gamma(\abs*{x})}\leq{}\nrm*{\Gamma(\abs*{y})}$, we have
\begin{flalign*}
\nrm*{\Gamma(\abs*{x})}\nrm*{\mu(x)}\cdot
\abs*{\frac{1}{\nrm*{\Gamma(\abs*{x})}}-\frac{1}{\nrm*{\Gamma(\abs*{y})}}}
& \leq{} 
6\nrm*{\Gamma(\abs*{x})}^{2}\cdot\abs*{\frac{1}{\nrm*{\Gamma(\abs*{x})}}-\frac{1}{\nrm*{\Gamma(\abs*{y})}}}
\\ &
=  6\frac{\nrm*{\Gamma(\abs*{x})}}{\nrm*{\Gamma(\abs*{y})}}\cdot
\abs*{{\nrm*{\Gamma(\abs*{x})}}{\nrm*{\Gamma(\abs*{y})}}}
\le 6^2 \norm{x-y}.
\end{flalign*}
Gathering all of the constants, this establishes that
\[
\nrm*{\grad\Theta_1(x)-\grad\Theta_1(y)}\leq{}10^4\cdot\nrm*{x-y}.
\]
  
\end{proof}
We are now ready to prove \pref{lem:pzc-saa}. For ease of reference, we 
restate the construction~\pref{eq:f_SAA}:
\begin{equation*}
\funscaled(x,z) =
-\Psi(1)\Phi(x_1)\noisingfunc_1(x,z) +
\sum_{i=2}^{T}\brk*{\Psi(-x_{i-1})\Phi(-x_i) -
	\Psi(x_{i-1})\Phi(x_i)}\,\noisingfunc_i(x,z),
\end{equation*}
where
\begin{equation*}
\noisingfunc_i(x,z) \defeq 1 + \softindfunc_{i}(x) \prn*{\frac{z}{p}-1}.
\end{equation*}

\newcommand{\progh}{\prog[\frac{1}{2}]}
\begin{proof}[\pfref{lem:pzc-saa}]
To begin, we introduce some shorthand. Define
\newcommand{\hone}{h_1}
\newcommand{\htwo}{h_2}
\begin{align*}
&H(s,t) = \Psi(-s)\Phi(-t)-\Psi(s)\Phi(t),\\
&\hone(s,t) = \Psi(-s)\Phi'(-t) +\Psi(s)\Phi'(t),\\
&\htwo(s,t) = \Psi'(-s)\Phi(-t) +\Psi'(s)\Phi(t).
\end{align*}
The gradient of the noiseless hard function $\Funscaled$ can then be written as
\begin{equation*}
\grad_i \Funscaled(x) = -\hone(x_{i-1},x_i) - \htwo(x_i, x_{i+1}).
\end{equation*}
Next, define 
\begin{align}
  g_i(x,z) = -\hone(x_{i-1},x_i)\cdot{}\nu_{i}(x,z)
  -\htwo(x_{i},x_{i+1})\cdot{}\nu_{i+1}(x,z).
\label{eq:saa_g}
\end{align}
With these definitions, we have the expression
\begin{align}
\grad{}_i\funscaled(x,z) = g_i(x,z)
  +\prn*{\frac{z}{p}-1}\sum_{j=1}^{i}H(x_{j-1},x_j)\cdot\grad{}_i\Theta_j(x).
\label{eq:saa_grad}
\end{align}
We first prove that $\grad\funscaled$ is a probability-$p$ zero
chain. Since $\En\brk*{\nu_i(x,z)}=1$ for all $i$ and
$\En\prn{\tfrac{z}{p}-1}=1$, it follows immediately from
\pref{eq:saa_grad} that
$\En\brk*{\grad\funscaled(x,z)}=\grad{}F(x)$. Now, let $x$ be fixed
and let $i>\prog(x)+1$. We claim that
$\brk*{\grad{}\funscaled(x,z)}_i=0$ with probability $1$. Since
$\abs*{x_{i-1}},\abs*{x_i}<1/4$, it follows from
\pref{eq:saa_g} that $g_i(x,z)=0$ and from \pref{eq:grad_theta} that
$\grad{}_i\Theta_j(x)=0$ for all $j$. This establishes that
$\brk*{\grad{}\funscaled(x,z)}_i=\grad{}_i\Funscaled(x)=0$ for all
$z\in\crl*{0,1}$. Now, consider the case $i=\prog(x)+1$. Here (since 
$\abs*{x_i}<1/4$) we still
have $\grad{}_i\Theta_j(x)=0$ for all $j$, so
$\grad_i\funscaled(x,z)=g_i(x,z)$. Since
$\Gamma(\abs*{x_{\geq{}i}})=\Gamma(\abs*{x_{\geq{}i+1}})=0$, we have
$\nu_{i}(x,z)=\nu_{i+1}(x,z)=\tfrac{z}{p}$, so
$g_i(x,z)=\grad_i\Funscaled(x)\cdot\tfrac{z}{p}$. It follows immediately
that $\P\prn{\exists{}x:\brk*{\grad\funscaled(x,z)}_{\prog(x)+1}\neq{}0}\leq{}p$.

To bound the variance and mean-squared smoothness of
$\grad\funscaled$, we begin by analyzing the sparsity pattern of the error
vector
\begin{equation*}
\delta(x,z)\ldef{}\grad\funscaled(x,z)-\grad\Funscaled(x,z).
\end{equation*}
Let
$i_x=\progh(x)+1$. Observe that if $j<i_x$, we have
$\nrm*{\Gamma(\abs*{x_{\geq{}j}})}\geq{}\Gamma(\abs*{x_{i_{x}-1}})\geq{}\Gamma(1/2)=1$,
and so $\Gamma'(1-\nrm*{\Gamma(\abs*{x_{\geq{}j}})})=0$ and consequently
$\grad_{i}\Theta_j(x)=0$ for all $i$. Note also that if $j>i_x$, we
have $H(x_{j-1},x_j)=0$. We conclude
that \pref{eq:saa_grad} simplifies to
\begin{equation}
\grad{}_i\funscaled(x,z) = g_i(x,z)
+\prn*{\frac{z}{p}-1}\cdot{}H(x_{i_x-1},x_{i_x})\cdot\grad{}_i\Theta_{i_x}(x).
\label{eq:saa_grad_simple}
\end{equation}
As in \pref{lem:pzc-pair},  we have $\nu_{i}(x,z)=1$ for all $i<i_x$
and $g_i(x,z)=\grad{}_i\Funscaled(x)=0$ for all $i>i_x$. Thus, using
the expression \pref{eq:saa_g} along with \pref{eq:saa_grad_simple},
we have
\begin{align}
  \label{eq:saa_delta}
  \delta_{i}(x,z) =\prn*{\tfrac{z}{p}-1}H(x_{i_x-1},x_{i_x})\cdot\grad{}_i\Theta_{i_x}(x)-\prn*{\tfrac{z}{p}-1}\left\{
  \begin{array}{ll}
    \htwo(x_{i_x-1},x_{i_x})\cdot{}\Theta_{i_x}(x),&\quad{}i=i_x-1,\\
     \hone(x_{i_x-1},x_{i_x})\cdot{}\Theta_{i_x}(x),&\quad{}i=i_x,\\
    0,&\quad{\text{otherwise.}}
  \end{array}
\right.
\end{align}
It follows immediately that the variance can be bounded as
\begin{align*}
\En\nrm*{\grad\funscaled(x,z)-\grad\Funscaled(z)}^{2}
&\leq{}
\frac{2}{p}H(x_{i_x-1},x_{i_x})^2\nrm*{\grad\Theta_{i_x}(x)}^{2}\\
&~~+ \frac{2}{p}h_1^2(x_{i_x-1},x_{i_x})\cdot{}\Theta_{i_x}(x)^{2}
+\frac{2}{p}h_2^2(x_{i_x-1},x_{i_x})\cdot{}\Theta_{i_x}(x)^{2}.
\end{align*}
From \pref{eq:grad_theta_norm} we have
$\nrm*{\grad{}\Theta_{i_x}(x)}\leq{}6^2$, and from
\pref{eq:psi_phi_bounds} we have $|H(x,y)|\leq{}12$, so the first
term contributes at most $\tfrac{2\cdot{}144\cdot{}6^{4}}{p}$. Since 
$|\Theta_i(x)|\leq{}1$, \pref{lem:deterministic-construction}
implies that the second and third term together contribute at most
$\tfrac{4\cdot{}23^{2}}{p}$. To conclude, we may take
\[
\En\nrm*{\grad\funscaled(x,z)-\grad\Funscaled(z)}^{2} \leq{} \frac{\vsigma^{2}}{p},
\]
where $\vsigma\leq{}10^{3}$. 

To bound the mean-squared smoothness $\E \norm{\grad\funscaled(x,z) - \grad\funscaled(y,z) }^2$,
we first use that $\E\brk*{\delta(x ,z)}=0$, which implies
\[
\E \norm{\grad\funscaled(x,z) -  \grad\funscaled (y,z) }^2 =\E 
\norm{\delta(x,z) - \delta(y,z) }^2 + 
    \norm{\Funscaled(x)-\Funscaled(y)}^2.
\]
We have $\norm{\grad \Funscaled(x) - \grad
    \Funscaled(y)} \le \lip{1} \norm{x-y}$ by \pref{lem:deterministic-construction}.\ref{item:lip}. For the
  other term, we use the sparsity pattern of
  $\delta(x,z)$ established in \pref{eq:saa_delta} along with the fact
  that $\En\prn*{\tfrac{z}{p}-1}^{2}\leq\tfrac{1}{p}$ to show
  \begin{align*}
\En\nrm*{\delta(x,z)-\delta(y,z)}^{2}    &\leq{} 
\frac{3}{p}\underbrace{\sum_{i\in\{i_x, i_y\}} 
\prn*{h_1(x_{i-1},x_{i})\cdot{}\Theta_{i}(x) 
-h_1(y_{i-1},y_{i})\cdot{}\Theta_{i}(y)}^{2}}_{\rdef{}\cE_1}\\
 &~~+  \frac{3}{p}\underbrace{\sum_{i\in\{i_x, i_y\}} 
 \prn*{h_2(x_{i-1},x_{i})\cdot{}\Theta_{i}(x) 
 -h_2(y_{i-1},y_{i})\cdot{}\Theta_{i}(y)}^{2}}_{\rdef{}\cE_2}\\
 &~~+ \frac{3}{p}\underbrace{\sum_{i=1}^{T} \prn*{H(x_{i_x-1},x_{i_x})\cdot
 		\grad{}_i\Theta_{i_x}(x)-H(y_{i_y-1},y_{i_y})
 		\cdot\grad{}_i\Theta_{i_y}(y)}^2}_{\rdef{}\cE_3},
  \end{align*}
  where $i_y=\prog[\half](y)+1$. %

We bound $\cE_1$ and $\cE_2$ using similar arguments to
\pref{lem:pzc-pair}. Focusing on $\cE_1$, and letting
$i\in\crl*{i_x,i_y}$ be fixed, we have
\begin{align*}
&\prn*{h_1(x_{i-1},x_{i})\cdot{}\Theta_{i}(x)
  -h_1(y_{i-1},y_{i})\cdot{}\Theta_{i}(y)}^{2}\\
&\leq
2\prn*{h_1(x_{i-1},x_{i}) -h_1(y_{i-1},y_{i})}^{2}\Theta_i(x)^{2}
+ 2\prn*{\Theta_{i}(x)  -\Theta_{i}(y)}^{2}h_1(y_{i-1},y_{i})^2.
\end{align*}
Note that by~\pref{lem:theta_properties}, (i) $\Theta_i$ is $6^2$ Lipschitz 
and $\Theta_i\leq{}1$ and
(ii) $h_1$ is $23$-Lipschitz and $\abs*{h_1}\leq{}5$ (from
\pref{obs:psi_phi_bounds} and 
\pref{lem:deterministic-construction}). Consequently,
\[
\cE_1\leq{}2\cdot{}10^{5}\cdot\nrm*{x-y}^{2}.
\]
Since $h_{2}$ is $23$-Lipschitz and has $\abs*{h_2}\leq{}20$, an
identical argument also yields that
\[
\cE_2\leq{}5\cdot{}10^{6}\cdot\nrm*{x-y}^{2}.
\]

To bound $\cE_3$, we use the earlier observation that for all $i$ and $j\ne 
i_x$ we have
$H(x_{j-1},x_j)\grad_i\Theta_j(x)=0$, and likewise that
$H(y_{j-1},y_j)\grad_i\Theta_j(y)=0$ for all $j\neq{}i_y$. This allows
us to write
\begin{align*}
\cE_3&=\sum_{i=1}^{T}
       \prn*{\sum_{j\in\crl*{i_x,i_y}}H(x_{j-1},x_{j})\cdot\grad{}_i\Theta_{j}(x)-H(y_{j-1},y_{j})\cdot\grad{}_i\Theta_{j}(y)}^2\\
&\leq{}2\sum_{j\in\crl*{i_x,i_y}}\sum_{i=1}^{T} \prn*{H(x_{j-1},x_{j})\cdot\grad{}_i\Theta_{j}(x)-H(y_{j-1},y_{j})\cdot\grad{}_i\Theta_{j}(y)}^2.
\end{align*}
Letting $j\in\crl*{i_x,i_y}$ be fixed, we upper bound the inner
summation as
\begin{align*}
  &\sum_{i=1}^{T}\prn*{H(x_{j-1},x_{j})\cdot\grad{}_i\Theta_{j}(x)-H(y_{j-1},y_{j})\cdot\grad{}_i\Theta_{j}(y)}^2\\
  &\leq
  2\sum_{i=1}^{T}\prn*{H(x_{j-1},x_{j})\cdot(\grad{}_i\Theta_{j}(x)-\grad{}_i\Theta_{j}(y))}^2+
    \prn*{(H(x_{j-1},x_{j})-H(y_{j-1},y_{j}))\cdot\grad{}_i\Theta_{j}(y)}^2\\
&= 2H(x_{j-1},x_j)^{2}\nrm*{\grad{}\Theta_j(x)-\grad\Theta_j(y)}^{2} + 2(H(x_{j-1},x_j)-H(y_{j-1},y_{j}))^{2}\nrm*{\grad\Theta_j(y)}^{2}.
\end{align*}
We may now upper bound this quantity by applying the following basic
results: 
\begin{enumerate}
\item $H(x_{j-1},x_j)\leq{}12$ by \pref{eq:psi_phi_bounds}.
\item $\abs*{H(x_{j-1},x_j)-H(y_{j-1},y_{j})}\leq{}20\nrm*{x-y}$, by \pref{eq:psi_phi_bounds}.
\item $\nrm*{\grad\Theta_j(y)}\leq{}6^{2}$ by
  \pref{lem:theta_properties}.\pref{item:theta_lipschitz}.
\item
  $\nrm*{\grad\Theta_j(x)-\grad\Theta_j(y)}\leq{}10^4\cdot\nrm*{x-y}$,
  by \pref{lem:theta_properties}.\pref{item:theta_smooth}.
\end{enumerate}
It follows that
$\cE_3\leq{}3\cdot{}10^{10}\cdot{}\nrm*{x-y}^{2}$. Collecting the
bounds on $\cE_1$, $\cE_2$, and $\cE_3$, 
this establishes that
\[
\En\norm{\grad\funscaled(x,z) - \grad\funscaled(y,z) }^2\leq{} \frac{\lipBar{1}^{2}}{p}\cdot\nrm*{x-y}^{2}.
\]
with $\lipBar{1} \leq{} \sqrt{10^{11} + \lip{1}^2}$.
\end{proof}

\subsection{Active oracles}\label{app:active}
\begin{proof}[\pfref{lem:prob-zero-chain-active}]
	Adopting the notation of  the proof of \pref{lem:prob-zero-chain} 
	(with $K=1$), we see that the equality $\P(\gamma\ind{t} - 
	\gamma\ind{t-1} \notin 
	\{0,1\}\mid \cG\ind{t-1})=0$ holds for our setting as well. Moreover, we 
	claim that
	\begin{equation}\label{eq:active-claim}
	\P(\gamma\ind{t} - \gamma\ind{t-1} =1\mid \cG\ind{t-1})
	\le 
	2p. %
	\end{equation}
	Given the bound~\pref{eq:active-claim}, the remainder of the proof is 
	identical to that of \pref{lem:prob-zero-chain}, with $2p$ replacing $p$.
	To see why~\pref{eq:active-claim} holds, let 
	$(x\ind{1},i\ind{1}),\ldots,(x\ind{t},i\ind{t})\in\cG\ind{t-1}$ 
	denote the sequence of queries made by the algorithm. 
	We first observe that, by the construction of $g_\pi$, we have 
	$\gamma\ind{t} = 1+ 
	\gamma\ind{t-1}$ only if 
	$\zeta_{1+\gamma\ind{t-1}}(\pi(i\ind{t}))=1$. Therefore,
	\begin{equation}\label{eq:active-first-obs}
	\P(\gamma\ind{t} - \gamma\ind{t-1} =1\mid \cG\ind{t-1}) 
	\le 
	\P\prn[\big]{\zeta_{1+\gamma\ind{t-1}}(\pi(i\ind{t}))=1\mid 
		\cG\ind{t-1}}.
	\end{equation}
	
	Next, let $b\in\{0,1\}^{N^T}$ denote a (random) vector whose $i$th entry 
	is $b_i \defeq \zeta_{1+\gamma\ind{t-1}}(\pi(i))$. The vector $b$ has 
	$N^{T-1}$ elements equal to 1 and its distribution is permutation 
	invariant. Note that, by construction, the vector $b$ is independent of 
	$\{\zeta_j(\pi(i))\}_{j\ne 1+\gamma\ind{t-1},i\in N^T}$. Consequently, 
	the gradient estimates $g\ind{1},\ldots,g\ind{t-1}$ depend on $b$ only 
	through their $(1+\gamma\ind{t-1})$th coordinate, which for iterate 
	$t'\le t-1$ is
	\begin{equation*}
	g_{1+\gamma\ind{t-1}}\ind{t'} = 
	\brk*{\grad_{1+\gamma\ind{t-1}}\Funscaled(x\ind{t'})} b_{i\ind{t'}}.
	\end{equation*}
	From this expression we see that $g\ind{t'}$ depends on $b$ only for 
	index queries in the set
	\begin{equation*}
	S\ind{t-1}\defeq \{i\ind{t'}\mid t'<t 
	~~\mbox{and}~~\grad_{1+\gamma\ind{t-1}} 
	\Funscaled(x\ind{t'}) \ne 0\}\in\cG\ind{t-1}.
	\end{equation*}
	Moreover, for every $i\in S\ind{t-1}$ we have that $b_i=0$, 
	because otherwise there exists $t'<t$ such that 
	$g_{1+\gamma\ind{t-1}}\ind{t'} \ne 0$ which gives 
	the contradiction $\gamma\ind{t-1}\ge \gamma\ind{t'}\ge 
	\prog[0](g\ind{t'})\ge 1+\gamma\ind{t-1}>\gamma\ind{t-1}$. In 
	conclusion, we have for every $i\in N^T$
	\begin{equation}\label{eq:active-second-obs}
	\P\prn[\big]{\zeta_{1+\gamma\ind{t-1}}(\pi(i))=1\mid 
		\cG\ind{t-1}}
	=
	\P\prn[\big]{b_i = 1 \mid b_{j}=0~\forall j\in S\ind{t-1}}
	= 
	\begin{cases}
	\frac{N^{T-1}}{N^T - |S\ind{t-1}|} & i \notin S\ind{t-1} \\
	0 & \text{otherwise,} \\
	\end{cases}
	\end{equation}
	where the last equality follows from the permutation invariance of $b$. 
	
	Combining the observations above with the fact that $|S\ind{t-1}| \le t-1 
	\le \frac{T}{4p} \le \frac{1}{4}NT \le \half N^T$ gives the desired 
	result~\pref{eq:active-claim}, since
	\begin{equation*}
	\P(\gamma\ind{t} - \gamma\ind{t-1} =1\mid \cG\ind{t-1}) 
	\stackrel{\text{\pref{eq:active-first-obs}}}{\le} 
	\P\prn[\big]{\zeta_{1+\gamma\ind{t-1}}(\pi(i\ind{t}))=1\mid 
		\cG\ind{t-1}}
	\stackrel{\pref{eq:active-second-obs}}{\le}  
	\frac{N^{T-1}}{N^{T}-t} \le \frac{2}{N} = 
	2p.
	\end{equation*} 

We remark that the argument above depends crucially on using a different bit for every 
coordinate. Indeed, had we instead used the original construction 
$\gunscaledBasic$ in Eq.~\pref{eq:basic-construction} and  
set $g_\pi(x;i)=\gunscaledBasic(\zeta_1(\pi(i)))$, an algorithm that queried 
roughly $N$ random indices would find an index $i^\star$ such that 
$\zeta_1(\pi(i^\star))=1$ and could then continue to query it exclusively, 
achieving a unit of progress at every query. This would decrease the lower 
bound from  $\Omega(T/p)=\Omega(NT)$ to $\Omega(N+T)$.
\end{proof}

\end{document}